\newcounter{example}
\colorlet{texcscolor}{blue!50!black}
\colorlet{texemcolor}{red!70!black}
\colorlet{texpreamble}{red!70!black}
\colorlet{codebackground}{black!25!white!25}
\newcommand{\myNum}[1]{(\emph{#1})}
\newcommand{\sfrac}[2]{%
    {\mathsmaller{\frac{\raisebox{0.05em}{\footnotesize $#1$}}{\raisebox{-0.15em}{\footnotesize $#2$}}}}
}
\setlist{nolistsep}
\newcommand{\R}{\mathbb{R}} % Reals
\newcommand{\cN}{{\cal N}}
\newcommand{\cX}{{\cal X}}
\newcommand{\eqdef}{\overset{\text{def}}{=}} 
\DeclareMathOperator{\prox}{prox}       % proximal operator      
\DeclareMathOperator{\Arg}{Arg}         % Argument
\setlist{nolistsep}
\newcommand{\pa}[1]{(#1)}
\newcommand{\Pa}[1]{\big({#1}\big)}
\newcommand{\ie}{\textit{i.e.}~}
\newcommand{\eg}{\textit{e.g.}~}
\newcommand{\Id}{\mathrm{Id}}
\newcommand{\program}{\textrm{ProGrAMMe}}
\newcommand{\iprod}[2]{\langle #1,\,#2 \rangle}
\renewcommand{\cX}{\mathring{X}}
\newcommand{\KL}{Kurdyka-{\L}ojasiewicz{~}}
\newenvironment{@abssec}[1]{%
\vspace{.05in}\small
   \parindent .2in
     {\upshape\bfseries #1. }\ignorespaces
}
{\par\vspace{.025in}}
\renewenvironment{abstract}{\begin{@abssec}{\abstractname}}{\end{@abssec}}
\newenvironment{keywords}{\begin{@abssec}{\keywordsname}}{\end{@abssec}}
\newenvironment{AMS}{\begin{@abssec}{\AMSname}}{\end{@abssec}}
\renewcommand\abstractname{{\noindent}Abstract}
\newcommand\keywordsname{{\noindent}Key words}
\newcommand\AMSname{{\noindent}AMS subject classifications}
\def\th@plain{%
  \thm@notefont{}% same as heading font
  \itshape % body font
}
\def\th@definition{%
  \thm@notefont{}% same as heading font
  \normalfont % body font
}
\renewenvironment{proof}[1][\proofname]{%
  \par\pushQED{\qed}\normalfont%
  \topsep6\p@\@plus6\p@\relax
  \trivlist\item[\hskip\labelsep\bfseries#1\@addpunct{.}]%
  \ignorespaces
}{%
  \popQED\endtrivlist\@endpefalse
}
\theoremstyle{plain}% default
\newtheorem{theorem}{Theorem}[section]
\newtheorem{lemma}[theorem]{Lemma}
\newtheorem{proposition}[theorem]{Proposition}
\theoremstyle{definition}
\newtheorem{definition}{Definition}[section]
\newtheorem{remark}[theorem]{Remark}
\newtheorem*{definition*}{Definition}
\titlespacing*{\section}{0pt}{0.9\baselineskip}{0.375\baselineskip}
\titlespacing*{\subsection}{0pt}{0.65\baselineskip}{0.15\baselineskip}
\titlespacing*{\subsubsection}{0pt}{0.5\baselineskip}{0.125\baselineskip}
\titlespacing*{\paragraph}{0pt}{0.25\baselineskip}{0.25\baselineskip}
\begin{document}

\setlength{\abovedisplayskip}{5.5pt}
\setlength{\belowdisplayskip}{5pt}

\title{An Adaptive Rank Continuation Algorithm for General Weighted Low-rank Recovery}
\author{
		Aritra Dutta\footnote{Equal contributions.} \thanks{Department of Mathematics and Computer Science (IMADA), University of Southern Denmark, DK (Email: {ard@sdu.dk}).}  
\and Jingwei Liang\footnotemark[2] \thanks{School of Mathematical Sciences and Institute of Natural Sciences, Shanghai Jiao Tong University, China (Email: {jingwei.liang@sjtu.edu.cn}).}
\and Xin Li\thanks{Department of Mathematics, University of Central Florida, USA (Email: {xin.li@ucf.edu}).}
		}
\date{}
\maketitle

%% ------------------------------------------------------------------
%% ABSTRACT
%% ------------------------------------------------------------------

\begin{abstract}
This paper is devoted to proposing a general weighted low-rank recovery model and designing a fast SVD-free computational scheme to solve it. 
First, our generic weighted low-rank recovery model unifies several existing approaches in the literature.~Moreover, our model readily extends to the non-convex setting.~Algorithm-wise, most first-order proximal algorithms in the literature for low-rank recoveries require computing singular value decomposition (SVD).~As SVD does not scale appropriately with the dimension of the matrices, these algorithms become slower when the problem size becomes larger. By incorporating the variational formulation of the nuclear norm into the sub-problem of proximal gradient descent, we avoid computing SVD, which results in significant speed-up.~Moreover, our algorithm preserves the {\em rank identification property} of nuclear norm \cite{liang2017activity} which further allows us to design a rank continuation scheme that asymptotically achieves the minimal iteration complexity. Numerical experiments on both toy examples and real-world problems, including structure from motion~(SfM) and photometric stereo, background estimation, and matrix completion, demonstrate the superiority of our proposed algorithm.
\end{abstract}

\begin{keywords}
    Low-rank recovery, weighted low-rank, nuclear norm, singular value decomposition, proximal gradient descent, alternating minimization, rank identification/continuation
\end{keywords}

\begin{AMS}
  49J52, 65K05, 65K10, 90C06, 90C30
\end{AMS}

%%%%%%%%%%%%%%%%%%%%%%%%%%%%%%%%%%%%%%%%%%%%%%%%%%%%%%%%%%%%%%%%%%%%

%\include{tex_main}

\section{Introduction}

Low-rank matrix recovery is an important problem to study as it covers many interesting problems arising from diverse fields including machine learning, data science, signal/image processing, and computer vision, to name a few. The goal of low-rank recovery is to recover or approximate the targeted matrix $\cX \in \mathbb{R}^{m\times n}$ whose rank is much smaller than its dimension.~For example, matrix completion \cite{rmc_taoyuan,candes_MC}, structure from motion \cite{RPCA-BL}, video segmentation \cite{godec,grasta,APG}, image processing and signal retrieval \cite{reprocs, wen2019nonconvex} exploit the inherent low-rank structure of the data.

%\jingwei{Probably an example here, the one i'm thinking is phaselift by Candes et al or the Netflix prize...}

For many problems of interests, instead of accessing the data $\cX$ directly, often we can only observe it through some agent (\eg a linear operator) $\Psi$.~A general observation model takes the following form
\begin{equation}
F = \Psi (\cX) + \varepsilon ,
\end{equation}
where $\Psi : \mathbb{R}^{m\times n} \to \mathbb{R}^{d\times \ell}$ is the (observation) operator which is assumed to be bounded linear. For example, in the compressed sensing scenario, $\Psi$ returns a linear measurement of $\cX$ which is a $d$-dimensional vector \cite{reprocs}; for matrix completion $\Psi$ is a binary mask \cite{candes_MC}.~In the above model, variable $\varepsilon \in \mathbb{R}^{d \times \ell}$ denotes additive noise (\eg white Gaussian) and $F \in \mathbb{R}^{d \times \ell}$ is the obtained noise contaminated observation.

Over the years, numerous low-rank recovery models are proposed in the literature, for example \cite{ma2011fixed,RPCAgd,wen2019nonconvex, svdfree_xiao,APG,regularizedlow_das}, to mention a few. 
%\jingwei{Some work on rank constrained recovery, nuclear norm, or just start from the next line...} 
When the rank of $\cX$ is available, one can consider the following rank constrained weighted least square %recovery model
\begin{equation}\label{prblm:wlr_constraint}
\min_{X\in\mathbb{R}^{m\times n}} ~ \sfrac{1}{2}\|\Pa{\Psi (X) - F }\odot W\|^2 \quad \mathrm{such~that}\quad \mathrm{rank}(X) \leq r  , 
\end{equation}
where $r = \mathrm{rank}(\cX)$ is the rank of $\cX$, $W\in\mathbb{R}^{d\times l}$ is a non-negative weight matrix, and $\odot$ is the Hadamard product. 
%\begin{itemize}[leftmargin=1cm]
%\item $\|\cdot\|_{*}$ denotes the nuclear norm (or trace norm). 
%\item $W : \mathbb{R}^{d}$ is a non-negative weight matrix and $\odot$ is the Hadamard product;
%\end{itemize}
The motivation of considering a weight $W$ is such that \eqref{prblm:wlr_constraint} can handle more general noise model $\varepsilon$, rather than mere Gaussian noise \cite{dutta_thesis, duttali_bg,duttalirichtarik_modeling}.~A clear limitation of \eqref{prblm:wlr_constraint} is that, for many problems it is in general impossible to know $\mathrm{rank}(\cX)$ {\em a priori}. As a result, instead of using rank as constraint, one can penalize it to the objective which results in rank regularized recovery model
\begin{equation}\label{prblm:wlr_rank}
\min_{X\in\mathbb{R}^{m\times n}} ~ \sfrac{1}{2}\|\Pa{\Psi (X) - F }\odot W\|^2 + \tau \mathrm{rank}(X) ,
\end{equation}
where $\tau > 0$ is the regularization parameter. 
Though avoids the estimation of $r$, one needs to choose $\tau$ properly.~Moreover, due to {\tt rank} function, \eqref{prblm:wlr_constraint} and \eqref{prblm:wlr_rank} are non-convex, imposing challenges to both theoretical analysis and algorithmic design.

In literature, a popular approach to avoid  non-convexity is to replace the {\tt rank} function with its convex surrogate---the nuclear norm (a.k.a. trace norm) \cite{LinChenMa, caicandesshen}. Correspondingly, we obtain the following nuclear norm constrained form of \eqref{prblm:wlr_constraint}: 
\begin{equation}%\label{prblm:wlr_constraint}
\min_{X\in\mathbb{R}^{m\times n}} ~ \sfrac{1}{2}\|\Pa{\Psi (X) - F }\odot W\|^2 \quad \mathrm{such~that}\quad \|X\|_{*} \leq c , 
\end{equation}
where $c$ is a predefined constant, \eg $c = \|\cX\|_{*}$ if possible.~Consequently, for \eqref{prblm:wlr_rank}, we arrive at the following {\em unconstrained} nuclear norm regularized recovery model
\begin{equation}\label{prblm:wlr_nuclear}
\min_{X\in\mathbb{R}^{m\times n}} ~ \sfrac{1}{2}\|\Pa{\Psi (X) - F } \odot W\|^2 + \tau\|X\|_{*} .
\end{equation}
Note that, besides the weighted $\ell_2$ loss, one can also consider the general loss function $f(X,F,W)$ which gives the following recovery model (similar to \cite{Ji_APG,Shen_anaccelerated})
\begin{equation}\label{prblm:wlr_general_loss}
\min_{X\in\mathbb{R}^{m\times n}} ~ f(X,F,W) + \tau\|X\|_{*} .
\end{equation}
For the rest of the paper, we mainly focus on model \eqref{prblm:wlr_nuclear} and only present a short discussion of \eqref{prblm:wlr_general_loss} in Section~\ref{sec:generlization}.

\subsection{Related work}
Our recovery model \eqref{prblm:wlr_nuclear} is connected with several established work in the literature, and moreover covers some as special cases. Therefore in what follows, we present a short overview of literature study.  

\paragraph{Trace LASSO.}
When the entries of $W$ are all $1$'s, problem \eqref{prblm:wlr_nuclear} becomes the Trace LASSO considered in \cite{grave2011trace}, \ie nuclear norm regularized least square. For this case, $\varepsilon$ corresponds to additive white Gaussian noise. 
When $\Psi \in \mathbb{R}^{d\times m} ,\, F \in \mathbb{R}^{d\times n}$ and $W = \mathbf{1} \in \mathbb{R}^{d\times n}$, \eqref{prblm:wlr_nuclear} becomes 
\begin{equation}\label{prblm:wlr_cluster}
\min_{X\in\mathbb{R}^{m\times n}} ~ \sfrac{1}{2}\|\Psi X - F\|^2 + \tau\|X\|_{*} ,
\end{equation}
which is studied in \cite{Pong}. Examples of \eqref{prblm:wlr_cluster} include multivariate linear regression, multi-class classification and multi-task learning \cite{Pong}. 
%Instead of considering the least square as loss function, 
As proposed in \cite{jaggi_nn,Ji_APG}, one can also consider a general loss function $f(X, F)$ (as a special case of (\ref{prblm:wlr_general_loss})) to solve
\begin{equation}\label{prblm:wlr_loss}
\min_{X\in\mathbb{R}^{m\times n}} ~ f(X, F) + \tau\|X\|_{*} ,
\end{equation}
For different choice of $f(X, F)$ in \eqref{prblm:wlr_loss}, one can recover affine-rank minimization \cite{Toh2009AnAP, ma2011fixed}, regularized semi-definite linear least squares \cite{Shen_anaccelerated}, etc. % and singular value threhsolding (SVT) \cite{caicandesshen,ma2011fixed}, etc. 

\paragraph{Weighted low-rank recovery.}
For the case $W$ is a general non-negative weight, there is also a train of works in the literature. For most of them, $\Psi$ is an identity operator. We start with rank constraint case \begin{equation}\label{prblm:wlr}
\min_{X\in\mathbb{R}^{m\times n}} ~ \sfrac{1}{2}\|(F-X)\odot W\|^2 \quad \mathrm{such~that}\quad \mathrm{rank}(X) \leq r , 
\end{equation}
which is well-studied in the literature under different settings  \cite{lupeiwang,shpak,srebro,markovosky1,markovosky3}. In \cite{manton}, instead of considering a generic weight $W$, the authors proposed a general matrix induced weighted norm
\begin{equation}\label{manton}
 \min_{X\in\mathbb{R}^{m\times n}} \|F-X\|_{Q}^2  \quad \mathrm{such~that}\quad \mathrm{rank}(X) \leq r , 
\end{equation}
where ${Q\in{\R}^{mn\times mn}}$ is symmetric positive definite and $${\|F-X\|_Q^2\eqdef {\rm vec}(F-X)^\top Q{\rm vec}(F-X)}$$ with ${\rm vec}(\cdot)$ being an operator which maps the entries of $\mathbb{R}^{m\times n}$ to vectors in $\mathbb{R}^{mn\times 1}$ by stacking the columns.~We refer to \cite{dutta_thesis,manton,duttali_bg,duttalirichtarik_modeling,razenshteyn2016weighted,duttali_acl} and the references therein for more discussions. If we lift the constraint to the objective function as for \eqref{prblm:wlr_rank}, we get the problem below: 
\begin{equation}\label{prblm:wlr_rankconstraint}
 \min_{X\in\mathbb{R}^{m\times n}} \sfrac{1}{2}\|(X - F)\odot W\|^2+\tau{\rm rank}(X) ,\end{equation}
which is studied in \cite{duttali,dutta_thesis}.~The latest addition to this class of problems is the weighted singular value thresholding studied in \cite{duttaligongshah} which takes the form:
\begin{equation}\label{eq:wsvt}
\min_{X\in\mathbb{R}^{m\times n}}\sfrac{1}{2}\|(X - F)W\|^2+\tau \|X\|_{*} ,
\end{equation}
where $W\in\R^{n\times n}$ is a weight matrix.~Let $U\Sigma V^\top$ be a SVD of $W$ with $\Sigma = {\rm diag}(\sigma_1\;\sigma_2\cdots \sigma_{n})$. Applying the unitary invariance of the norms (and by the change of variable $X\to XU$), problem \eqref{eq:wsvt} becomes
\begin{equation}
\min_{X\in\mathbb{R}^{m\times n}}\sfrac{1}{2}\|(X-FU)\Sigma\|^2+\tau \|X\|_{*} ,
\end{equation}
which moreover can be equivalently written as
\begin{equation}
 \min_{X\in\mathbb{R}^{m\times n}}\sfrac{1}{2}\|(X-FU)\odot W_{\Sigma}\|^2+\tau \|X\|_{*},
\end{equation}
where ${W_{\Sigma}=(\sigma_1\mathbf{1};\;\sigma_2\mathbf{1} ;\cdots ;\sigma_n\mathbf{1})\in\R^{m\times n}}$ and $\mathbf{1}\in\R^{m\times 1}$ is the vector of all $1$'s.~In Table \ref{tab:svt_table} below, we summarize the above formulations studied in the literature to highlight the difference and connections between them.

\renewcommand{\arraystretch}{1.1}

\begin{table}[tbhp]
{
\begin{center}
  \begin{tabular}{lll} \hline
   \bf Name & \bf Formulation & \bf Reference \\ \hline
    SVD/PCA & $\min_{X: {\rm rank}(X)\leq r} \tfrac12\|X - F\|^2$ & \cite{svd,pca} \\
SVT & $\min_{X} \tau \|X\|_{*}+\|X - F\|^2$ & \cite{caicandesshen} \\
Weighted low-rank~(WLR)  & $\min_{X: {\rm rank}(X)\leq r} \tfrac12\|(X - F)\odot W\|^2$ & \cite{srebro,shpak,lupeiwang}\\
General WLR~(GWLR) & $\min_{X: {\rm rank}(X)\leq r} \tfrac12\|X - F\|^2_Q$ & \cite{manton,markovosky}\\
Weighted SVD & $\min_{X} \mathrm{rank}(X)+\tfrac12\|(X - F)W\|^2$ &\cite{duttali,dutta_thesis} \\
Weighted SVT~(WSVT) & $\min_{X} \tau \|X\|_{*}+\tfrac12\|(X - F)W\|^2$ &\cite{duttali,dutta_thesis} \\
Nuclear norm regularized GWLR & $\min_{X} \tau \|X\|_{*}+\tfrac12\|(\Psi X - F)\odot W\|^2$ & \textcolor{red}{This work} \\
Nuclear norm constrained & $\min_{X: \|X\|_{*}\leq t/2} f(X)$ & \cite{jaggi_nn,ma2011fixed,candes_MC}\\
Trace norm minimization & $\min_X f(X)+\tau\|X\|_{*}$ & \cite{Ji_APG,bl_factor,wen2012lmafit,svdfree_xiao}\\
 \hline
  \end{tabular}
\end{center}
}\vspace{-5mm}
\caption{SVT, weighted low-rank approximation and their variants.}\label{tab:svt_table}
\end{table}

\renewcommand{\arraystretch}{1.0}

\subsection{Contributions}

In this paper, we propose a general model \eqref{prblm:wlr_nuclear} for low-rank recovery. Based on the variational formulation of nuclear norm, we propose an efficient algorithm which avoids computing SVD.~More precisely, our contributions include the following aspects. 

\paragraph{\myNum{i} A generic low-rank recovery model.}
We propose general low-rank recovery models which covers several existing works as special cases.~We provide a detailed comparison of our problem with the existing ones, both analytically and empirically. {We believe problem \eqref{prblm:wlr_nuclear} with our dedicated {\em structure dependent} analysis should be studied as a standalone problem to close the existing knowledge gap.} %\jingwei{This is a bit vague, for example now we dont have analytical analysis of the model. need to revise...}

\paragraph{\myNum{ii} An efficient adaptive rank continuation algorithm.} 
In the literature, numerous numerical schemes can be applied to solve \eqref{prblm:wlr_nuclear}, since it is the sum of a smooth function and a non-smooth one. However, most of these algorithms require computing SVD, which does not scale properly with the dimension of the problem \cite{duttaligongshah,Ji_APG,Pong,RPCA-BL}. 
To efficiently solve \eqref{prblm:wlr_nuclear}, we propose an SVD-free method (see Algorithm \ref{alg:alg}). By combing proximal gradient descent \cite{lions1979splitting} and the variational characteristic of nuclear norm, we design a ``proximal gradient \& alternating minimization method'' which we coin as \program. 
Our algorithm can also be applied to solve the general model \eqref{prblm:wlr_general_loss} if the loss function $f(X,F,W)$ is smoothly differentiable with gradient being Lipschitz continuous. 
Moreover, our algorithm can be easily extended to the non-convex loss function case. 

%\paragraph{A rank continuation strategy}
Based on the result of \cite{liang2017activity}, we show that the sequence generated by Algorithm \ref{alg:alg} can find the rank of the minimizer (to which the generated sequence converges) in {\it finite number of iterations}, which we call {\it rank identification} property. In turn, we design a {\it rank continuation} technique which leads to Algorithm \ref{alg:alg_rc}. Compare to Algorithm \ref{alg:alg}, rank continuation is less sensitive to initial parameter, % $r$ which is the controlling parameter in the variational formulation of nuclear norm (Definition \ref{prop:variationalcharcater}). 
and asymptotically achieves the minimal per iteration complexity. 

\paragraph{\myNum{iii} Numerical comparisons.}
We evaluate our algorithms against 15 state-of-the-art weighted and unweighted low-rank approximation methods on various tasks, including structure  from  motion~(SfM) and photometric stereo, background estimation from fully and partially observed data, and matrix completion. In these problems, different weights are used as deem fit---from binary weights to random large weights. We observed in all the tasks our weighted low-rank algorithm performs either better or is as good as the other algorithms. This indicates that our algorithm is robust and scalable to both binary and general weights on a diverse set of tasks. 

\subsection{Notions and definitions} \label{sec:basic_term}

Throughout the paper, $\mathbb{R}^{n}$ is a finite dimensional Euclidean space equipped with scalar product $\iprod{\cdot}{\cdot}$ and induced norm $\|{\cdot}\|$. We abuse the notation $\|{\cdot}\|$ for the Frobenius norm when $\cdot$ is a matrix. $\Id_{n}$ denotes the identity operator on $\mathbb{R}^{n}$. 
%
%$\mathbb{N}$ is the set of non-negative integers and $k \in \mathbb{N}$ is the index. %$\R^n$ is the Euclidean space of $n$ dimension, and $\Id$ denotes the identity operator on $\R^n$. 
Let $S \subset \mathbb{R}^{n}$ be a non-empty close compact set, then $\mathrm{ri}(S)$ denotes its relative interior, and $\mathrm{par}(S)$ is the subspace which is parallel to $\mathrm{span}(S)$. 
The sub-differential of a proper closed convex function $g: \mathbb{R}^{n} \to \R\cup\{+\infty\}$ is a set-valued mapping defined by
$\partial g :  \mathbb{R}^{n} \rightrightarrows \mathbb{R}^{n} ,~ x \mapsto \big\{ v\in\mathbb{R}^{n} | g(x') \geq g(x) + \iprod{v}{x'-x} ,\, \forall x' \in \mathbb{R}^{n} \big\}$.

%\begin{definition}[$L$-smooth] \label{dfn:L-smooth}
%A function $f$ is $L$ smooth if its gradient is $L$-Lipschitz continuous, that is, for all $x,y\in\R^n$, $\|\nabla f(x)-\nabla f(y)\|\le L\|x-y\|$.  This can be equivalently written as for all $x,y\in\R^n$, $${f(x)\le f(y)+\langle \nabla f(y),x-y\rangle+ (L/2)\|x-y\|^2.}$$  
%\end{definition}
%
%\begin{definition}[$\mu$-strongly convex)]
%A function $f$ is $\mu$-strongly convex if for all $x,y\in\R^n$, $${f(x)\ge f(y)+\langle \nabla f(y),x-y\rangle+ (\mu/2)\|x-y\|^2.}$$  
%\end{definition}

\begin{definition}\label{def:prox}
The proximal mapping (or proximal operator) of a proper closed convex function $g:\R^n\to\R\cup\{+\infty\}$ is defined as: let $\gamma>0$
\begin{equation}
{\rm prox}_{\gamma g}(y)=\arg\min_x \big\{ \gamma g(x)+\sfrac{1}{2}\|x-y\|_2^2 \big\} .
\end{equation}
% where $\lambda>0$ is a balancing parameter. 
\end{definition}

For nuclear norm, its proximal mapping is singular value thresholding (SVT) \cite{caicandesshen}, which is the lifting of vector soft-shrinkage thresholding to matrix \cite{shrinkage2}. 

%\begin{example}[Singular value thresholding (SVT) \cite{caicandesshen}]
%Let $g = \|X\|_{*}$ be the nuclear norm, given a point $Y$ whose SVD reads $Y = U \Sigma V^T$, then the proximal mapping of nuclear norm is {\em singular value thresholding}
%\begin{equation}\label{eq:svt}
%{\rm prox}_{\lambda g}(Y) = U \mathcal{T}_{\lambda}(\Sigma) V^T ,
%\end{equation}
%where $\mathcal{T}_{\lambda}(\cdot)$ denotes soft-shrinkage thresholding operator. 
%\end{example}

\begin{lemma}[Variational formulation of nuclear norm \cite{recht2007,Rennie}]\label{prop:variationalcharcater}
Let $X\in\mathbb{R}^{m\times n}$ and $U\in\mathbb{R}^{m\times r}, V\in\mathbb{R}^{r\times n}$ with $r \geq \mathrm{rank}(X)$. We can write 
\[
\|X\|_{*}
= \min_{\substack{X= UV\\U\in\mathbb{R}^{m\times r}, V\in\mathbb{R}^{r\times n}}} \|U\|\|V\| 
= \min_{\substack{X= UV\\U\in\mathbb{R}^{m\times r}, V\in\mathbb{R}^{r\times n}}}\sfrac{1}{2}(\|U\|^2+\|V\|^2).
\]
\end{lemma}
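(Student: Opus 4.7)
The plan is to prove both equalities at once by (i) showing the lower bound $\|X\|_{*} \le \tfrac12(\|U\|^2+\|V\|^2)$ for every admissible factorization $X=UV$, (ii) producing a factorization that attains the bound, and (iii) reconciling the two variational forms through a rescaling argument. For (i) the key ingredient is the matrix inequality $\|UV\|_{*} \le \|U\|\cdot\|V\|$; combining it with the arithmetic-geometric mean inequality $\|U\|\cdot\|V\| \le \tfrac12(\|U\|^2+\|V\|^2)$ then yields the required bound.

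To prove $\|UV\|_{*} \le \|U\|\cdot\|V\|$, I would start from a compact SVD $X = A\Sigma B^\top$, where $\Sigma = \mathrm{diag}(\sigma_1,\dots,\sigma_k)$ with $k=\mathrm{rank}(X)$ and $A\in\mathbb{R}^{m\times k}$, $B\in\mathbb{R}^{n\times k}$ have orthonormal columns. Then
\[
\|X\|_{*} \,=\, \mathrm{tr}(\Sigma) \,=\, \mathrm{tr}(A^\top X B) \,=\, \mathrm{tr}\bigl((A^\top U)(VB)\bigr) \,=\, \bigl\langle (A^\top U)^\top,\, VB \bigr\rangle_F ,
\]
and the Frobenius Cauchy-Schwarz inequality, together with the fact that multiplication by matrices with orthonormal columns cannot increase the Frobenius norm, gives
\[
\|X\|_{*} \,\le\, \|A^\top U\|\cdot\|VB\| \,\le\, \|U\|\cdot\|V\| ,
\]
establishing (i).

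For (ii) I would exhibit an explicit optimiser built from the SVD of $X$: take $U^\star\in\mathbb{R}^{m\times r}$ whose first $k$ columns are those of $A\Sigma^{1/2}$ (the remaining $r-k$ columns zero) and $V^\star\in\mathbb{R}^{r\times n}$ whose first $k$ rows are those of $\Sigma^{1/2}B^\top$ (the remaining $r-k$ rows zero); the hypothesis $r\ge k$ permits this padding. A short computation gives $U^\star V^\star = X$ and $\|U^\star\|^2 = \|V^\star\|^2 = \mathrm{tr}(\Sigma) = \|X\|_{*}$, so $\|U^\star\|\cdot\|V^\star\| = \tfrac12(\|U^\star\|^2+\|V^\star\|^2) = \|X\|_{*}$, matching the bound in (i). For (iii), given any feasible $(U,V)$, the rescaled pair $(\alpha U, V/\alpha)$ is also feasible for every $\alpha>0$, preserves the product $\|U\|\cdot\|V\|$, and minimises $\tfrac12(\|\alpha U\|^2+\|V/\alpha\|^2)$ at $\alpha^2 = \|V\|/\|U\|$, where the value collapses to $\|U\|\cdot\|V\|$; hence the two infima in the statement coincide. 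The only non-routine step is the trace/Cauchy-Schwarz estimate in (i); the remaining verifications are elementary and will not require additional technical machinery.
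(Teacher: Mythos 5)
Your proof is correct. The paper does not actually prove this lemma---it is imported verbatim from the cited references---so there is no in-paper argument to compare against; your argument (the trace/Cauchy--Schwarz lower bound $\|UV\|_{*}\le\|U\|\,\|V\|$, the SVD-based factorization $U^\star=A\Sigma^{1/2}$, $V^\star=\Sigma^{1/2}B^\top$ padded to width $r$ that attains it, and the rescaling $(\alpha U, V/\alpha)$ reconciling the two variational forms) is precisely the standard proof found in those references, and every step checks out (the only degenerate case, $X=0$, is trivial).
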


%\begin{lemma}[\jingwei{...}]\label{prop:convexity}
%The function $f(X) = \frac{1}{2}\|(F-X)\odot W\|^2$ is strongly convex and smooth with Lipschitz constant $L\eqdef \max_{i,j}W_{i,j}^2$ and strong convexity $\mu\eqdef \min_{i,j}W_{i,j}^2.$ As a consequence, $\Phi(X)$ is strongly convex and smooth with the same convexity and smoothness constants as of $f(X).$
%\end{lemma} 

\paragraph{Paper organization}
The rest of the paper is organized as following. In Section \ref{sec:alg}, we describe our proposed algorithm \program~(Algorithm~\ref{alg:alg}) and discuss its global convergence. In Section \ref{sec:rank_cont}, we show the rank identification property of Algorithm \ref{alg:alg} and then propose a rank continuation strategy (Algorithm \ref{alg:alg_rc}). Numerical experiments are provided in Section \ref{sec:results}, followed by the conclusion of this paper.

\section{An SVD-free algorithm}\label{sec:alg} 
Problem \eqref{prblm:wlr_nuclear} is the composition of a smooth term and a non-smooth term. In the literature, numerical methods for such a structured problem are well studied, such as proximal gradient descent \cite{lions1979splitting} (a.k.a. Forward--Backward splitting) and its various variants including the celebrated FISTA \cite{fista09,fista_cd}.~Indeed, our problem \eqref{prblm:wlr_nuclear} can be handled by proximal gradient descent. However, such a method requires repeated SVD computation, which can significantly slow down its performance in many practical scenarios where the data size is large. 
Therefore, in this section, by combining proximal gradient descent and the variational formulation of nuclear norm (c.f., Lemma~\ref{prop:variationalcharcater}), we propose a SVD-free method for solving~\eqref{prblm:wlr_nuclear}. 
% Moreover, acceleration is considered \cite{liang2017activity}.

\subsection{Proposed algorithm}\label{subsection:alg}
In this part, we provide a detailed derivation of our algorithm, which is a combination of proximal gradient descent, alternating minimization, and inertial acceleration. For convenience, denote $f(X)= \frac{1}{2}\|\pa{ \Psi (X) - F }\odot W\|^2$ and $g(X)=\|X\|_{*}$.

\paragraph{Step 1 - Inertial proximal gradient descent.}
The first step to derive our algorithm is applying an inertial proximal gradient descent \cite{lions1979splitting,liang2017activity} to solve problem \eqref{prblm:wlr_nuclear}. Since $\Psi$ is a bounded linear mapping, we have the following simple lemma. % $f(X)$ is smooth differentiable and $\nabla f$ is Lipschitz continuous. 

\begin{lemma}
Let $\widetilde{W} = W\odot W$. The loss $f(X)$ is smoothly differentiable with its gradient given by %reads% then it is easy to verify that % the gradient of $f(X)$ reads,
\[
\nabla f(X) = \nabla \Psi(X) \Pa{ \pa{\Psi (X) - F} \odot \widetilde{W} }   ,
\]
which is $L$-Lipschitz continuous with $\displaystyle{L = \|\nabla \Psi(X)\|^2 \max_{i,j} \widetilde{W}_{i,j}}$. 
\end{lemma}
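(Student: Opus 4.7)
The plan is to reduce everything to the linearity of $\Psi$ and the basic identity $\|A\odot W\|^2 = \langle A\odot\widetilde{W},A\rangle$ with $\widetilde{W}=W\odot W$, using only the Frobenius inner product structure.

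First I would rewrite the objective as
\[
f(X) = \tfrac{1}{2}\langle \Psi(X)-F,\, (\Psi(X)-F)\odot \widetilde{W}\rangle,
\]
which follows because the Hadamard product satisfies $\langle A\odot W, B\odot W\rangle = \langle A, B\odot \widetilde{W}\rangle$. Differentiation is then immediate: since $\Psi$ is bounded linear, its derivative is itself (a constant map), and the chain rule together with the fact that the quadratic form $Y\mapsto \tfrac12\langle Y, Y\odot \widetilde W\rangle$ has gradient $Y\odot\widetilde W$ yields
\[
\nabla f(X) = \Psi^{*}\!\Pa{(\Psi(X)-F)\odot \widetilde{W}},
\]
which matches the stated formula under the paper's notational convention that $\nabla\Psi(X)$ denotes the (constant) adjoint operator $\Psi^{*}$.

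For Lipschitz continuity, I would chain three elementary estimates. Fix $X,Y$ and use linearity to write
\[
\nabla f(X)-\nabla f(Y) = \Psi^{*}\!\Pa{(\Psi(X)-\Psi(Y))\odot \widetilde{W}}.
\]
Then: (a) $\|\Psi^*(Z)\|\le \|\Psi^*\|\,\|Z\| = \|\Psi\|\,\|Z\|$ by the operator-norm bound; (b) $\|Z\odot \widetilde{W}\| \le (\max_{i,j}\widetilde W_{i,j})\,\|Z\|$ in the Frobenius norm, since each entry is scaled by at most the maximum weight; (c) $\|\Psi(X)-\Psi(Y)\|\le \|\Psi\|\,\|X-Y\|$ again by linearity. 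Combining the three gives
\[
\|\nabla f(X)-\nabla f(Y)\| \le \|\Psi\|^{2}\,\big(\max_{i,j}\widetilde W_{i,j}\big)\,\|X-Y\|,
\]
which is precisely $L\|X-Y\|$ with $L=\|\nabla\Psi(X)\|^{2}\max_{i,j}\widetilde W_{i,j}$.

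There is no substantive obstacle; the only point that requires care is bookkeeping the notation $\nabla\Psi(X)$, which for a bounded linear $\Psi$ collapses to the adjoint operator and is constant in $X$. All other steps are one-line consequences of the definition of the Hadamard product, linearity, and the submultiplicativity of the operator norm with respect to the Frobenius norm.
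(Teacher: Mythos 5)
Your proof is correct. The paper states this lemma without proof (it is introduced as a ``simple lemma'' following from the bounded linearity of $\Psi$), and your argument --- rewriting the weighted norm via $\|A\odot W\|^2=\langle A, A\odot\widetilde W\rangle$, applying the chain rule with $\nabla\Psi(X)$ read as the constant adjoint $\Psi^{*}$, and chaining the operator-norm and entrywise-scaling bounds --- is exactly the standard computation the authors rely on, including the identification $\|\nabla\Psi(X)\|=\|\Psi\|$ that the paper itself uses in its examples.
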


Below we provide two examples of $\nabla \Psi(X)$:
\begin{itemize}[leftmargin=1cm]
\item In compressed sensing scenario, $\Psi \in \mathbb{R}^{d\times mn}$ is a linear measurement matrix,
\[
\Psi(X) = \Psi \mathrm{vec}(X) 
\quad\mathrm{and}\quad 
\|\nabla \Psi(X)\| = \|\Psi\| .
\]
%and $\|\nabla \Psi(X)\| = \|\Psi\|$. 
\item For matrix completion problem, $\Psi \in \mathbb{R}^{m\times n}$ is a binary mask, and 
\[
\Psi(X) = \Psi \odot X 
\quad\mathrm{and}\quad 
\|\nabla \Psi(X)\| = 1 .
\]
%and $\|\nabla \Psi(X)\| = 1$. 
\end{itemize}
%%%%
%Derivation
%\[
%\begin{aligned}
%&\lim_{t\to0} \sfrac{f(X+th)-f(X)}{t} \\
%&= \frac{\frac{1}{2}\|(A-\Psi (X+th))\odot W\|^2 - \frac{1}{2}\|(A-\Psi X)\odot W\|^2}{t}  \\
%&= \frac{\|(A-\Psi X)\odot W - t\Psi h\odot W\|^2 - \|(A-\Psi X)\odot W\|^2}{2t}  \\
%&= \frac{\|(A-\Psi X)\odot W\|^2 - 2\iprod{(A-\Psi X)\odot W}{t\Psi h\odot W} + t^2\|\Psi h\odot W\|^2 - \|(A-\Psi X)\odot W\|^2}{2t}  \\ 
%&= \frac{ - 2\iprod{(A-\Psi X)\odot W}{t\Psi h\odot W} + t^2\|\Psi h\odot W\|^2}{2t} \\
%&= - \iprod{(A-\Psi X)\odot W}{\Psi h\odot W}  \\
%&= - \iprod{\Psi^T (A-\Psi X)\odot W\odot W}{ h}
%\end{aligned}
%\]
%Lipschitz constant
%\[
%\begin{aligned}
%\|\nabla f(X) - \nabla f(Y)\|
%&= \|\Psi^T \Pa{ (\Psi X - F)\odot \widetilde{W} } - \Psi^T \Pa{ (\Psi Y - F)\odot \widetilde{W} }\| \\
%&= \|\Psi^T \pa{ \Psi X \odot \widetilde{W} } - \Psi^T \pa{ \Psi Y \odot \widetilde{W} }\| \\
%&= \|\Psi^T \pa{ \Psi (X-Y) \odot \widetilde{W} } \| \\
%&\leq \|\Psi\| \|\Psi (X-Y) \odot \widetilde{W}\| \\
%\end{aligned}
%\]
%%%% 
In the literature, a routine approach to solve \eqref{prblm:wlr_nuclear} is inertial proximal gradient descent. 
Let $X_0 \in \mathbb{R}^{m\times n}$ be an arbitrary starting point, we consider the following iteration
\begin{equation}\label{eq:pgd}
\begin{aligned}
Y_{k} &= X_k + a_k (X_{k} - X_{k-1}) , \\
{Z}_{k} &= Y_{k} - \gamma \nabla \Psi(Y_k) \Pa{ (\Psi (Y_k) - F)\odot \widetilde{W} } , \\
X_{k+1} &= {\arg\min}_{X\in\mathbb{R}^{m\times n}} \tau \|X\|_{*} + \sfrac{1}{2\gamma}\|X-{Z}_{k}\|^2 , 
\end{aligned}
\end{equation}
where $a_k \in [0,1]$ is the inertial parameter, $\gamma \in ]0, 2/L[$ is the step-size. % and the last line of \eqref{eq:pgd} is nothing but SVT of $Z_k$. 
%The second line of \eqref{eq:pgd} has closed form solution, which is simply SVT~\cite{caicandesshen}. 
Iteration \eqref{eq:pgd} is a special case of the general inertial scheme proposed in \cite{liang2017activity}, and we refer to \cite{liang2017activity} and the references therein for more discussion on inertial schemes. 
% The last line of \eqref{eq:pgd} outputs the proximal mapping of nuclear norm, which is SVT and involves SVD computation.  

\paragraph{Step 2 - Alternating minimization.}
%This is the key step of our algorithm. 
As computing the proximal mapping of nuclear norm requires SVD, the goal of second step is to avoid SVD in solving SVT of $Z_k$ by incorporating the variational formulation of nuclear norm. 
To this end, the subproblem of \eqref{eq:pgd} reads
\begin{equation}\label{prblm:proximal}
\min_{X\in\mathbb{R}^{m\times n}} \tau \|X\|_{*} + \sfrac{1}{2\gamma}\|X-{Z}_{k}\|^2 .
\end{equation}
By plugging in the variational formulation of nuclear norm, we arrive at the following constrained minimization problem: let $r > \mathrm{SVT}_{\tau\gamma}(Z_k)$
\begin{equation}\label{prblm:nnfit0}
\min_{X\in\mathbb{R}^{m\times n},U\in\mathbb{R}^{m\times r},V\in\mathbb{R}^{r\times n}}  \sfrac{1}{2}\|X- {Z}_{k}\|^2 + \sfrac{\tau\gamma}{2}\left(\|U\|^2+\|V\|^2\right)\;{\rm such\;that\;}X=UV.
\end{equation}
Instead of considering the augmented Lagrangian multiplier of the constraint \cite{bl_factor}, we directly substitute the constraint $X=UV$ in the objective, which leads to
\begin{equation}\label{prblm:nnfit}
\min_{U\in\mathbb{R}^{m\times r},V\in\mathbb{R}^{r\times n}} \sfrac{1}{2}\|UV-{Z}_{k}\|^2 + \sfrac{\tau\gamma}{2}\left(\|U\|^2+\|V\|^2\right),
\end{equation}
which is a smooth, bi-convex optimization problem in each component $U$ and $V$. 

Different from \eqref{prblm:proximal}, problem \eqref{prblm:nnfit} does not admits closed form solution. However, when either $U$ or $V$ is fixed, the problem becomes a simple least square. Hence we can solve \eqref{prblm:nnfit} via a simple alternating minimization, namely a two block Gauss-Seidel iteration \cite{attouch2013convergence}: given $U_0\in\mathbb{R}^{m\times r}, V_0\in\mathbb{R}^{r\times n}$
\begin{equation}\label{eq:alternating_min}
\begin{aligned}
U_{i+1} &= {{Z}_{k}}V_{i}^\top \big( V_{i}V_{i}^\top+ {\tau\gamma} \Id_r \big)^{-1} , \\
V_{i+1} &= \big( U_{i+1}^\top U_{i+1}+{\tau\gamma}\Id_r \big)^{-1} U_{i+1}^\top{{Z}_{k}} ,
\end{aligned}
\end{equation}
where $\Id_r$ denotes the identity operator on $\mathbb{R}^r$. Substituting \eqref{eq:alternating_min} into \eqref{eq:pgd} as an inner loop, we obtain the following iterative scheme: let $I \in \mathbb{N}_{+}$
\begin{equation}\label{eq:proximal_alternating}
\begin{aligned}
&{Z}_{k} = Y_{k} - \gamma \nabla \Psi(Y_k) \Pa{ (\Psi (Y_k) - F)\odot \widetilde{W} }  , \\
& \textrm{Initialize $U_0, V_0$. For $i=0,\ldots,I-1$:} \\
&\left\lfloor 
\begin{aligned}
	U_{i+1} &= {{Z}_k}V_{i}^\top\Pa{ V_{i}V_{i}^\top + {\tau\gamma}\Id_r }^{-1} , \\
	V_{i+1} &= \Pa{U_{i+1}^\top U_{i+1} + {\tau\gamma}\Id_r}^{-1}U_{i+1}^\top{{Z}_k} ,
\end{aligned}
\right. \\
&X_{k+1} = U_{I}V_{I}  .
\end{aligned}
\end{equation}
Assembling the above steps, we obtain our proposed algorithm, proximal gradient \& alternating minimization method, which we call \program~and is summarized below in Algorithm \ref{alg:alg}.

%\paragraph{Step 3 - Inertial acceleration}
%The last step is add inertial technique into \eqref{eq:proximal_alternating} for acceleration purpose. 
%The combination of inertial and \eqref{eq:proximal_alternating} is rather straightforward, only needs to change the update of $Z_{k}$. %, the only different is updating $Z_k$ with $Y_k$, an extrapolated point based on $X_k, X_{k-1}$, instead of $X_k$. 
%More precisely, let $a_k \in [0, 1]$ be the inertial parameter, then 
%\[
%\begin{aligned}
%&Y_{k} = X_k + a_k (X_{k} - X_{k-1}) , \\
%&{Z}_k = Y_k - \gamma \nabla \Psi(X) \Pa{ (\Psi (Y_k) - F)\odot \widetilde{W} }  . 
%%& \textrm{For $i=1,\ldots,m$:} \\
%%&\left\lfloor 
%%\begin{aligned}
%%	U_{k+1} &= {\bar{Z}_k}V_{k}^\top(V_{k}V_{k}^\top+\frac{\tau}{L}I_r)^{-1} , \\
%%	V_{k+1} &= (U_{k+1}^\top U_{k+1}+\frac{\tau}{L}I_r)^{-1}U_{k+1}^\top{\bar{Z}_k} ,
%%\end{aligned}
%%\right. \\
%%&X_{k+1} = U_{k+1}V_{k+1}
%\end{aligned}
%\]
%%The rest follows that of \eqref{eq:proximal_alternating}. 
%%
%Assembling the above steps, we obtain our proposed algorithm, proximal gradient and alternating minimization method, which we call \program~and is described below in Algorithm \ref{alg:alg}. 

\begin{center}
	\begin{minipage}{0.975\linewidth}
	
\begin{algorithm}[H]
\caption{A Proximal Gradient \& Alternating Minimization Method (ProGrAMMe)}\label{alg:alg}
\begin{algorithmic}[1]
%\STATE{Data matrix $F$, weight matrix $W $; Choose $\tau >0$, $r > 0$ and $I \in \mathbb{N}_{+}$\;}
\STATE{Compute $\widetilde{W}=W\odot W, ~L$ and et $\gamma \in ]0, 2/L[$; Choose $r > 0$ and $I \in \mathbb{N}_{+}$;}
\WHILE{not convergent}
\STATE\label{line3}{$Y_{k} = X_k + a_k (X_{k} - X_{k-1}) $,\hfill $//$\textcolor{blue}{\tt inertial step}}$//$~~~~~
\STATE{${Z}_{k} = Y_{k} - \gamma \nabla \Psi(Y_k) \Pa{ (\Psi (Y_k) - F)\odot \widetilde{W} }$,\hfill $//$\textcolor{blue}{\tt gradient descent}}$//$~~~~~
\STATE{Initialize $U_0\in\R^{m\times r}, V_0\in\R^{r\times n}$,}
%
% \FOR{$i=0,...,I-1$} 
\STATE{{\bf for} $i=1,...,I-1$ {\bf do}\hfill $//$\textcolor{blue}{\tt inner loop}}$//$~~~~~
\STATE{\qquad$U_{i+1}={{Z}_k}V_{i}^\top \pa{ V_{i}V_{i}^\top+ {\tau\gamma}\Id_r }^{-1}$,}
\STATE{\qquad$V_{i+1}=\pa{ U_{i+1}^\top U_{i+1}+{\tau\gamma}\Id_r }^{-1}U_{i+1}^\top{{Z}_k}$,}
\STATE{{\bf end for}}
% \ENDFOR
%
\STATE{$X_{k+1}=U_{I}V_{I}$.}
\ENDWHILE
\RETURN $X_{k+1}$
\end{algorithmic}
\end{algorithm}
	\end{minipage}
\end{center}

\begin{remark}$~$
\begin{itemize}
\item
One highlight of our algorithm is that, via variational formulation of nuclear norm, we relaxed the convex subproblem \eqref{prblm:proximal} to a non-convex \eqref{prblm:nnfit} problem. 
\item
Every step of Algorithm \ref{alg:alg} requires initializing $U_0, V_0$ for the inner step, and the simplest way is using the $U_I, V_I$ from the last step. 
\item
For the controlling parameter $r$, theoretically it does not make any difference as long as it is larger than the rank of the solution of \eqref{prblm:wlr_nuclear}. However, practically it is crucial to the performance of Algorithm \ref{alg:alg}. Detailed discussion is provided in Section \ref{sec:rank_cont}.
\end{itemize}
\end{remark}

\begin{remark}
In the literature, several SVD-free approaches were proposed. For example, variational formulation was also considered in \cite{bl_factor} and the resulted problem was solved by method of Lagrange multiplier while we directly plug the constraint into the objective. 
In \cite{svdfree_xiao}, a dual characterization of the nuclear norm was used and a SVD-free gradient descent was designed. 
The benefits of our approach, as we shall see later, are  simple convergence analysis (see Secsion \ref{subsec:convergence}) and extensions to more general settings (see Section \ref{sec:generlization}). 
% {\textcolor{red}{It would be nicer if we can say why ours is better. Do the other allow easy convergence analysis? Do they have slower performance?}}
%In contrast, Liu et al.\ \cite{Liu:2014:NNR} converted \eqref{prblm:JiYe_nn} to a Riemannian optimization problem over matrix (Grassmannian) manifold and used conjugate gradient method to improve its convergence than the standard gradient descent. 
%Pong et al.\ proposed several numerical procedures, such as, DCG, DGP, PAPG, ${\rm PAPG_{avg}}$ in \cite{Pong} to solve \eqref{prblm:JiYe_nn} in their special form. 
%However, \cite{Pong} noted that DCG is slow; DGP and PAPG work only when the largest and smallest singular values of $A=W^\top$ are not too large and not too small, respectively. 
%Recently, to solve WSVT, Dutta et al.\ argued that in practice the weight matrix $W$ may have very large and small singular values and the proposed methods in \cite{Pong,Ji_APG,Shen_anaccelerated} struggle to converge (see~\cite{duttaligongshah}).
\end{remark}

\begin{remark}[Per iteration complexity]
Comparing Algorithm \ref{alg:alg} to proximal gradient descent \eqref{eq:pgd}, the only difference is \texttt{Line 5-9}. For proximal gradient descent, since SVD is needed, the iteration complexity each step is $O(mn\min\{m, n\})$. For Algorithm \ref{alg:alg}, suppose $I = 1$, the complexity of \texttt{Line 7-8} is $O((m+n+r)r^2)$. 
It can be concluded that, the smaller the value of $r$ (still larger than the rank of the solutions), the lower the per iteration complexity of Algorithm \ref{alg:alg}. As a result, the choice of $r$ is crucial to the practical performance of Algorithm \ref{alg:alg}. Therefore, in Section \ref{sec:rank_cont}, a detailed discussion is provided on how to choose $r$. 
\end{remark}

%\textcolor{red}{We note that, unlike Ban et al.\ in \cite{ban2019regularized}, we do not solve \eqref{prblm:wlr_nuclear} directly by replacing $\|X\|_*$ by Proposition \ref{prop:variationalcharcater} and then by using a sketch based update. }

%This non-convex relaxation step is where we deviate from the algorithm in \cite{Ji_APG}: we do not want to use SVD to directly solve \eqref{prblm:proximal} and, instead, we take the SVD-free approach. 

\subsubsection{Relation with existing work} 
Our algorithm is closely related with proximal splitting method and its variants, as our first step to derive Algorithm \ref{alg:alg} is the inertial proximal gradient descent (for example, proximal gradient descent \cite{lions1979splitting} and its accelerated versions including FISTA \cite{fista09,fista_cd}, as in 
\cite{Shen_anaccelerated,Toh2009AnAP,Ji_APG} where FISTA was adopted to solving low-rank recovery problem). 
% Jaggi et al.\ used a semi-definite program~(SDP) solver of \cite{hazan} to solve problem \eqref{prblm:jaggi_nn}. 

%In this part, we provide a dedicated discussion on relation with some related work, including \cite{Buchanan,bl_factor} for formulation and \cite{attouch2010proximal,bolte2014proximal} for algorithm. 

Our model \eqref{prblm:wlr_nuclear} and Algorithm \ref{alg:alg} share similarities with those of \cite{Buchanan,regularizedlow_das,bl_factor}, but there are some fundamental differences. 
First of all, all these works consider only the case $\Psi = \Id$, \ie $\Psi$ is an identity mapping
\begin{equation}\label{prblm:wlr_id}
\min_{X} \sfrac{1}{2}\|(X - F)\odot W\|^2 + \tau\|X\|_{*}  .
\end{equation}
In \cite{Buchanan,bl_factor}, the authors consider directly applying matrix factorization to \eqref{prblm:wlr_id} which results in: let $r > 0$ %and $U \in \mathbb{R}^{m\times r}, V \in \mathbb{R}^{r\times n}$
\begin{equation}\label{prblm:wlr_id_mf}
\min_{U \in \mathbb{R}^{m\times r}, V \in \mathbb{R}^{r\times n}} \sfrac{1}{2}\|(UV - F)\odot W\|^2 + \sfrac{\tau}{2}\left(\|U\|^2+\|V\|^2\right) ,
\end{equation}
which is a smooth and bi-convex optimization problem. 
Our approach, on the other hand, only consider applying matrix factorization for the subproblem of proximal gradient descent \eqref{eq:pgd}.

It is worth noting that \eqref{prblm:wlr_id_mf} shares the same continuous property as \eqref{prblm:nnfit}, hence can by handled by alternating minimization algorithm. 
Both \eqref{prblm:wlr_id_mf} and \eqref{prblm:nnfit} are also special cases of the following non-convex problem
\begin{equation}\label{prblm:general}
\min_{U, V} f(U, V) + \tau_1 g_1(U) + \tau_2 g_2(V) 
\end{equation}
where $f(U, V)$ is differentiable with Lipschitz continuous gradient, $\tau_1, \tau_2 > 0$ are regularization parameters and $g_1(\cdot),\, g_2(\cdot)$ are (non-smooth) regularization terms for $U, V$, respectively. 
Problem \eqref{prblm:general} was well studied in \cite{attouch2010proximal,bolte2014proximal}; for instance, the following algorithm was proposed in \cite{attouch2010proximal}: 
\[
\begin{aligned}
U_{k+1} &= \arg\min_{U \in \mathbb{R}^{m\times r}}~\big\{ f(U, V_{k}) + \tau_1 g_1(U) + \sfrac{1}{2\alpha}\|U - U_{k}\|^2 \big\} , \\
V_{k+1} &= \arg\min_{V \in \mathbb{R}^{r\times n}}~\big\{ f(U_{k+1}, V) + \tau_2 g_2(V) + \sfrac{1}{2\gamma}\|V - V_{k}\|^2 \big\} ,
\end{aligned}
\]
where $\alpha, \gamma > 0$ are parameters. 
Specializing to the case of \eqref{prblm:wlr_id_mf}, we get
\[
\begin{aligned}
U_{k+1} &= \arg\min~\Big\{ \sfrac{1}{2}\|(UV_{k} - F)\odot W\|^2 + \sfrac{\tau}{2} \|U\|^2 + \sfrac{1}{2\alpha}\|U - U_{k}\|^2 \Big\} , \\
V_{k+1} &= \arg\min~\Big\{ \sfrac{1}{2}\|(U_{k+1} V - F)\odot W\|^2 + \sfrac{\tau}{2}\|V\|^2 + \sfrac{1}{2\gamma}\|V - V_{k}\|^2 \Big\} .
\end{aligned}
\]
It can be observed that, though sharing similarities, our proposed algorithm is different from the above schemes. Same for the algorithm proposed in \cite{bolte2014proximal}.

%We do not use the matrix factorization scheme to represent the original problem \eqref{prblm:wlr_nuclear} with general weight matrix $W$. We linearize \eqref{prblm:wlr_nuclear} and transform it by using the matrix product for low-rank and replace the nuclear norm by Proposition \ref{prop:variationalcharcater}. Finally, we use the alternating minimization technique on this transformed linearized problem \eqref{prblm:nnfit}. That is, \eqref{prblm:nnfit} serves as an intermediate problem to solve the main problem \eqref{prblm:wlr_nuclear} with a general weight matrix $W$. Now we will state a result that {\em unifies} \eqref{prblm:proximal} and \eqref{prblm:nnfit}.
%
%
%
%For the rest of the section, we provide theoretical guarantees of Algorithm \ref{alg:alg}, including the relation between the solution of the variational formulation \eqref{prblm:nnfit} and the original nuclear norm regularized form \eqref{prblm:wlr_nuclear}, the convergence property of Algorithm \ref{alg:alg}. 

%\vspace{-5pt}
\subsection{Global convergence of Algorithm \ref{alg:alg}}
\label{subsec:convergence}

In this part, we provide global convergence analysis of Algorithm \ref{alg:alg}. The key of our proof is rewriting Algorithm \ref{alg:alg} as an inexact version of inertial proximal gradient descent \eqref{eq:pgd} whose convergence property is well established in the literature. 
Such an equivalence is obtained based on the result below from~\cite[Theorem 1]{bl_factor}.

\begin{lemma}[{\cite[Theorem 1]{bl_factor}}]\label{lem:equivalence}
Let $\widehat{X}$ be the unique minimizer of \eqref{prblm:proximal}, \ie \[\min_{X\in\mathbb{R}^{m\times n}} \tau \|X\|_{*} + \sfrac{1}{2\gamma}\|X-{Z}_{k}\|^2\] with rank $\hat{r} = \mathrm{rank}\pa{\widehat{X}}$, and $(\widehat{U}, \widehat{V})$ a solution of \eqref{prblm:nnfit} \[\min_{U\in\mathbb{R}^{m\times r},V\in\mathbb{R}^{r\times n}} \sfrac{\tau\gamma}{2}\left(\|U\|^2+\|V\|^2\right) + \sfrac{1}{2}\|UV-{Z}_{k}\|^2 \] with $r \geq \hat{r}$. There holds $\widehat{X} = \widehat{U} \widehat{V}$.  
\end{lemma}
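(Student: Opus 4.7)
My plan is to prove this equivalence by combining Lemma \ref{prop:variationalcharcater} with the uniqueness of the SVT minimizer. The key observation is that problem \eqref{prblm:nnfit} is nothing but the variational characterization of the nuclear norm plugged into problem \eqref{prblm:proximal}, provided the inner dimension $r$ is large enough to accommodate the rank of the SVT solution.

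First, I would establish the inequality
\[
\min_{U \in \mathbb{R}^{m \times r}, V \in \mathbb{R}^{r \times n}} \Big\{ \sfrac{1}{2}\|UV - Z_k\|^2 + \sfrac{\tau\gamma}{2}(\|U\|^2 + \|V\|^2) \Big\} \;\geq\; \min_{X \in \mathbb{R}^{m \times n}} \Big\{ \sfrac{1}{2}\|X - Z_k\|^2 + \tau\gamma \|X\|_{*} \Big\} .
\]
Indeed, for any feasible $(U,V)$, setting $X = UV$ yields $\mathrm{rank}(X) \leq r$, and Lemma \ref{prop:variationalcharcater} implies $\|X\|_{*} \leq \sfrac{1}{2}(\|U\|^2 + \|V\|^2)$, which gives the inequality termwise. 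Note that this is simply the problem \eqref{prblm:proximal} after multiplication by $\gamma$, so its unique minimizer is $\widehat{X}$.

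Second, I would establish the reverse inequality by constructing an explicit factorization of $\widehat{X}$. Let $\widehat{X} = P\Sigma Q^\top$ be a (thin) SVD with $\Sigma \in \mathbb{R}^{\hat{r} \times \hat{r}}$. Setting $U_{0} = [P\Sigma^{1/2} \mid \mathbf{0}] \in \mathbb{R}^{m\times r}$ and $V_{0} = [\Sigma^{1/2} Q^\top ; \mathbf{0}] \in \mathbb{R}^{r \times n}$ (padding with zero columns/rows, which is possible precisely because $r \geq \hat{r}$), I get $U_0 V_0 = \widehat{X}$ together with $\sfrac{1}{2}(\|U_0\|^2 + \|V_0\|^2) = \mathrm{tr}(\Sigma) = \|\widehat{X}\|_{*}$. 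Evaluating the objective of \eqref{prblm:nnfit} at $(U_0, V_0)$ thus produces exactly the optimal value of the (rescaled) SVT problem, closing the inequality.

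Combining both bounds, any solution $(\widehat{U}, \widehat{V})$ of \eqref{prblm:nnfit} must satisfy
\[
\sfrac{1}{2}\|\widehat{U}\widehat{V} - Z_k\|^2 + \tau\gamma \|\widehat{U}\widehat{V}\|_{*} \;\leq\; \sfrac{1}{2}\|\widehat{U}\widehat{V} - Z_k\|^2 + \sfrac{\tau\gamma}{2}(\|\widehat{U}\|^2 + \|\widehat{V}\|^2) \;=\; \sfrac{1}{2}\|\widehat{X} - Z_k\|^2 + \tau\gamma \|\widehat{X}\|_{*} .
\]
Since $\widehat{X}$ is the \emph{unique} minimizer of the right-hand side functional (strict convexity of $\|\cdot - Z_k\|^2$ ensures this), we conclude $\widehat{U}\widehat{V} = \widehat{X}$.

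The only delicate point is the padding argument in the second step: one must verify that the constructed $(U_0, V_0)$ lies in the prescribed ambient dimensions $\mathbb{R}^{m\times r} \times \mathbb{R}^{r\times n}$, which is exactly where the hypothesis $r \geq \hat{r}$ enters. The rest of the argument is a direct manipulation of Lemma \ref{prop:variationalcharcater} plus uniqueness of the proximal mapping of a proper convex function, so no substantial obstacle is expected beyond being careful with the scaling factor $\gamma$ when passing between \eqref{prblm:proximal} and \eqref{prblm:nnfit}.
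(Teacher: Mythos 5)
Your proof is correct and complete. The paper does not actually prove this lemma itself---it imports it wholesale as Theorem~1 of \cite{bl_factor}---and your sandwiching argument (lower bound from the variational inequality $\|UV\|_{*}\leq\sfrac{1}{2}(\|U\|^2+\|V\|^2)$, upper bound from the zero-padded thin-SVD factorization where $r\geq\hat{r}$ is used, then the conclusion via strong convexity of the proximal objective) is precisely the standard proof of that cited result, with the $\gamma$-rescaling between \eqref{prblm:proximal} and \eqref{prblm:nnfit} handled correctly.
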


The above lemma implies that, although we relaxed the strongly convex problem \eqref{prblm:proximal} to a non-convex one \eqref{prblm:nnfit}, we can recover the unique minimizer of \eqref{prblm:proximal} via solving  \eqref{prblm:nnfit}. In turn, we can cast Algorithm \ref{alg:alg} back to the proximal gradient descent \eqref{eq:pgd}, possibly with approximation errors due to finite step inner loop, and then prove its convergence. To this end, we first propose the following inexact characterization of Algorithm \ref{alg:alg}. 
Given $S\in\mathbb{R}^{m\times n}$ and $\gamma>0$, denote $\mathcal{T}_{\gamma}(S)$ the soft-thresholding operator \cite{shrinkage2} 
\begin{equation}\label{eq:svt}
\mathcal{T}_{\gamma}(S_{i,j}) = \max\big\{|S_{i,j}|-\gamma,~ 0\big\} \times \mathrm{sign}(S_{i,j}).
\end{equation}

%We first discuss the global convergence of $X_k$ to a minimizer of \eqref{prblm:wlr_nuclear}, to this end, we cast Algorithm \ref{alg:alg} as an instance of {\em inexact proximal gradient descent} and then establish its convergence based on existing result in the literature. 
%
%
%Based on Theorem \ref{thm:unify}, when the inner loop of Algorithm \ref{alg:alg} is ran for infinitely many step, then Algorithm \ref{alg:alg} is simply an instance of inertial proximal gradient descent \cite{liang2017activity}. For finite choice of $I$, the iteration accounts for an inexact realization of the proximal mapping of nuclear norm. As a result, we have the following result. 

%{\color{blue} 
\begin{proposition}[Inexact inertial proximal gradient descent]\label{prop:equivalence}
For Algorithm \ref{alg:alg}, let $I \in \mathbb{N}_{+}$. Then there exits a sequence $\{e_k\}_{k}\subset\mathbb{R}^{m\times n}$ such that Algorithm \ref{alg:alg} is equivalent o the following inexact inertial proximal gradient descent 
\begin{equation}\label{eq:inexact_ifb}
\begin{aligned}
Y_{k} &= X_k + a_k (X_{k} - X_{k-1}) , \\
{Z}_k &= Y_k - \gamma \nabla\Psi \Pa{ (\Psi (Y_k) - F)\odot\widetilde{W} }  , \\
X_{k+1} &= \prox_{\tau\gamma \|\cdot\|_*} ({Z}_k) + e_k  .
%R_{k+1} &= \prox_{\tau\gamma \|\cdot\|_*} ({Z}_k) , \\
%X_{k+1} &= %P \mathcal{T}_{\tau\gamma}\big(S + e_k \big)  Q^T = \prox_{\tau\gamma \|\cdot\|_*} ({Z}_k + c_k) , \quad
%R_{k+1} + e_k  .
\end{aligned}
\end{equation}
%where $e_k \in \mathbb{R}^{m\times n}$ accounts for truncation error due to finite-valued $I$. 
% where $Z_k = PSQ^T$ stands for the SVD of $Z_k$ and $e_k \in \mathbb{R}^{m\times n}$ accounts for truncation error due to finite-valued $I$. 
\end{proposition}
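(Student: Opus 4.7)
The plan is to recognize this proposition as essentially a bookkeeping statement: the only difference between Algorithm \ref{alg:alg} and the exact inertial proximal gradient descent \eqref{eq:pgd} lies in how the subproblem \eqref{prblm:proximal} is solved, so the ``inexactness'' $e_k$ can simply be \emph{defined} as the gap between the inner loop output $U_I V_I$ and the true proximal point. The bulk of the work is identifying that the inner loop is genuinely an alternating minimization on the variational reformulation, and then invoking Lemma \ref{lem:equivalence} to relate its exact minimizer back to the SVT.

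First, I would observe that lines 6--9 of Algorithm \ref{alg:alg} are literally the two-block Gauss--Seidel updates \eqref{eq:alternating_min} applied to the bi-convex problem
\begin{equation*}
\min_{U\in\mathbb{R}^{m\times r},V\in\mathbb{R}^{r\times n}} \sfrac{1}{2}\|UV - Z_k\|^2 + \sfrac{\tau\gamma}{2}\bigl(\|U\|^2 + \|V\|^2\bigr),
\end{equation*}
since for fixed $V_i$ the first line of \eqref{eq:alternating_min} is the unique solution of a strongly convex quadratic in $U$ (and analogously for the second line). This identifies the inner loop with approximate minimization of \eqref{prblm:nnfit}. Lemma \ref{lem:equivalence} then tells us that if the inner loop were run to convergence, say to some $(\widehat{U},\widehat{V})$, then
\begin{equation*}
\widehat{U}\widehat{V} \;=\; \widehat{X} \;=\; \prox_{\tau\gamma\|\cdot\|_*}(Z_k),
\end{equation*}
provided $r$ exceeds $\mathrm{rank}(\widehat{X})$.

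Next, for any finite $I \in \mathbb{N}_+$, define
\begin{equation*}
e_k \;\eqdef\; U_I V_I - \prox_{\tau\gamma\|\cdot\|_*}(Z_k).
\end{equation*}
Since line 10 of Algorithm \ref{alg:alg} sets $X_{k+1} = U_I V_I$, this immediately yields $X_{k+1} = \prox_{\tau\gamma\|\cdot\|_*}(Z_k) + e_k$, matching the third line of \eqref{eq:inexact_ifb}. The first two lines of \eqref{eq:inexact_ifb} are verbatim copies of lines 3 and 4 of Algorithm \ref{alg:alg}, so the equivalence follows by construction.

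The only substantive point to verify is that $e_k$ is well-defined, which requires the proximal operator $\prox_{\tau\gamma\|\cdot\|_*}(Z_k)$ to exist uniquely---this is automatic since $\|\cdot\|_*$ is proper, closed, and convex. There is no genuine obstacle in proving the proposition itself; the real content (which this proposition sets up for the convergence analysis in Section \ref{subsec:convergence}) is controlling the magnitude of $e_k$ in terms of $I$, so that the inexact-inertial-proximal-gradient convergence theory of \cite{liang2017activity} can be applied. I would flag this as the task left for the subsequent convergence theorem, rather than as part of proving the current statement.
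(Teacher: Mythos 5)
Your proposal is correct and follows essentially the same route as the paper: both define $e_k$ as the gap between the inner-loop output $U_I V_I$ and the exact proximal point $\prox_{\tau\gamma\|\cdot\|_*}(Z_k)$, with Lemma \ref{lem:equivalence} justifying that the alternating minimization targets that proximal point. Your additional remarks on well-definedness of the prox and on deferring the control of $\|e_k\|$ to the convergence analysis are consistent with how the paper treats the matter.
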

\begin{remark}
$e_k \in \mathbb{R}^{m\times n}$ accounts for truncation error due to finite-valued $I$, and vanishes if the inner iteration of Algorithm \ref{alg:alg} is solved exactly. 
\end{remark}
\begin{proof}
Recall the iteration in \eqref{eq:pgd}
\[
\begin{aligned}
Y_{k} &= X_k + a_k (X_{k} - X_{k-1}) , \\
{Z}_{k} &= Y_{k} - \gamma \nabla \Psi(Y_k) \Pa{ (\Psi (Y_k) - F)\odot \widetilde{W} } , \\
X^{\infty}_{k+1} &= {\arg\min}_{X\in\mathbb{R}^{m\times n}} \tau \|X\|_{*} + \sfrac{1}{2\gamma}\|X-{Z}_{k}\|^2 , 
\end{aligned}
\]
we use $X^{\infty}_{k+1}$ to denote the output of \eqref{eq:pgd},  
we have
\begin{equation}\label{eq:svd-x}
X^{\infty}_{k+1}
= \prox_{\tau\gamma \|\cdot\|_*} ({Z}_k)   .
\end{equation}
For the inner loop of Algorithm \ref{alg:alg}, after $I$ steps of iteration we have
\[
X^{I}_{k+1} 
= U_{I} V_{I} ,
% = P_{I} S_{I} Q^T_{I} 
\]
and that 
\begin{equation}\label{eq:ItoInf}
\lim_{I\to\infty} X^{I}_{k+1} 
= X^{\infty}_{k+1}     
\end{equation}
which owes to Lemma \ref{lem:equivalence}. 
Let $e_k = X^{I}_{k+1} - X^{\infty}_{k+1}$, we conclude the proof.     
\end{proof}
%}

The inexact formulation of Algorithm \ref{alg:alg} allows us to prove its convergence in a rather simple fashion, as \eqref{eq:inexact_ifb} is nothing but a special case of the inertial proximal gradient descent discussed in \cite{liang2017activity}. 
As a consequence, we have the following result regarding the global convergence of Algorithm \ref{alg:alg}.  
Recall that $f(X)= \frac{1}{2}\|\pa{ \Psi (X) - F }\odot W\|^2$ and $g(X)=\|X\|_{*}$.

\begin{proposition}[Convergence of Algorithm \ref{alg:alg}]\label{prop:convergence}
For Algorithm \ref{alg:alg}, let the inertial sequences $\{a_k\}_{k\in\mathbb{N}}$ be such that \[
\limsup_{k} a_k < 1
\quad\mathrm{and}\quad
\sum\nolimits_{k\in\mathbb{N}} a_{k}\|X_{k}-X_{k-1}\|^2 < +\infty .
\]
If, moreover, the error $e_k$ is such that
\[
\sum\nolimits_{k\in\mathbb{N}} k \|{e_k}\|  < +\infty. 
\]
Then, there exists $X^\star \in \arg\min(f+\tau g)$ to which the sequences $\{R_k, X_k\}_{k\in\mathbb{N}}$ generated by Algorithm \ref{alg:alg} converge.
\end{proposition}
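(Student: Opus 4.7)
The plan is to leverage Proposition~\ref{prop:equivalence}, which already recasts Algorithm~\ref{alg:alg} as the inexact inertial proximal gradient iteration \eqref{eq:inexact_ifb}, and then to invoke the convergence theory for inexact inertial Forward--Backward splitting that was developed in \cite{liang2017activity}. In other words, once the equivalence is in hand, no new algorithmic analysis is required: the whole task reduces to checking that all structural hypotheses of the known convergence result are met by $f(X) = \tfrac{1}{2}\|(\Psi(X)-F)\odot W\|^2$ and $g(X) = \|X\|_*$, together with the step-size, inertial and error-summability conditions stated in the proposition.

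Step one is to collect the qualitative properties of $f$ and $g$. The function $g = \|\cdot\|_*$ is proper, convex, and lower semi-continuous; the function $f$ is convex (it is a composition of a bounded linear map with a weighted squared norm), differentiable, and its gradient is $L$-Lipschitz continuous by the lemma stated before \eqref{eq:pgd}. The choice $\gamma \in\, ]0, 2/L[$ is therefore admissible for the inertial Forward--Backward scheme. Existence of a minimizer $X^\star \in \arg\min(f + \tau g)$ follows because $f + \tau g$ is proper, convex, lower semi-continuous, and coercive: $\tau\|X\|_* \ge \tau\sigma_1(X)$ already forces $f(X) + \tau g(X) \to \infty$ as $\|X\|\to \infty$, so the sublevel sets are bounded and $\arg\min(f+\tau g)$ is non-empty.

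Step two is the application of the inexact inertial Forward--Backward convergence theorem from \cite{liang2017activity}. Under the stated assumptions that $\limsup_k a_k < 1$, that $\sum_k a_k \|X_k - X_{k-1}\|^2 < +\infty$, and that $\sum_k k\|e_k\| < +\infty$, that theorem guarantees that the iterates generated by \eqref{eq:inexact_ifb} converge to some point in $\arg\min(f+\tau g)$. Invoking Proposition~\ref{prop:equivalence} to transport this back to the actual iterates $\{X_k\}$ of Algorithm~\ref{alg:alg} completes the argument.

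I expect the main obstacle to be the verification of the error condition $\sum_k k\|e_k\| < +\infty$ in concrete terms. The error $e_k = X^{I}_{k+1} - X^{\infty}_{k+1}$ from the proof of Proposition~\ref{prop:equivalence} comes from truncating the inner alternating minimization \eqref{eq:alternating_min} after $I$ steps. Since \eqref{prblm:nnfit} is strongly convex in each block with the other block fixed, the alternating minimization converges at a linear rate; thus $\|e_k\|$ can be made geometrically small in $I$, and one can in principle let $I = I_k$ grow mildly (\emph{e.g.}\ $I_k = \lceil c \log k \rceil$) to enforce the summability. The proposition, however, assumes this summability as a hypothesis rather than proving it, so this technical issue is deferred. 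The remaining items (the $\limsup$ and the summability involving $a_k$) are standard restrictions already treated in \cite{liang2017activity}, so no further work is needed beyond citing that reference.
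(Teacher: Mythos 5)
Your proposal is correct and follows essentially the same route as the paper: the paper likewise treats Proposition~\ref{prop:equivalence} as the key reduction and then simply invokes the inexact inertial Forward--Backward convergence theorem of \cite{liang2017activity} (their Theorem~3), omitting further details. Your additional verifications (convexity and Lipschitz smoothness of $f$, coercivity of $f+\tau g$, and the remark on enforcing $\sum_k k\|e_k\|<+\infty$ by growing the inner iteration count) are consistent with, and slightly more explicit than, what the paper records.
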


Algorithm \ref{alg:alg} is a special case of the algorithm considered in \cite{liang2017activity}, and hence the convergence can be guaranteed by \cite[Theorem 3]{liang2017activity}. We decide to omit the proof here and refer the reader to \cite{liang2017activity} for detailed discussions.

\begin{remark}
The condition on error $e_k$ implies that the inner problem needs to be solved with an increasing accuracy, \ie the value of $I$ should be increasing along iteration. A more practice approach would be increasing the value of $I$ every $O(1)$ steps. Moreover, we observe that fixed value of $I$ works quite well in practice; see our numerical examples. 
The summability of $a_{k}\|X_{k}-X_{k-1}\|^2$ can be guaranteed by certain choices of $a_k$; See \cite[Theorem 4]{liang2017activity}. One can also use an online approach to determine $a_k$ such that the summability condition holds. For instance, let $a \in [0, 1]$ and $c > 0, \delta > 0$, then $a_k$ can be chosen as $a_k = \min \{a, \frac{c}{k^{1+\delta}\|X_{k}-X_{k-1}\|^2} \} $. 
\end{remark}

\begin{remark}[FISTA-like inertial parameter]
If the error term $e_k$ can be carefully taken care of, such as gradually increase the value of $I$, then according to \cite{aujol2015stability} FISTA rule for updating $a_k$ can be applied, \eg $a_k = \frac{k-1}{k+d}$ for $d>2$. %One can then use the lazy-start strategy of \cite{liang2018improving} for further speed-up. 
\end{remark}

%\begin{remark}
%In \cite{liang2017activity}, the authors also studied the local linear convergence of proximal gradient descent type methods, under the notion of ``partial smoothness'' (Definition \ref{dfn:psf}). However, due to the existence of the error term $e_k$, the local linear convergence of \eqref{eq:inexact_ifb} is more complicated than that of \cite{liang2017activity} where the analysis was obtained only for the exact case. 
%%For example, if $\|{e_k}\| = O(1/k^{2+\delta})$ for some $\delta > 0$, then eventually the convergence rate of \eqref{eq:inexact_ifb} will be dominated by $e_k$. 
%Therefore, for this aspect, we forgo the theoretical analysis and only provide numerical discussions in Section \ref{sec:rank_cont}. 
%\end{remark}

\subsection{Generalization of Algorithm \ref{alg:alg}}\label{sec:generlization}

Throughout this paper, our main focus is \eqref{prblm:wlr_nuclear} whose loss function is a simple weighted least square. In this scope, we discuss several generalization of Algorithm \ref{alg:alg}, including more general loss function (\eg \eqref{prblm:wlr_general_loss}), the non-convex setting and non-linear $\Psi$.

\paragraph{General loss function.} 
Algorithm \ref{alg:alg} is loss function agnostic, as for proximal gradient descent type methods, the condition required for the smooth part is that the function should be smoothly differentiable with gradient being Lipschitz continuous. Therefore, we can apply Algorithm \ref{alg:alg} to solve the more general model \eqref{prblm:wlr_general_loss}, and the only change we need to make to Algorithm \ref{alg:alg} is {\tt Line 4} for which we now have
\[
{Z}_{k} = Y_{k} - \gamma \nabla_1 f(Y_{k},F,W)  ,
\]
where $\nabla_1$ denotes the gradient of $f(X,F,W)$ with respect to $X$. The global convergence result stays the same for the above update.

\paragraph{Non-convex loss function.}
Algorithm \ref{alg:alg} does not require convexity for the loss function. In the literature, the convergence properties of proximal gradient descent type methods for non-convex optimization are well studied, most of them are obtained under \KL inequality owing to the pioneered work \cite{attouch2013convergence}. 
As Algorithm \ref{alg:alg} is a special case of inexact proximal gradient descent, it can also be applied to solve problems where the loss function $f(X,F,W)$ is non-convex. Parameter-wise, there are two main differences between the non-convex and convex cases:
\begin{itemize}[leftmargin=1cm]
\item For step-size $\gamma$, different from the convex case whose upper bound is $2/L$, it reduces to $1/L$ for the non-convex case.
\item The conditions on the error is different.~For the non-convex case, the error should be such that a descent property of certain stability function (see \eg \cite{attouch2013convergence}) should be maintained. As a result, {\it line search} might be needed for the number of inner loop iteration. 
\end{itemize}

\begin{remark}
While keeping the loss function as least square, it still can be non-convex because of the operator $\Psi$, for which case $\Psi$ is a {\it non-linear smooth mapping} instead of being linear. 
For this case, as long as $\Psi$ is such that the gradient is Lipschitz continuous, global convergence of Algorithm \ref{alg:alg} can be guaranteed. 
\end{remark}

%{\color{blue} 

\section{Rank continuation}\label{sec:rank_cont}

As we discussed above, the choice of parameter $r$ is crucial to the performance of Algorithm \ref{alg:alg}: let $r^\star$ be the rank of a solution $X^\star$ of the problem \eqref{prblm:wlr_nuclear}, then the closer the value of $r$ to $r^\star$, the better practical performance of Algorithm \ref{alg:alg} (the best performance if $r = r^\star$). 
However, in general it is impossible to know $r^\star$ {\it a priori}, and usually an overestimation of $r^\star$ is provided in practice which damps the efficiency of the algorithm. 
In this section, we first discuss the rank identification property of Algorithm \ref{alg:alg}, and then discuss a rank continuation strategy which asymptotically achieves the optimal per iteration complexity. 

To simplify the discussion, we introduce an auxiliary variable for iteration \eqref{eq:inexact_ifb} of Proposition \ref{prop:equivalence}, 
\begin{equation}\label{eq:Rk}
R_{k+1} = \prox_{\tau\gamma \|\cdot\|_*} ({Z}_k) . 
\end{equation}

\subsection{Rank identification of \eqref{eq:inexact_ifb}}

In \cite{liang2017activity}, for proximal gradient descent type methods dealing with low-complexity promoting regularization, it was shown that the sequence generated by these methods has a so-called ``{\it finite time activity identification property}''. For nuclear norm, this means that for all $k$ large enough there holds $\mathrm{rank}(R_k) = \mathrm{rank}(X^\star)$, where $X^\star$ is the solution to which $R_k, X_k$ converge.  

\begin{remark}
In practice for \eqref{eq:inexact_ifb}, rank identification can also be observed for the sequence $X_k$, however, due to the lack of structure for the error $e_k$, we cannot prove it for $X_k$.
\end{remark}

In the theorem below, we show the rank identification property of Algorithm \ref{alg:alg}.  
Recall the notations $f(X)= \frac{1}{2}\|\pa{ \Psi (X) - F }\odot W\|^2$ and $g(X)=\|X\|_{*}$ of Section~\ref{subsection:alg}. 

\begin{theorem}[Rank identification]\label{thm:rank_iden}
For Algorithm \ref{alg:alg}, suppose the conditions of Proposition \ref{prop:convergence} hold, then $R_{k}$ converges to $X^\star \in \Arg\min(f+\tau g)$. If, moreover, the following non-degeneracy condition holds
\begin{equation}\label{eq:cnd-nondeg}
- \nabla \Psi(X^\star) \Pa{ (\Psi (X^\star) - F) \odot \widetilde{W} } \in \tau \mathrm{ri}\big( \partial \|X^\star\|_{*} \big)  ,
\end{equation}
%where $\mathrm{ri}(\cdot)$ stands for the relative interior of a closed compact set. 
then there exists a $K > 0$ such that for all $k \geq K$ there holds $\mathrm{rank}(R_{k}) = \mathrm{rank}(X^\star)$. 
%\[
%\mathrm{rank}(X_{k}) = \mathrm{rank}(X^\star) .
%\]
% holds for all $k \geq K$. 
\end{theorem}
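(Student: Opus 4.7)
\medskip

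\noindent\textbf{Proof proposal.} The plan is to reduce the statement to the activity (manifold) identification result of \cite{liang2017activity} by using Proposition~\ref{prop:equivalence} to recast Algorithm~\ref{alg:alg} as the inexact inertial proximal gradient iteration \eqref{eq:inexact_ifb}, and then invoking the well-known fact that the nuclear norm is a \emph{partly smooth} function relative to the fixed-rank manifold (Lewis; Daniilidis--Lewis--Malick). The auxiliary variable $R_{k+1} = \prox_{\tau\gamma\|\cdot\|_*}(Z_k)$ is precisely the ``exact'' proximal gradient iterate driven by the ``inexact'' $Z_k$, so it is the natural object on which to run the identification argument.

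First I would show $R_k \to X^\star$. Proposition~\ref{prop:convergence} already yields $X_k \to X^\star$. The summability of $a_k\|X_k - X_{k-1}\|^2$ forces $a_k(X_k - X_{k-1}) \to 0$, hence $Y_k \to X^\star$. Lipschitz continuity of $\nabla f$ then gives $Z_k \to X^\star - \gamma\nabla f(X^\star)$, and continuity of the proximal mapping of the nuclear norm together with the fixed-point characterization of $X^\star$ (namely $X^\star = \prox_{\tau\gamma\|\cdot\|_*}\bigl(X^\star - \gamma\nabla f(X^\star)\bigr)$, which is equivalent to $-\nabla f(X^\star) \in \tau\partial\|X^\star\|_*$) yields $R_{k+1} \to X^\star$.

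Next I would verify the two structural ingredients needed by \cite[Theorem in the identification section]{liang2017activity}. (a) \emph{Partial smoothness}: the nuclear norm is partly smooth at $X^\star$ relative to the smooth manifold $\cM_{r^\star} = \{X \in \mathbb{R}^{m\times n} : \mathrm{rank}(X) = r^\star\}$, where $r^\star = \mathrm{rank}(X^\star)$; in particular, the rank is locally constant on $\cM_{r^\star}$. (b) \emph{Non-degeneracy}: the assumption \eqref{eq:cnd-nondeg} is exactly the strict complementarity condition $-\nabla f(X^\star) \in \tau\,\mathrm{ri}\,\partial\|X^\star\|_*$. Combining (a) and (b) with $R_k \to X^\star$, the identification theorem of \cite{liang2017activity} then yields $R_k \in \cM_{r^\star}$ for all $k$ sufficiently large, i.e.\ $\mathrm{rank}(R_k) = \mathrm{rank}(X^\star)$ eventually.

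The main obstacle I anticipate is bookkeeping the inexactness: the identification result requires not only that $R_k \to X^\star$ but that the associated subgradient $(Z_k - R_{k+1})/(\tau\gamma) \in \partial\|R_{k+1}\|_*$ converges to the ``correct'' subgradient $-\nabla f(X^\star)/\tau$ that sits in the relative interior. The subtlety is that $Z_k$ depends on $Y_k$, which in turn depends on the error-contaminated iterates $X_k$; however, since $e_k \to 0$ (guaranteed by $\sum k\|e_k\| < \infty$ in Proposition~\ref{prop:convergence}) and the inertial overshoot vanishes, the subgradient driving $R_{k+1}$ converges exactly to $-\nabla f(X^\star)/\tau$, so the relative-interior condition survives the passage to the limit and the identification machinery applies without modification.
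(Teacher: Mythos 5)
Your proposal is correct and follows essentially the same route as the paper: both arguments run the Hare--Lewis-type identification machinery (the paper cites \cite[Theorem~5.3]{hare2004identifying} directly, which underlies the result in \cite{liang2017activity} you invoke) on the exact iterate $R_{k+1}=\prox_{\tau\gamma\|\cdot\|_*}(Z_k)$, using partial smoothness of the nuclear norm relative to the fixed-rank manifold, the non-degeneracy condition \eqref{eq:cnd-nondeg}, and the vanishing of the inertial term and of $e_k$ to show the associated subgradients converge correctly (the paper makes this last step explicit via the fixed-point inclusion for $R_{k+1}$ and non-expansiveness of $\Id-\gamma\nabla f$, bounding $\mathrm{dist}(0,\partial(f+\tau g)(R_{k+1}))$ by $\|X_k-X_{k+1}\|+a_k\|X_k-X_{k-1}\|+\|e_k\|$). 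The obstacle you flag at the end is precisely the point the paper's proof addresses, so no gap remains.
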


To prove the result, we need the help of \emph{partly smoothness}, which was first introduced in \cite{Lewis-PartlySmooth}. 
% This concept, as well as that of identifiable surfaces \cite{Wright-IdentSurf}, captures the essential features of the geometry of non-smoothness which are along the so-called active/identifiable manifold. For convex functions, a closely related idea is developed in \cite{Lemarechal-ULagrangian}. 
%For nuclear norm, though it is non-smooth, however along the set of fixed-rank matrices, it is smooth \cite{Lewis-PartlySmooth}. Such a property can be exactly captured by partial smoothness.
%
Let $\mathcal{M}$ be a $C^2$-smooth embedded submanifold of $\mathbb{R}^n$ around a point $X$. To lighten notation, henceforth, we use $C^2$-manifold instead of $C^2$-smooth embedded submanifold of $\mathbb{R}^n$. The natural embedding of a submanifold $\mathcal{M}$ into $\mathbb{R}^n$ permits to define a Riemannian structure on $\mathcal{M}$, and we simply say $\mathcal{M}$ is a Riemannian manifold. $\mathcal{T}_{\mathcal{M}}(X)$ denotes the tangent space to $\mathcal{M}$ at any point near $X$ in $\mathcal{M}$. 

\begin{definition}[Partial smoothness]\label{dfn:psf}
Let $g$ be proper closed and convex,
$g$ is said to be \emph{partly smooth at $X$ relative to a set $\mathcal{M}$} containing $X$ if $\partial g(X) \neq \emptyset$, and the following smoothness, sharpness, and continuity of $g$ at $X$ relative to $\mathcal{M}$ holds:
\begin{itemize}[leftmargin=2.75cm]%[label={\rm (\roman{*})}]
\item[{\textbf{Smoothness}}:] %\label{PS:C2} 
$\mathcal{M}$ is a $C^2$-manifold around $X$, $g$ restricted to $\mathcal{M}$ is $C^2$ around $X$;
\item[{\textbf{Sharpness}}:] %\label{PS:Sharp} 
The tangent space $\mathcal{T}_{\mathcal{M}}(X)$ coincides with $T_{X} = \mathrm{par}\big(\partial g(X) \big)^\perp$;
\item[{\textbf{Continuity}}:] %\label{PS:DiffCont} 
The set-valued mapping $\partial g$ is continuous at $X$ relative to $\mathcal{M}$.  
\end{itemize}
\end{definition}

For nuclear norm, it is partly smooth along the set of fixed-rank matrices \cite{Lewis-PartlySmooth}. 
Other examples of partly smooth functions including $\ell_1$-norm for sparsity, $\ell_{1,2}$-norm for group sparsity, etc; We refer to \cite{liang2017activity} and the references therein for more examples of partly smooth functions. 

% 
%%%%%%%%%%%%%%%%%%%%%%%%%%%%%%%%%%%%%%%%%%%%%%

\begin{proof}[Proof of Theorem \ref{thm:rank_iden}]
Based on the result of \cite[Theorem~5.3]{hare2004identifying}, to prove rank identification, we need the following conditions: let $X^\star$ be a global minimizer,
\begin{itemize}
\item[(i)] $f + \tau g$ is partial smoothness at $X^\star$ relative to $\mathcal{M}_{X^\star} \eqdef \{ X \in \mathbb{R}^{m\times n} : \mathrm{rank}(X) = \mathrm{rank}(X^\star) \}$.
\item[(ii)] $R_k \to X^\star$ and $(f + \tau g)(R_k) \to (f + \tau g)(X^\star)$.
\item[(iii)] Non-degeneracy condition $0 \in \mathrm{ri}\big(  \nabla f(X^\star) + \tau \partial g(X^\star) \big)$ and 
\[
\mathrm{dist}\big(0, \nabla f(R_k) + \tau \partial g(R_k) \big) \to 0 .
\]
\end{itemize}
%
%%\[
%%\begin{aligned}
%%Y_{k} &= X_k + a_k (X_{k} - X_{k-1}) , \\
%%{Z}_k &= Y_k - \gamma \nabla\Psi \Pa{ (\Psi (Y_k) - F)\odot\widetilde{W} }  , \\
%%R_{k+1} &= \prox_{\tau\gamma \|\cdot\|_*} ({Z}_k) , \\
%%X_{k+1} &= %P \mathcal{T}_{\tau\gamma}\big(S + e_k \big)  Q^T = \prox_{\tau\gamma \|\cdot\|_*} ({Z}_k + c_k) , \quad
%%R_{k+1} + e_k  .
%%\end{aligned}
%%\]
%
%Next, we first prove the identification of $R_{k+1}$ in \eqref{eq:inexact_ifb} by varying the above conditions, and then prove the identification property of $X_{k+1}$. 
Next we prove the identification of $R_{k}$ in \eqref{eq:inexact_ifb} by varying the above conditions. 
\begin{itemize}
\item[--]  
Since $f$ locally is $C^2$-smooth around $X^\star$, the smooth perturbation rule of partly smooth functions \cite[Corollary 4.7]{Lewis-PartlySmooth}, ensures that $f + \tau g$ is partial smoothness at $X^\star$ relative to $\mathcal{M}_{X^\star} \eqdef \{ X \in \mathbb{R}^{m\times n} : \mathrm{rank}(X) = \mathrm{rank}(X^\star) \}$.

\item[--]
By assumption, sequences $R_{k}, {X_{k}}$ of \eqref{eq:inexact_ifb} converge to $X^\star \in \Arg\min\pa{f+\tau g}$. 

\item[--]
The non-degeneracy condition \eqref{eq:cnd-nondeg} is equivalent to $0 \in \mathrm{ri}\big(\partial ( (f+\tau g)(X^\star) )\big)$. 
For $R_{k+1}$, \eqref{eq:Rk} is equivalent to
\[
\begin{aligned}
& Z_k - R_{k+1} \in \gamma\tau  \partial g(R_{k+1}) \\
\Leftrightarrow~~ & Y_k - \gamma \nabla f(Y_k) - R_{k+1} \in \gamma\tau  \partial g(R_{k+1}) \\
\Leftrightarrow~~ & \Pa{ Y_k - \gamma\nabla f(Y_{k}) } - \Pa{ R_{k+1} - \gamma\nabla f(R_{k+1}) }  \in \gamma \partial\pa{f + \tau g}(R_{k+1}) . 
\end{aligned}    
\]
By Baillon-Haddad theorem~\cite{baillon1977quelques}, $\mathrm{Id} - \gamma \nabla f$ is non-expansive, whence we get
\[
\begin{aligned} 
\mathrm{dist}\Pa{0, \partial (f+\tau g)(R_{k+1})}
&\leq \| (\mathrm{Id}-\gamma\nabla f )(Y_{k}) -(\mathrm{Id}-\gamma\nabla f )(R_{k+1}) \|  \\
&\leq \|Y_{k} - R_{k+1} \| \\ 
&\leq \|X_{k} - X_{k+1} \| + a_k \|X_{k}-X_{k-1} \| + \|e_{k} \|   .
\end{aligned}
\]
Since $X_{k}$ is convergent and $\|e_k\| \to 0$, we have $\mathrm{dist}\Pa{0, \partial (f+\tau g)(R_{k+1})} \to 0$.

\item[--]Owing to our assumptions, $f+\tau g$ is sub-differentially continuous at every point in its domain, and in particular at $X^\star$ for $0$, which in turn entails $(f+\tau g)(R_{k}) \to (f+\tau g)(X^\star)$. 

\end{itemize}
Altogether, the above conditions (i)-(iii) are fulfilled, and the rank identification of $R_{k}$ follows. 
\end{proof}
To demonstrate the rank identification property of Algorithm \ref{alg:alg}, the following low-rank recovery problem is considered as an illustration: 
\[
\min_{X \in \mathbb{R}^{100\times 100}} \sfrac{1}{2}\|(\Psi (X) - F)\odot W\|^2+\tau\|X\|_{*} ,
\]
where we have $\Psi \in \mathbb{R}^{2352 \times 10000}$ and 
\[
F = \Psi \mathrm{vec}(\cX) + \varepsilon 
\]
with $\mathrm{rank}(\cX) = 4$ and $\varepsilon$ being random Gaussian noise. Moreover, we choose $\tau = 2\|\varepsilon\|$.

The problem is solved with \program~with $a_k\equiv0, I=1$ and standard proximal gradient descent (PGD) \cite{lions1979splitting}. The step-sizes of both methods are set as $1/L$. The observation is shown below in Figure \ref{Fig:rank_iden}. 
We observed that, both schemes have rank identification property, as the rank of $X_k$ for both schemes eventually becomes constant. Note that, \program~has slower rank identification than PGD, which is caused by the inner loop error. 
%\end{example}

\begin{figure}[!t]
\centering
\includegraphics[width=0.5\textwidth]{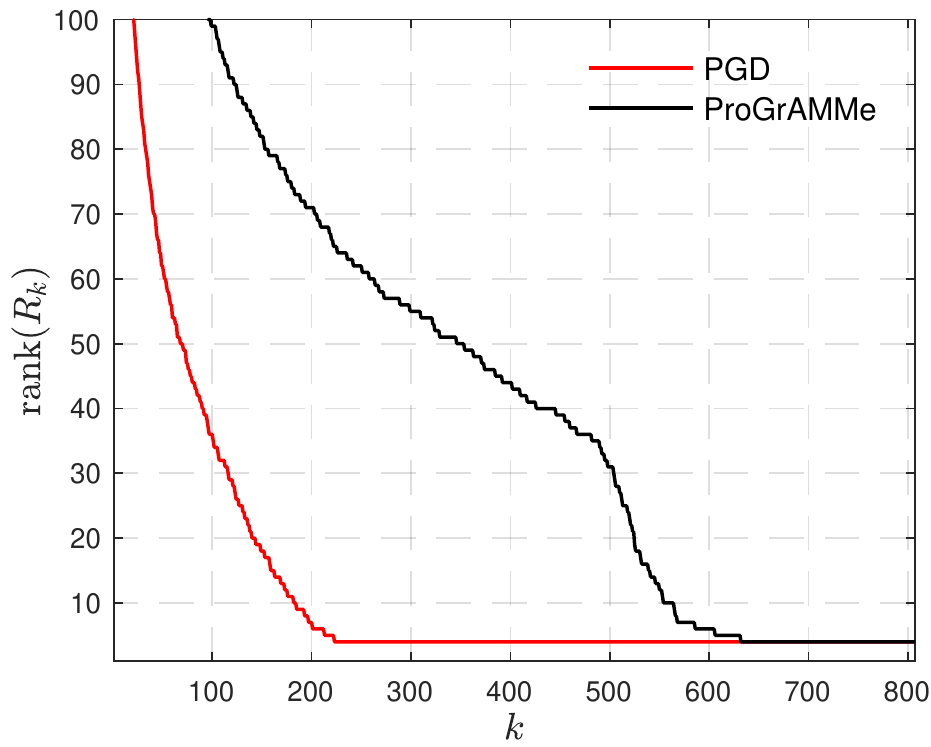}\\[-1mm]
\caption{Rank identification of Algorithm \ref{alg:alg}.}\label{Fig:rank_iden}
\end{figure}

\subsection{Rank continuation}

As we remarked earlier that the choice of $r$ in Algorithm \ref{alg:alg} is crucial to its practical performance. 
To overcome the difficulty of a tight estimation of the rank of minimizers. In this section, we introduce a rank continuation strategy, see Algorithm \ref{alg:alg_rc}, which adaptively adjusts the rank of the output and asymptotically attains the optimal per iteration complexity.

%\begin{algorithm}[!ht]
%	\SetAlgoLined
%	\SetKwInOut{Input}{Input}
%	\SetKwInOut{Output}{Output}
% \SetKwInOut{Init}{Initialize}
% \SetKwInOut{Compute}{Compute}
%\nl\Input{Data matrix $A \in \mathbb{R}^{m \times n}$, weight matrix $W \in \mathbb{R}_{+}^{m \times n}$; Choose $\tau >0$, $r > 0$ and $I \in \mathbb{N}_{+}$\; \jingwei{Care about the dimension, which should be modified from the introduction section...}}
%	 \nl\Compute {$\widetilde{W}=W\odot W, ~L$; Let $\gamma \in ]0, 2/L[$\;}
%	 \nl \While{not converged}
%	{
%		%\nl $\nabla f = -(F-X_k)\odot \widetilde{W}$\;
%		\nl $Y_{k} = X_k + a_k (X_{k} - X_{k-1}) $\;
%		\nl ${Z}_{k} = Y_{k} - \gamma \Psi^T \Pa{ (\Psi Y_k - F)\odot \widetilde{W} }$\;
%		\nl Initialize $U_0\in\R^{m\times r}, V_0\in\R^{r\times n}$\;
%		\nl \For{$i=0,...,I-1$}
%		{
%		\nl $U_{i+1}={{Z}_k}V_{i}^\top \Pa{ V_{i}V_{i}^\top+ {\tau\gamma}\Id_r }^{-1}$\;
%	    \nl $V_{i+1}=\Pa{ U_{i+1}^\top U_{i+1}+{\tau\gamma}\Id_r }^{-1}U_{i+1}^\top{{Z}_k}$\;
%	    }
%	    \nl $X_{k+1}=U_{I}V_{I}$\;
%		\nl \textcolor{blue}{$r=\mathrm{rank}(U_{I})$}\;
%	}
%	\BlankLine
%	\nl \Output{$X_{k+1}$}
%	\caption{\program~with rank continuation}\label{alg:alg_rc}
%\end{algorithm}

\begin{center}
	\begin{minipage}{0.975\linewidth}
		\begin{algorithm}[H]
\caption{\program~with Rank Continuation}\label{alg:alg_rc}
% \caption{A Proximal Gradient and Alternating Minimization Method with Rank Continuation}\label{alg:alg_rc}
\begin{algorithmic}[1]
%\STATE{Data matrix $F$, weight matrix $W $; Choose $\tau >0$, $r > 0$ and $I \in \mathbb{N}_{+}$\;}
\STATE{Compute $\widetilde{W}=W\odot W, ~L$ and et $\gamma \in ]0, 2/L[$; Choose $r > 0$ and $I \in \mathbb{N}_{+}$;}
\WHILE{not convergent}
\STATE{$Y_{k} = X_k + a_k (X_{k} - X_{k-1}) $,\hfill $//$\textcolor{blue}{\tt inertial step}}$//$~~~~~
\STATE{${Z}_{k} = Y_{k} - \gamma \nabla \Psi(Y_k) \Pa{ (\Psi (Y_k) - F)\odot \widetilde{W} }$,\hfill $//$\textcolor{blue}{\tt gradient descent}}$//$~~~~~
\STATE{Initialize $U_0\in\R^{m\times r}, V_0\in\R^{r\times n}$,}
%
% \FOR{$i=0,...,I-1$} 
\STATE{{\bf for} $i=1,...,I-1$ {\bf do}\hfill $//$\textcolor{blue}{\tt inner loop}}$//$~~~~~
\STATE{\qquad$U_{i+1}={{Z}_k}V_{i}^\top \pa{ V_{i}V_{i}^\top+ {\tau\gamma}\Id_r }^{-1}$,}
\STATE{\qquad$V_{i+1}=\pa{ U_{i+1}^\top U_{i+1}+{\tau\gamma}\Id_r }^{-1}U_{i+1}^\top{{Z}_k}$,}
\STATE{{\bf end for}}
% \ENDFOR
%
\STATE{$X_{k+1}=U_{I}V_{I}$,}
\STATE{\textcolor{blue}{$r=\mathrm{rank}(U_{I})$}. \hfill $//$\textcolor{blue}{\tt rank continuation}}$//$~~~~~
\ENDWHILE
\RETURN $X_{k+1}$
\end{algorithmic}
\end{algorithm}
	\end{minipage}
\end{center}

% \jingwei{Test inertial for the inner loop...} 

\begin{remark}
In {\tt Line 11}, instead of using $\mathrm{rank}(X_{k+1})$ to update $r$, we choose to use $\mathrm{rank}(U_I)$, since $\mathrm{rank}(X_{k+1}) \leq \min\{ \mathrm{rank}(U_I), \mathrm{rank}(V_I) \}$ and it is less computational demanding to evaluate the rank of $U_I$ than that of $X_{k+1}$. 
\end{remark}

%{\color{blue}
\begin{remark}
Note that in Theorem \ref{thm:rank_iden}, we only have rank identification property for $R_k$ and not for $X_k$. However, practically this is not an issue since we can start the rank continuation late enough such that $e_k$ is small enough and $\mathrm{rank}(X_{k+1}) \geq \mathrm{rank}(R_{k+1})$.
\end{remark}
%}

\begin{remark}
Though the initial value of $r$ is no longer as important as that of Algorithm \ref{alg:alg} whose $r$ is fixed, it is still beneficial to have a relatively good estimate of $\mathrm{rank}(X^\star)$ as it can further reduce the computational cost of the algorithm. % See our numerical example below in Figure \ref{fig:comp_rc_1}. 
Also, it is not desirable to compute $\mathrm{rank}(U_I)$ every iteration and a practical approach is to do it every certain number of steps. 
\end{remark}

\begin{remark}
In \cite{JMLR:v11:mazumder10a}, Mazumder et al. showed that their SOFT-Impute algorithm lies in the two-dimensional maximum margin matrix factorization (MMMF) algorithm family \cite{NIPS2004_e0688d13}. That is, for each given maximum rank, SOFT-IMPUTE performs rank reduction and shrinkage simultaneously. However, we note that, our rank continuation strategy is different, because, we use a general weight, and perform an alternating minimization under the framework of accelerated proximal gradient. 
\end{remark}

%\begin{example}[label={ex:example2},listing only]{Comparison}
To illustrate the performance of rank continuation, we consider a low-rank recovery problem with randomly missing entries: 
\begin{equation}\label{eq:mc}
\min_{X \in \mathbb{R}^{2000\times 2000}} \sfrac{1}{2}\|(\Psi \odot X - F)\odot W\|^2+\tau\|X\|_{*} ,
\end{equation}
where $\Psi \in \mathbb{R}^{2000 \times 2000}$ is a random binary mask with $50\%$ entries equal to $0$ and 
\[
F = \Psi \odot \cX + \varepsilon 
\]
with $\mathrm{rank}(\cX) = 10$ and $\varepsilon$ being random Gaussian noise. For this case we set $\tau = \|\varepsilon\|$. 

%\begin{table}[tbhp]
%{\footnotesize
%  \caption{Example table}\label{tab:simpletable}
%\begin{center}
%  \begin{tabular}{|c|c|c|} \hline
%   Species & \bf Mean & \bf Std.~Dev. \\ \hline
%    1 & 3.4 & 1.2 \\
%    2 & 5.4 & 0.6 \\ \hline
%  \end{tabular}
%\end{center}
%}
%\end{table}

\paragraph{Comparison to SVD based PGD/FISTA.}
We first compare the performances of PGD/FISTA, Algorithm \ref{alg:alg} (\program) and Algorithm \ref{alg:alg_rc} (\program-RC), with the following settings:
\begin{itemize}
\item 
The step-size $\gamma$ for all these schemes are chosen as $\gamma = 1/L$. Note that this choice of $\gamma$ exceeds the upper bound of step-size of FISTA, however for this example FISTA converges. 
\item 
For FISTA, we update $a_k$ using $a_k = \frac{k-1}{k+20}$ as proposed in \cite{liang2018improving} which   provides faster performance than that of the standard FISTA scheme. 
\item 
For both Algorithm \ref{alg:alg} (\program) and Algorithm \ref{alg:alg_rc} (\program-RC), we choose $I = 1$ and $a_k \equiv 0$. We set $r=1,000$ for \program~which is also the initial value of $r$ for Algorithm \ref{alg:alg_rc}.
\end{itemize} 
All schemes are stopped when the relative error $\|X_{k}-X_{k-1}\|$ reaches $10^{-10}$, and we note the the average wall clock time of $10$ runs for these schemes as:
\[
\begin{matrix}
\hline
 \textrm{Schemes} & \textrm{PGD} & \textrm{FISTA} & \textrm{\program} & \textrm{\program-RC} \\
\hline
\textrm{Time (seconds)} & 276.3 & 198.3 & 47.9 & 34.2 \\
\hline
\end{matrix}
\]
As illustrated in Figure \ref{fig:comp_rc_1}~(a), we observe: 
\begin{itemize}
\item PGD/FISTA is much slower than Algorithm \ref{alg:alg} and Algorithm \ref{alg:alg_rc}. In particular, the rank continuation scheme is about an order faster than PGD/FISTA. 
\item  
%For both algorithms, the inertial version, \ie $a_k = \frac{1}{4}$ is slightly faster than the non-inertial one. 
\program-RC is about $30\%$ faster than \program~which indicates the advantage of rank continuation under the considered setting. 
\end{itemize}
For the {\it magenta} line in Figure \ref{fig:comp_rc_1}~(a) for \program-RC, it has several jumps which is due to the update of $r$.

%\begin{remark}
%For FISTA scheme, one can also consider the adaptive restarted scheme of \cite{o2015adaptive}, which can further improve the performance of FISTA. However, as long as SVD is involved, it cannot compete with \program~for the considered relative large-scale problem. 
%\end{remark}

\begin{figure}[!t]
\centering
\subfloat[Comparison against PGD/FISTA]{ \includegraphics[width=0.45\linewidth]{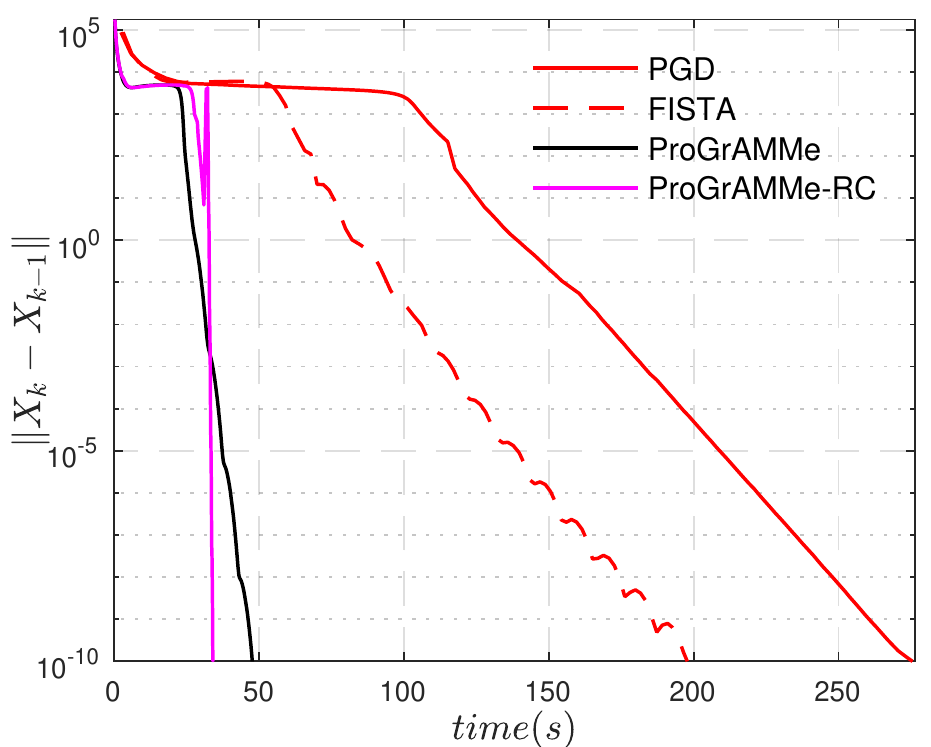} }  \hspace{2pt}
\subfloat[Different initial values of $r$]{ \includegraphics[width=0.45\linewidth]{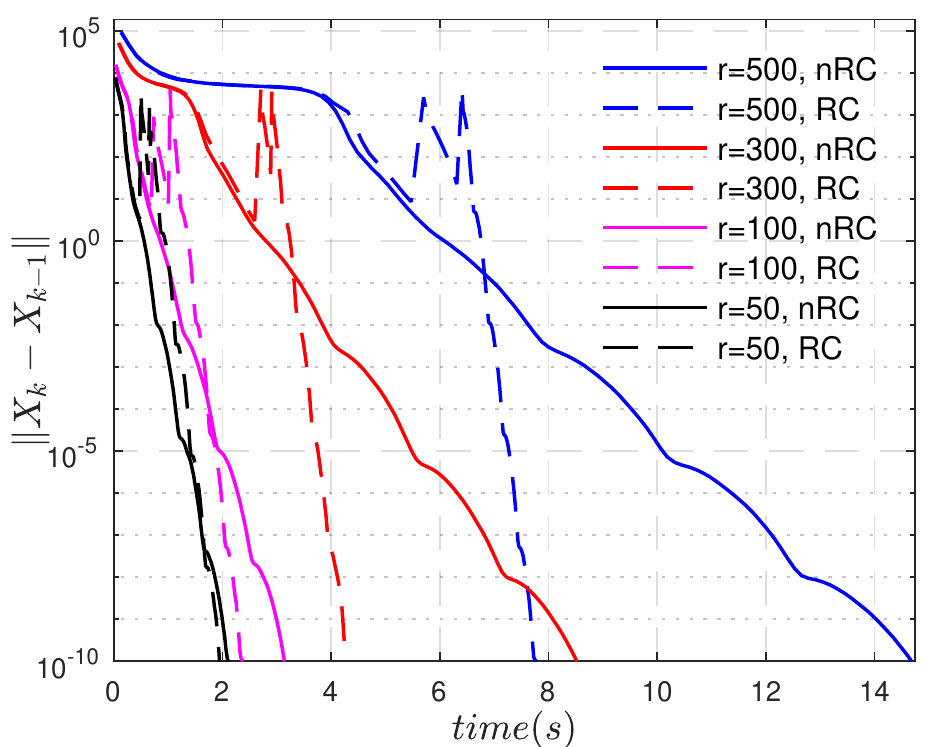} }  \\[-2mm]
%%%%%%%%
\caption{Comparison of PGD/FISTA, Algorithm \ref{alg:alg} and Algorithm \ref{alg:alg_rc}. (a) Comparison against PGD/FISTA; (b) Comparison between rank continuation and no continuation. Note that the lines of rank continuation schemes have several jumps which is due to the update of $r$.}
\label{fig:comp_rc_1}
\end{figure}

\paragraph{Effect of different starting rank.}
To further understand the advantage of rank continuation over the static one, we conduct a comparison of Algorithm \ref{alg:alg_rc} under different initial values for $r$. Precisely, we consider
\begin{itemize}
\item 
four different values of $r = 500, 300, 100, 50$. 
\item $r$ is updated every $10$ steps.
\end{itemize}
We use ``nRC'' to denote \program~{\it without} rank continuation and ``RC'' {\it with} rank continuation, and the result is shown in Figure \ref{fig:comp_rc_1}~(b).~We observe
\begin{itemize}
\item Without rank continuation, the smaller the value of $r$, the better the performance of \program.
\item For $r=500, 300$, the red and black lines in the figure, rank continuation (dashed lines) shows clear advantage over the standard scheme (solid lines).
\item While for $r=100,50$, rank continuation actually becomes slower than the static scheme, and the extra time is mainly the overhead of computing $\mathrm{rank}(U_I)$. 
\end{itemize}
From the above observations, we conclude:
\begin{itemize}
\item For problems where a tight estimation of the rank of the solution can be obtained, one can simply consider Algorithm \ref{alg:alg}; 
\item When the rank of solutions is difficult to estimate, then rank continuation can be applied to achieve acceleration. 
\end{itemize}
%\end{example}
We leave the comparison of \program~with inertial to the next section.

\section{Numerical experiments}\label{sec:results}

To understand the effects of inertial acceleration, validate the strengths and flexibility of our recovery model and algorithm, in this section we perform numerical experiments on several low-rank recovery problems.~Throughout this section, we typically use two different versions of \program---\myNum{i} \program-$1$ which terminates the inner loop in each iteration and \myNum{ii} \program-$\epsilon$ which terminates the inner loop when the relative error of the inner iterates reach an $\epsilon$ precision or maximum inner iteration is achieved, whichever occurs first. Throughout the section, for \program-$\epsilon$, we use $\epsilon=10^{-4}$ and maximum number of inner iteration is set to 20, unless otherwise specified.

\subsection{Effects of inertial acceleration}

We continue the matrix completion problem \eqref{eq:mc} to study the effect of inertial acceleration. Both Algorithm \ref{alg:alg} and \ref{alg:alg_rc} are tested, the setting of the tests are
\begin{itemize}
\item 
For both algorithms, we initialize $r$ with value of $500$; In terms of step-size, we keep the previous choice which is $\gamma = 1/L$.
\item 
In total, $5$ different choices of inertial parameter $a_k$ are considered
\[
a_k \equiv 0 ,\enskip
a_k \equiv \sfrac{1}{4} ,\enskip
a_k \equiv \sfrac{1}{2} ,\enskip
a_k \equiv \sfrac{3}{4} \quad\mathrm{and}\quad
a_k = \sfrac{k-1}{k+20} .
\]
\end{itemize}
The results are shown in Figure \ref{fig:cmp_inertial}, whose left figure is the comparison of Algorithm \ref{alg:alg} without rank continuation
\begin{itemize}
\item 
In general, relatively small inertial parameters ($a_k \equiv \frac{1}{4}, \frac{1}{2}$) provide acceleration inertial schemes. %For example, $a_k\equiv\frac{3}{4}$ is about $30\%$ faster than $a_k\equiv0$.
\item 
For $a_k \equiv \frac{3}{4}$ and $a_k = \sfrac{k-1}{k+20}$ are slower than the case $a_k\equiv0$. 
\end{itemize}
The above observations are quite similar to the comparison of inertial schemes to proximal gradient schemes \cite{liang2017activity}. 
The comparison for rank continuation scheme is provided in Figure \ref{fig:cmp_inertial}~(b), where similar observation as above can be obtained, except that for this case $a_k \equiv \frac{3}{4}$ and $a_k = \sfrac{k-1}{k+20}$ are faster than $a_k\equiv0$. 

\begin{figure}[!t]
\centering
\subfloat[No rank continuation]{ \includegraphics[width=0.45\linewidth]{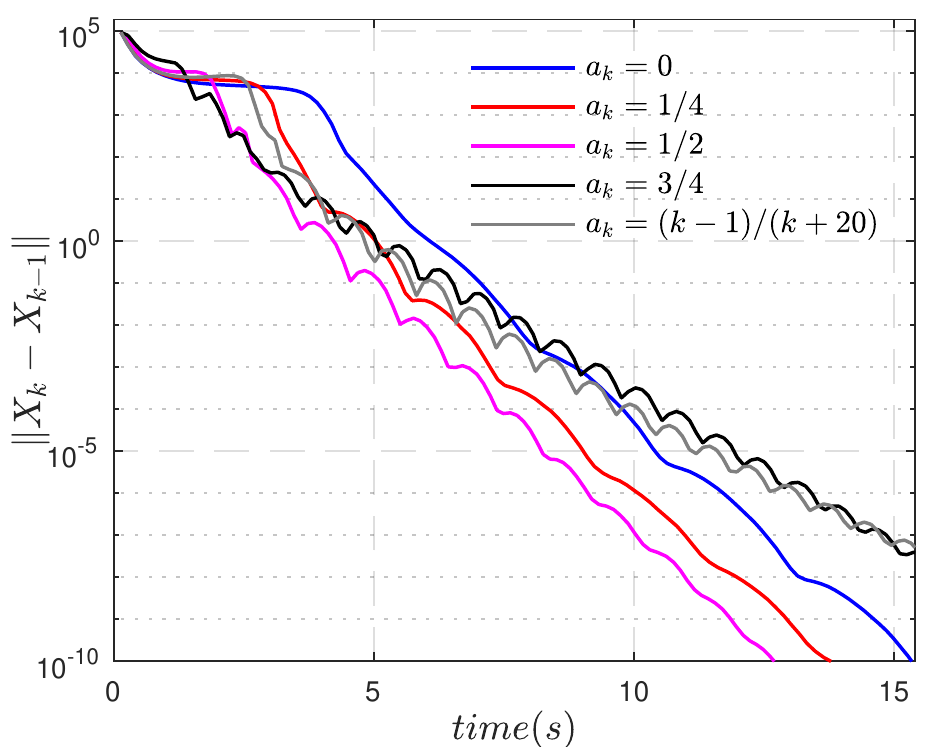} }  \hspace{2pt}
\subfloat[Rank continuation]{ \includegraphics[width=0.45\linewidth]{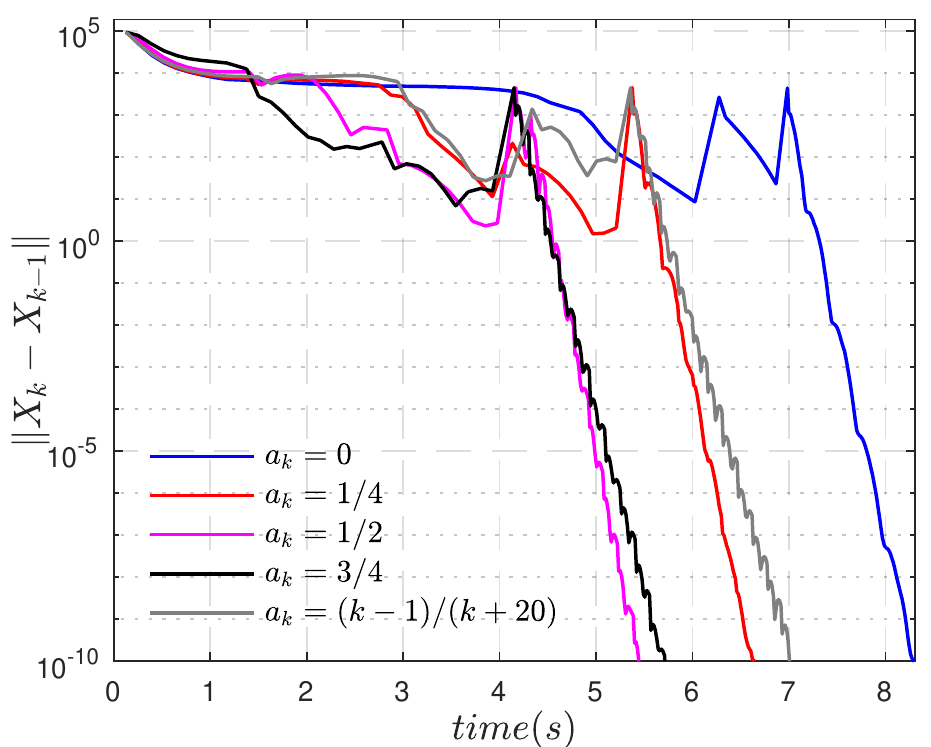} }  \\[-1mm]
%%%%%%%%
\caption{Effects of inertia:~(a) No rank continuation; (b) Rank continuation.}
\label{fig:cmp_inertial}
\end{figure}

\begin{figure}[!ht]
\centering
\subfloat[Random noise]{ \includegraphics[width=0.75\textwidth]{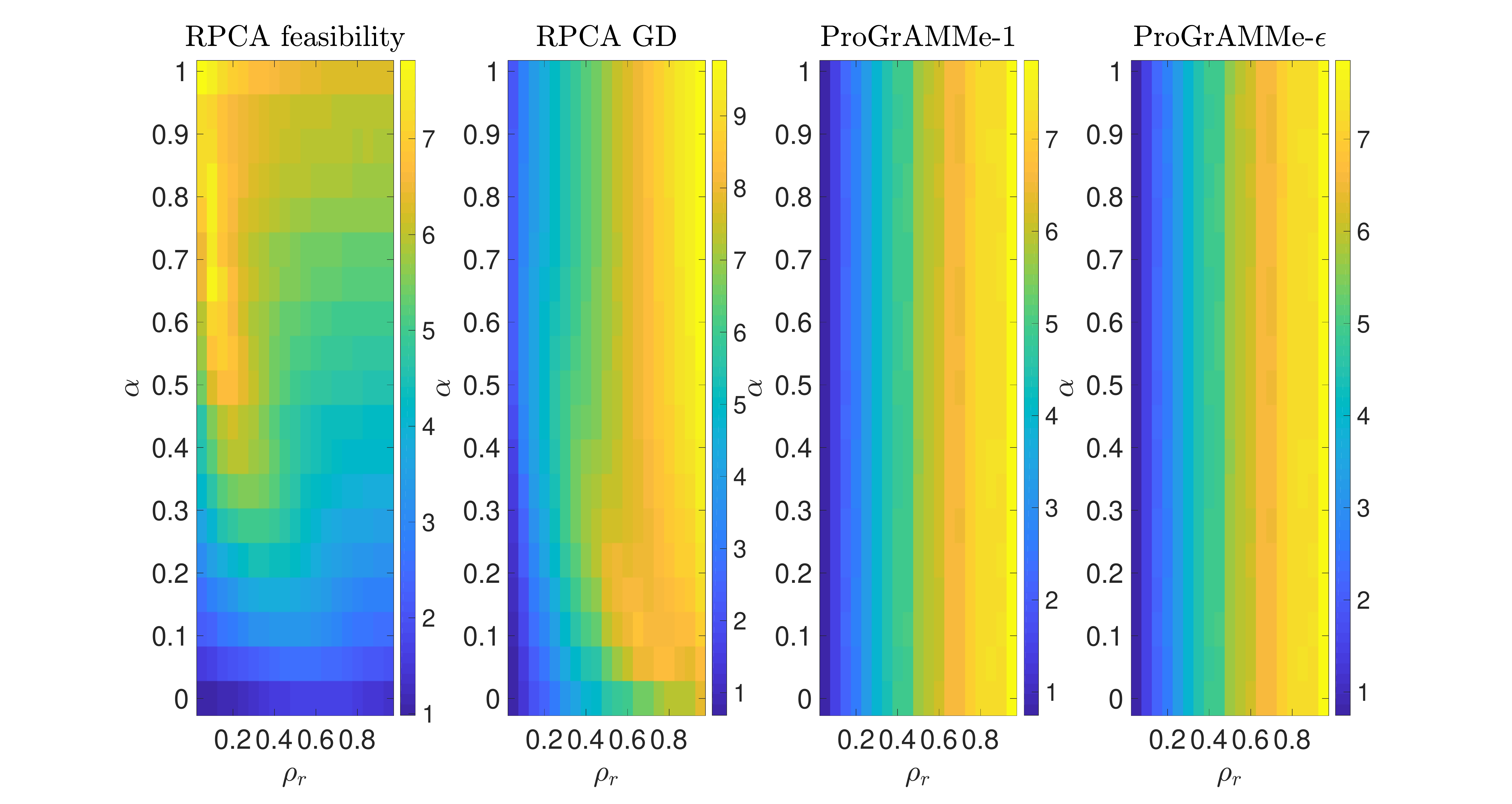} } \\[-0.5pt]
\subfloat[Sparse large noise]{ \includegraphics[width=0.75\textwidth]{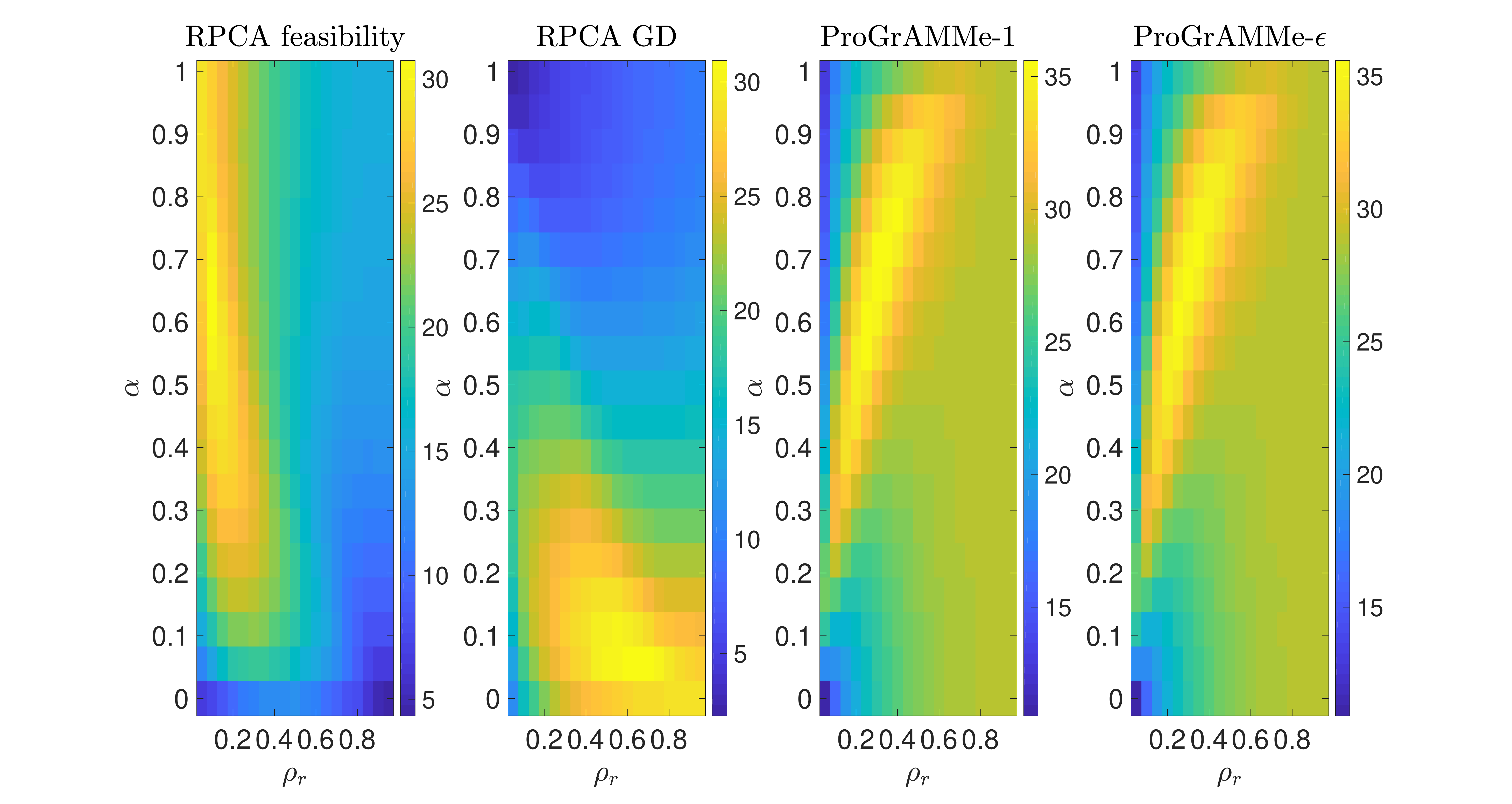} } \\[-2mm]
    %%%%%%%%
\caption{{RMSE: $\|F-X\|/\sqrt{mn}$ for different methods. Top row is for random noise, bottom row is for sparse large noise. For both cases, $\min_{i,j}W_{ij}=5,\max_{i,j}W_{ij}=10.$}}\label{RMSE}
\end{figure}

%\begin{figure}[!ht]
%\centering
%\begin{minipage}{\textwidth}
%    \centering
%    \includegraphics[width=\textwidth,height=1.65in]{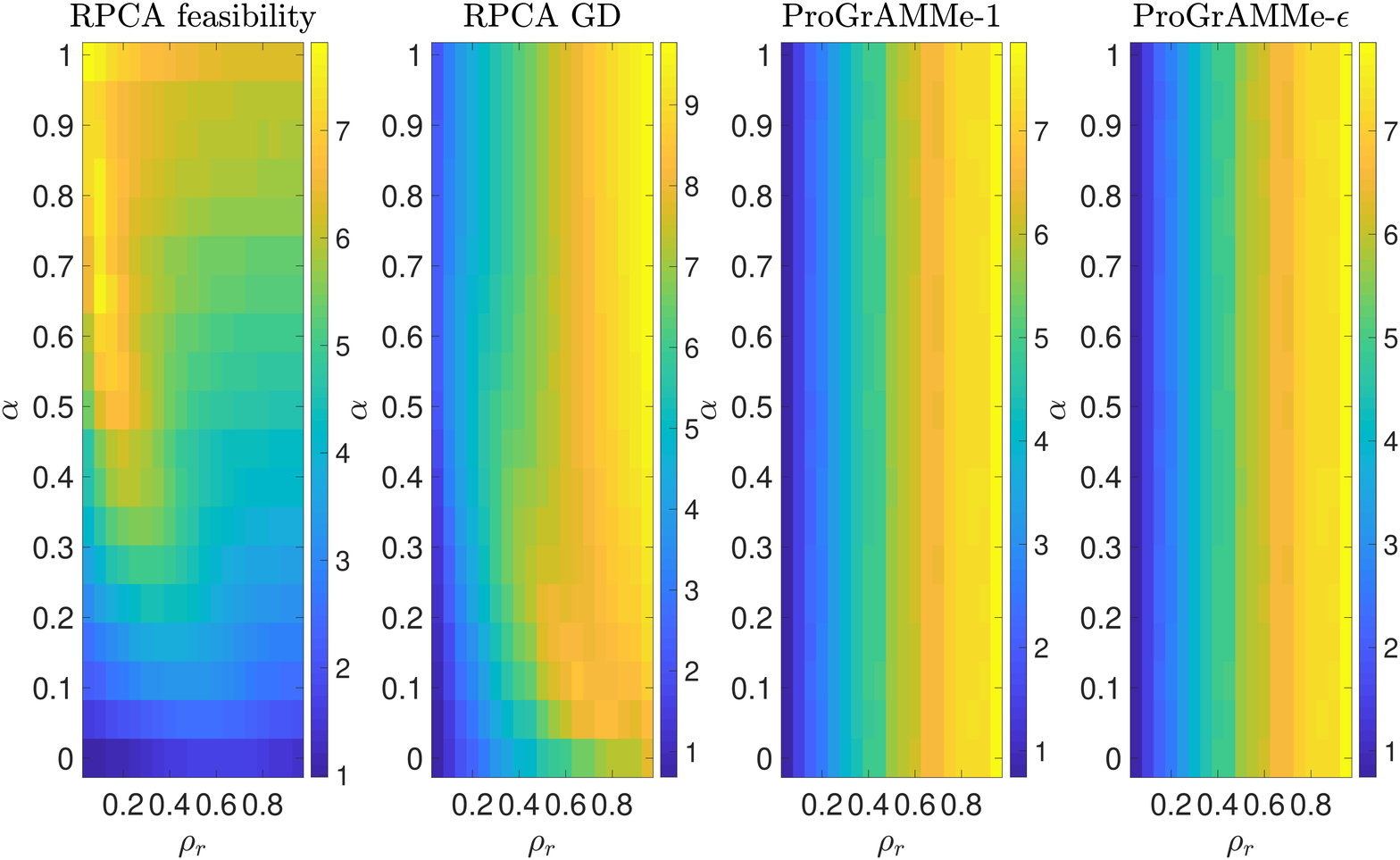}
%\end{minipage}\\
%\begin{minipage}{\textwidth}
%    \centering
%    \includegraphics[width=\textwidth, height=1.65in]{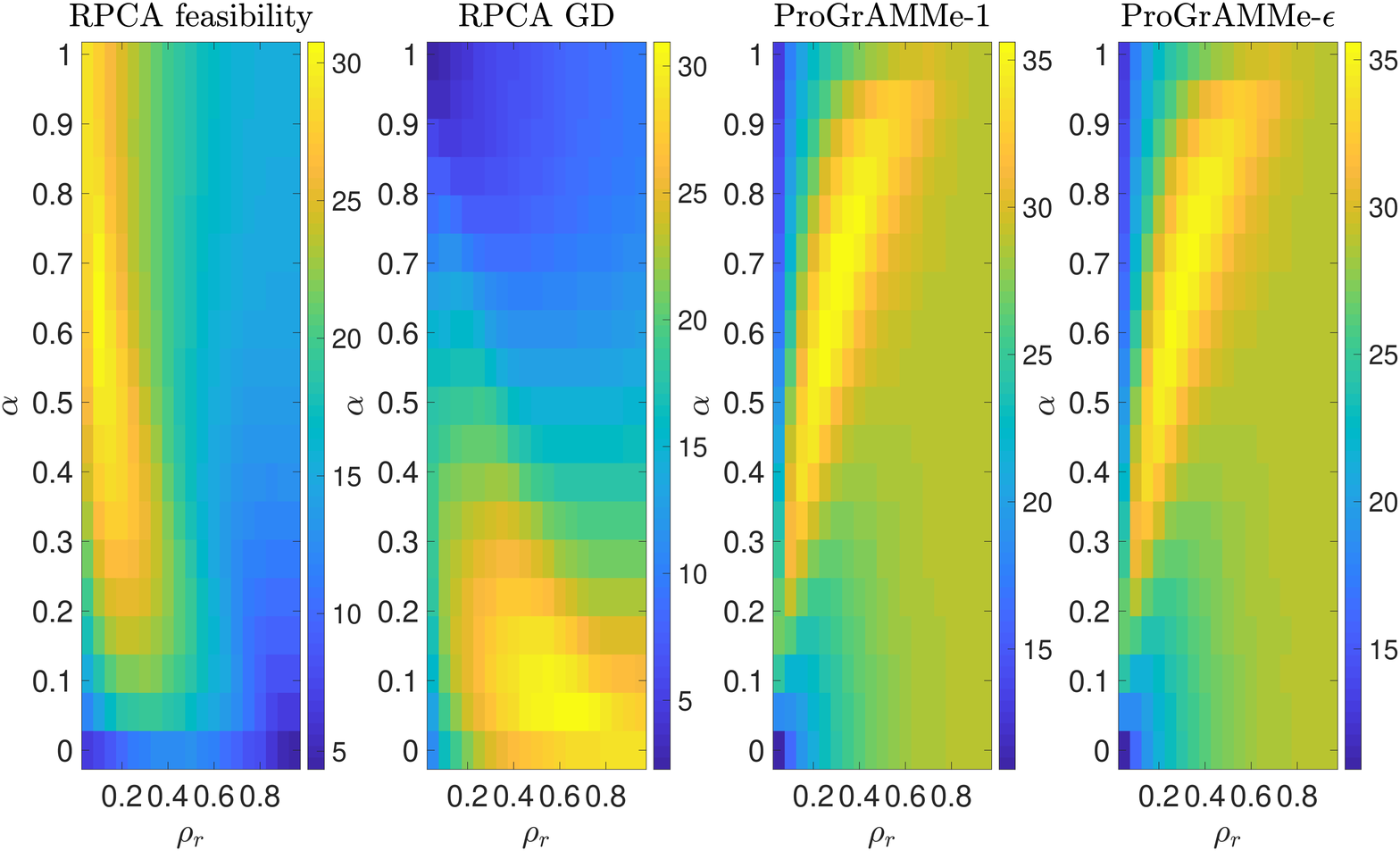}
%\end{minipage}\\[-1mm]
%%% \begin{minipage}{0.3\textwidth}
%%%     \centering
%%%     \includegraphics[width=\linewidth]{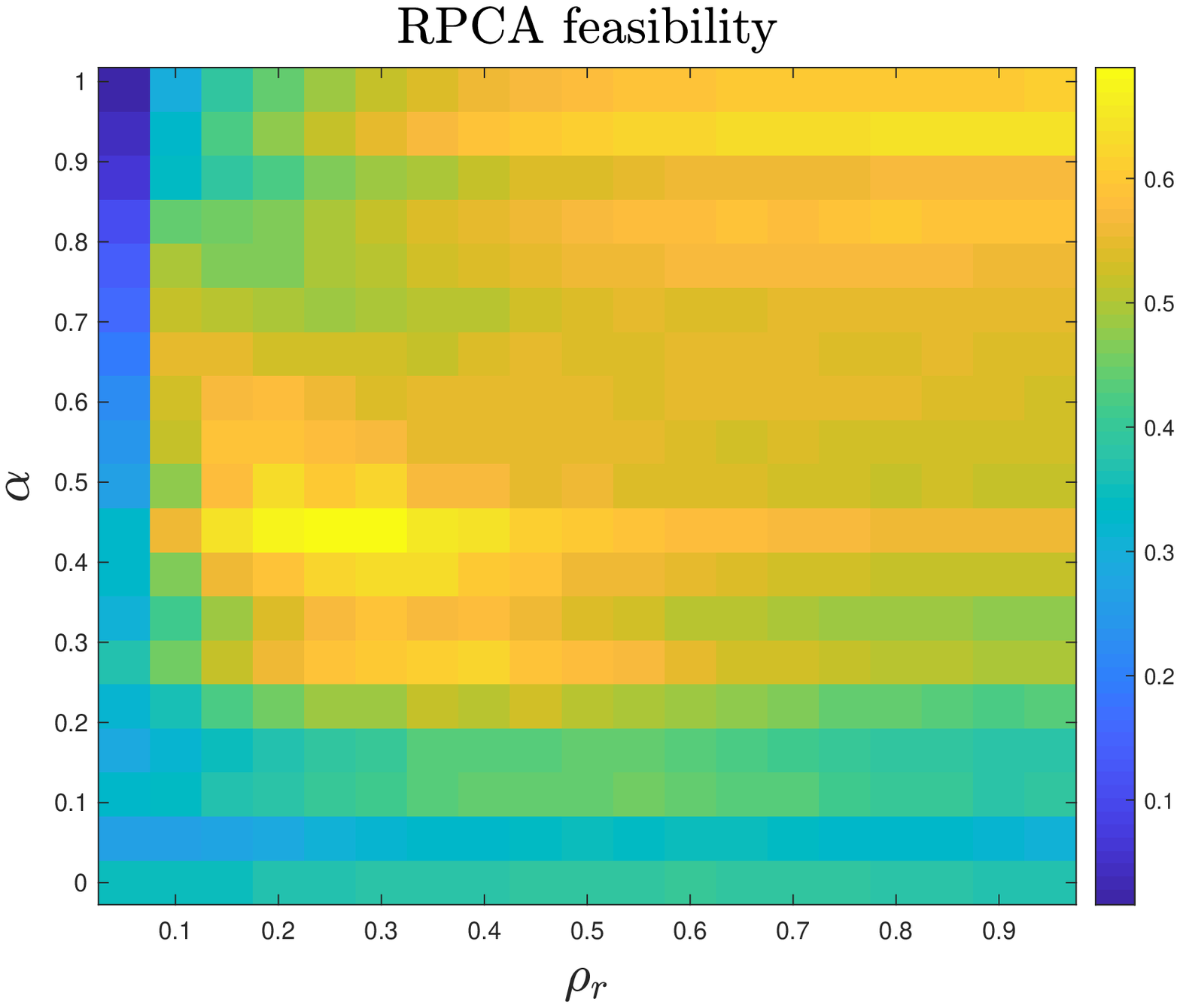}
%%% \end{minipage}
%%% \begin{minipage}{0.3\textwidth}
%%%     \centering
%%%     \includegraphics[width=\linewidth]{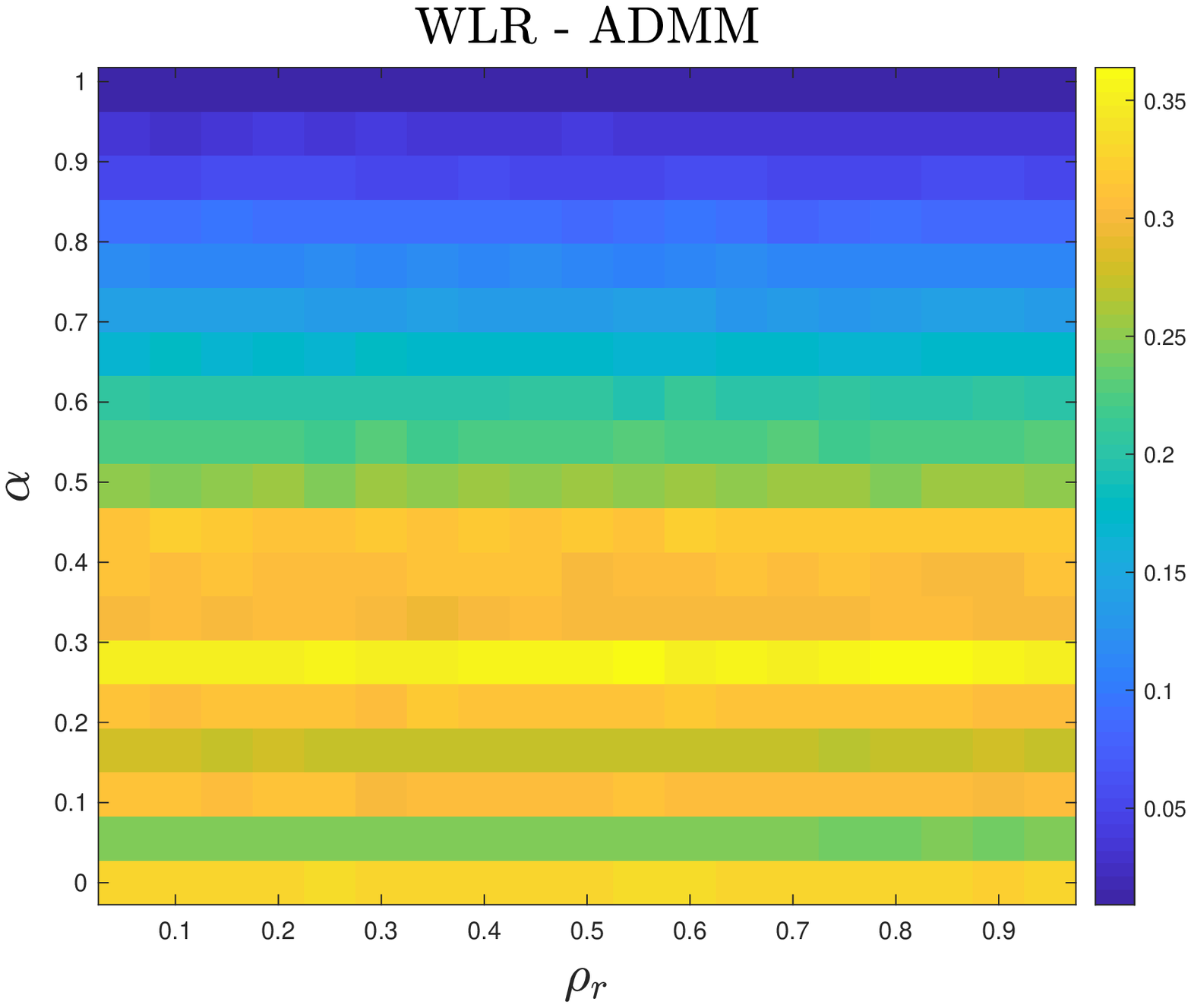}
%%% \end{minipage}\\
%%% \begin{minipage}{0.3\textwidth}
%%%     \centering
%%%     \includegraphics[width=\linewidth]{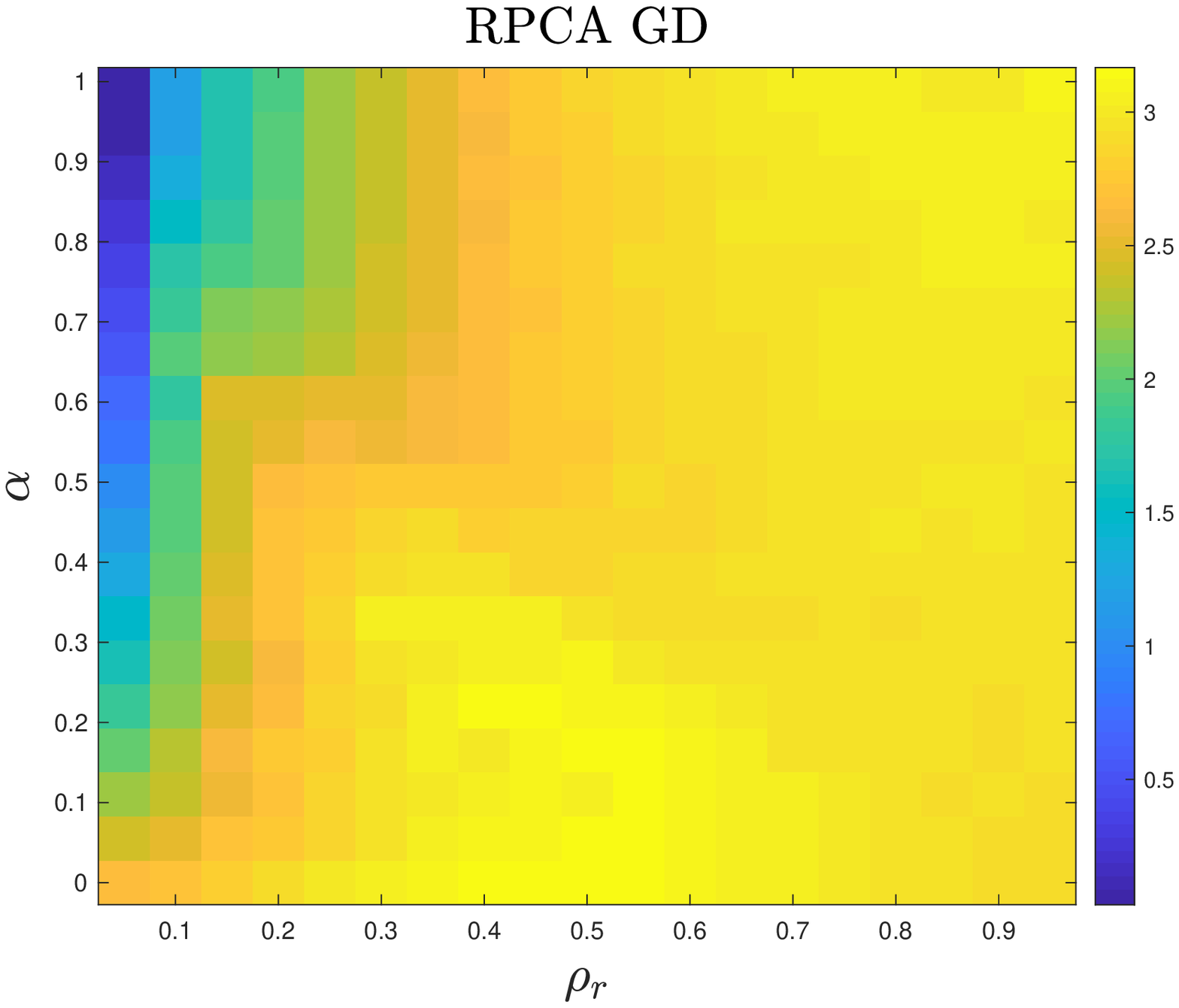}
%%% \end{minipage}
%%% \begin{minipage}{0.3\textwidth}
%%%     \centering
%%%     \includegraphics[width=\linewidth]{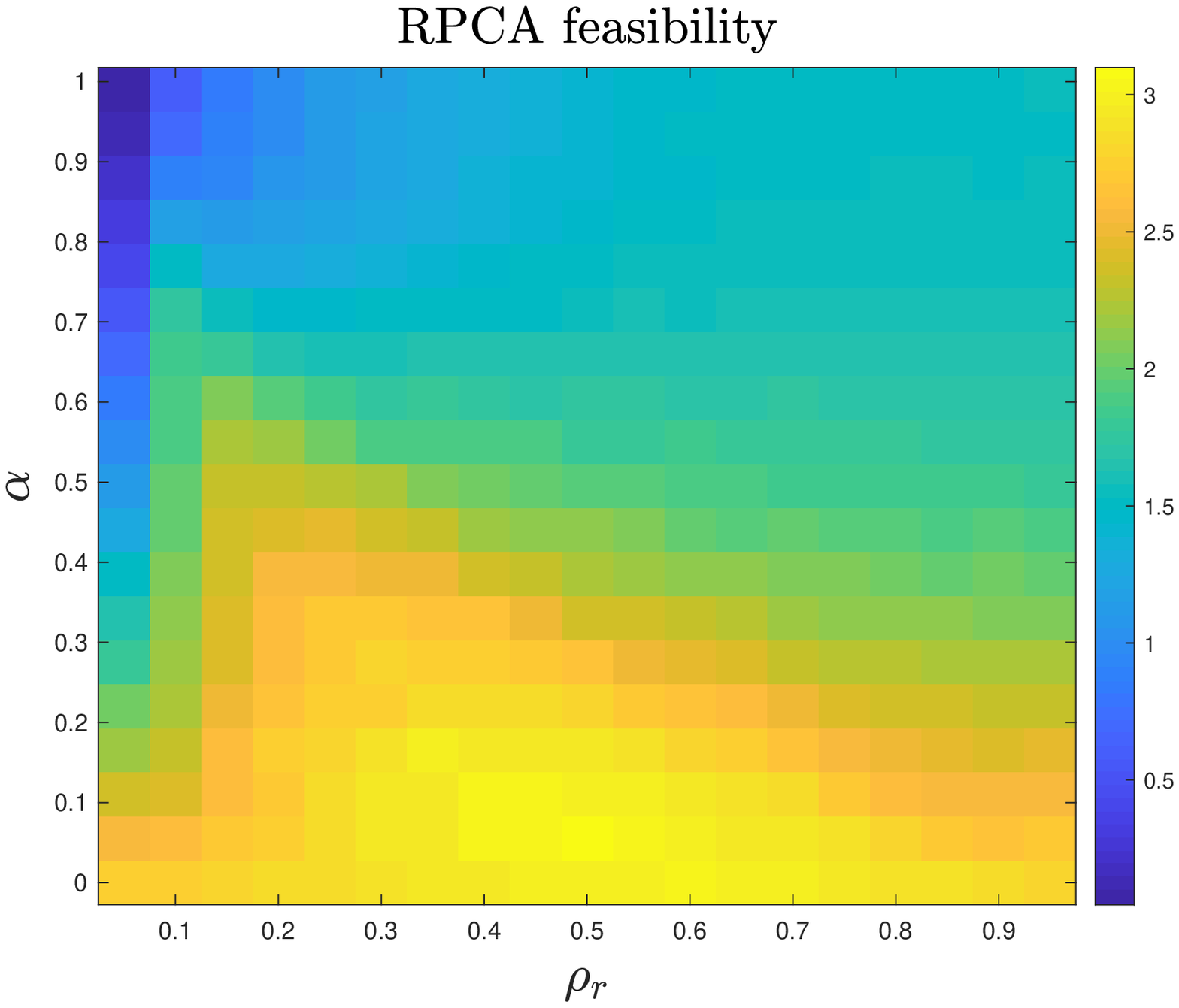}
%%% \end{minipage}
%%% \begin{minipage}{0.3\textwidth}
%%%     \centering
%%%     \includegraphics[width=\linewidth]{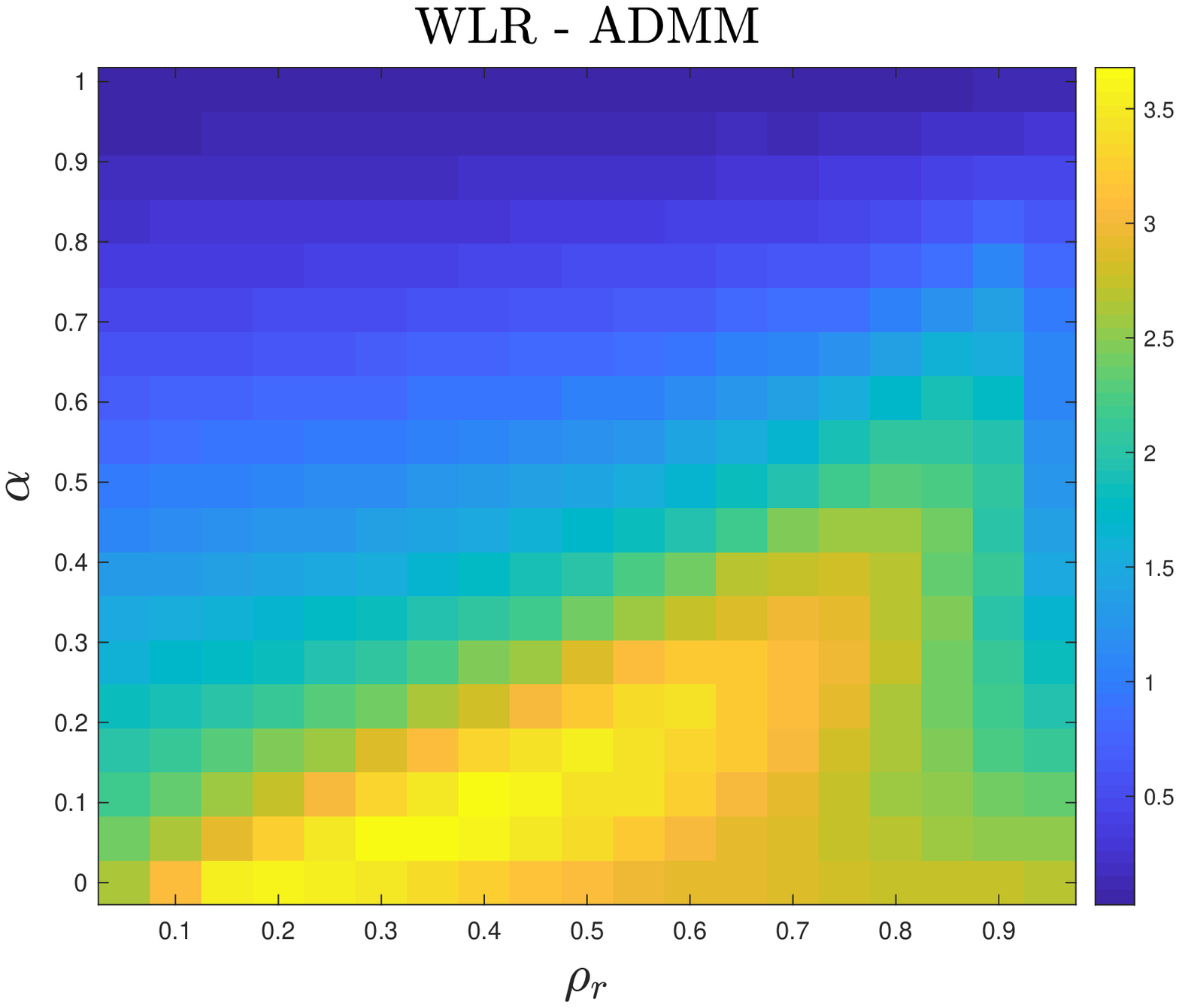}
%%% \end{minipage}
%\caption{{RMSE: $\|F-X\|/\sqrt{mn}$ for different methods. Top row is for random noise, bottom row is for sparse large noise. For both cases, $\min_{i,j}W_{ij}=5,\max_{i,j}W_{ij}=10.$}}\label{RMSE}
%\end{figure}

\subsection{Low-rank recovery experiments on synthetic data}
For these experiments, we generate the low-rank matrix, $L$, as a product of two independent full-rank matrices of size $m\times r$ with $r<m$ such that elements are independent and identically distributed (i.i.d.) and sampled from a normal distribution---$\cN(0,1)$.~We used two different types of sparse noise---Gaussian noise and arbitrary large noise.~For each case, we generate the sparse matrix, $S$, such that for a sparsity level $\alpha\in(0,1)$, the sparse support is created randomly. 
\begin{itemize}
\item
For random Gaussian noise, we construct the sparse matrix $S_{\rm random}$ whose elements are i.i.d. $\cN(0,1)$ random variables and form $F$ as: $F=L+\eta S_{\rm random}$, where $\eta$ controls the noise level and we set ${\eta=0.2\max_{i,j}(L)_{ij}}$. 
\item
For large noise, we generate the sparse matrix, $S_{\rm sparse}$, such that its elements are chosen from the interval $[-50,50]$ and construct $F$ as $F = L+S_{\rm sparse}$. 
\end{itemize}
We fix $m=100$, define $\rho_r={\rm rank}(L)/m$, where ${\rm rank}(L)$ varies and set the sparsity level $\alpha\in(0,1)$. For each $(\rho_r,\alpha)$ pair, we apply RPCA GD \cite{RPCAgd},~NCF \cite{duttahanzely}, and \program~to recover a low-rank matrix $X$. We consider RMSE, ${\|F-X\|/\sqrt{mn}}$, as performance measure. 

For each class of noise, we run the experiments for 10 times and plot the average RMSEs for each $(\rho_r,\alpha)$. Note that, RPCA GD and NCF use an operator $\mathcal{T}_{\alpha}[S]$ that does not perform an explicit Euclidean projection onto the sparse support of $S$, as the exact projection on $S$ is expensive \cite{duttahanzely,RPCAgd,dutta2019best}. 
Inspired by this, for sparse noise, we design our weight matrix such that it has large weights for the sparse support of $S$ and 1 otherwise. However, for random noise, we simply use a random weight matrix. From Figure \ref{RMSE} we see that for both random and sparse noise, \program~has the least average RMSEs. Moreover, while two PCP algorithms show significant differences in their RMSE diagrams, our \program~produce almost similar RMSE and obtain lower values compare to PCP algorithms in both types of noises. 

\paragraph{Effect of the condition number of $W$ on the convergence of \program.} Problem \eqref{prblm:wlr_id} is tricky, as the condition number, $\kappa_W$ of the weight matrix, $W$ plays an important role in convergence \cite{Pong, duttaligongshah}. We perform a detailed empirical convergence analysis of \program-$\epsilon$ and \program-$1$ on synthetic data in Appendix \ref{sec:appendix_conv} by varying $\kappa_W$ and compared with proximal algorithms. We observe that proximal algorithms are sensitive to $\kappa_W$ (a higher $\kappa_W$, translates to a slower convergence), but both \program-$\epsilon$ and \program-$1$ are not sensitive to $\kappa_W$, they maintain a stable convergence profile for different $\kappa_W$, and converge faster than proximal algorithms in all cases. While APG takes about 0.13 seconds on an average for each experiment, \program-1 and \program-$\epsilon$ take about 0.01 seconds and 0.04 seconds, respectively. % \ad{Based on the Remark \ref{remark:cond number}, we also show the changed condition number of the problem for different choices of $W$.} 

\subsection{Real-world applications}
To validate the strengths and flexibility of our proposed algorithms, we use three real world problems---\myNum{i} structure from motion~(SfM), \myNum{ii} matrix completion with noise, and \myNum{iii} background estimation from fully and partially observed data. We compared our algorithms against 15 state-of-the-art weighted and unweighted low-rank approximation algorithms~(see Table \ref{algo} in Appendix).  

\begin{figure}[!t]
    \centering
    \includegraphics[width=0.81\textwidth]{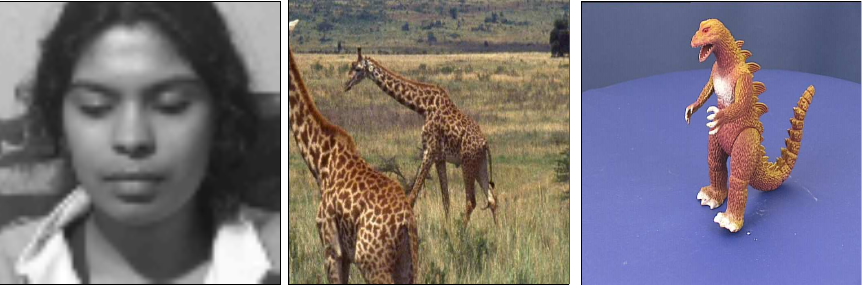} \\[-1mm]
\caption{{Sample frame from the static {\tt Face}, {\tt Giraffe} and {\tt toy dinosaur} sequences. The data matrices, $F$ are of size $2944\times 20$, $240\times 167$, and $319\times 72$, respectively and the prior rank of the sequences are 4, 6, and 4, respectively.}}\label{Fig:sfm_data}
\end{figure}

\paragraph{\myNum{i} Structure from motion and photometric stereo.}
SfM uses local image features without a prior knowledge of locations or pose and infers a three dimensional structure or motion.~For these experiments, we used three popular datasets\footnote{http://www.robots.ox.ac.uk/~abm/}:~non-rigid occluded motion of a giraffe in the background~(for nonrigid SfM), a toy dinosaur~(for affine SfM), and the light directions and surface normal of a static face with a moving light source~(for photometric stereo)~(See more details in Figure \ref{Fig:sfm_data}).~The datasets have 69.35\%, 23.08\%, and 58.28\% observable entries, respectively. Therefore, we use a binary mask as weight $W$ such that $W_{ij}=1$ if the data has an entry at the $(i,j)^{\rm th}$ position, otherwise, $W_{ij}=0.$ With this setup, our formulation works as a matrix completion problem. We compared \program-1 and \program-$\epsilon$ with respect to the damped Newton algorithm in~\cite{Buchanan}. Admittedly, \cite{Buchanan} obtains the best factorization pair, $X=UV$ such that it gives minimum loss (within the observable entry), $\frac{\|(F-X)\odot W\|}{\|W\|}$ for all cases. Additionally, we also calculated the loss outside the observable entries, that is, $\frac{\|(F-X)\odot(\mathbf{1}- W)\|}{\|\mathbf{1}- W\|}$ where the performance of \program~is better in all cases. See results provided in Table \ref{table:sfm} below.

\begin{table}[!t]
\begin{center}
\begin{tabular}{ccc}
\hline
{ \bf Dataset} & {$\frac{\|(X-F)\odot W\|}{\|W\|}$} &{$\frac{\|(X-F)\odot(\mathbf{1}- W)\|}{\|\mathbf{1}- W\|}$}\\
\hline
          & 0.5085~(\program-$1$)               & 284.1509~(\program-$1$)\\
{\tt Giraffe} & 0.5399~(\program-$\epsilon$)                  & {\bf 271.4828}~(\program-$\epsilon$)\\
          & {\bf 0.3228}~\cite{Buchanan}   & 364.2476~\cite{Buchanan}\\
\hline
          & 4.4081~(\program-$1$)               & 244.5437~(\program-$1$)\\
{\tt Toy Dino}    & 5.2453~(\program-$\epsilon$)                 & {\bf 239.6335}~(\program-$\epsilon$)\\
          & {\bf 1.0847}~\cite{Buchanan}    & 318.666~\cite{Buchanan}\\
\hline
          & 0.023~(\program-$1$)                & {\bf 0.7143}~(\program-$1$)\\
{\tt Face}    & 0.0232~(\program-$\epsilon$)                  &~{ 0.7338}~(\program-$\epsilon$) \\
          & {\bf 0.0223}~\cite{Buchanan}    & 0.98~\cite{Buchanan}\\
\hline
\end{tabular}
\end{center}
\caption{{Comparisons between \program-$\epsilon$, \program-$1$, and damped Newton method on structure from motion and photometric stereo datasets.}}\label{table:sfm}
\end{table}

\paragraph{\myNum{ii} Matrix completion with noise on power-grid data.} Matrix completion is one of the important special cases of weighted low-rank estimation problems as for this problem, the weights are reduced to $\{0,1\}$.~This set of experiments are inspired by \cite{regularizedlow_das}. The dataset and the codes for BIRSVD are collected from author's website\footnote{https://homepage.univie.ac.at/saptarshi.das/index.html}. In this experiment, the test dataset contains 48 hours of temperature data sampled every 30 minutes over 20 European cities. The $(i,j)^{\rm th}$ entry of the matrix represents the $i^{\rm th}$ temperature measurement for the $j^{\rm th}$ city. As a prior information, we use the fact that temperature varies smoothly over time and the data matrix is low-rank.

\begin{figure}[!t]
\centering
\subfloat[Within data]{     \includegraphics[width=0.45\linewidth]{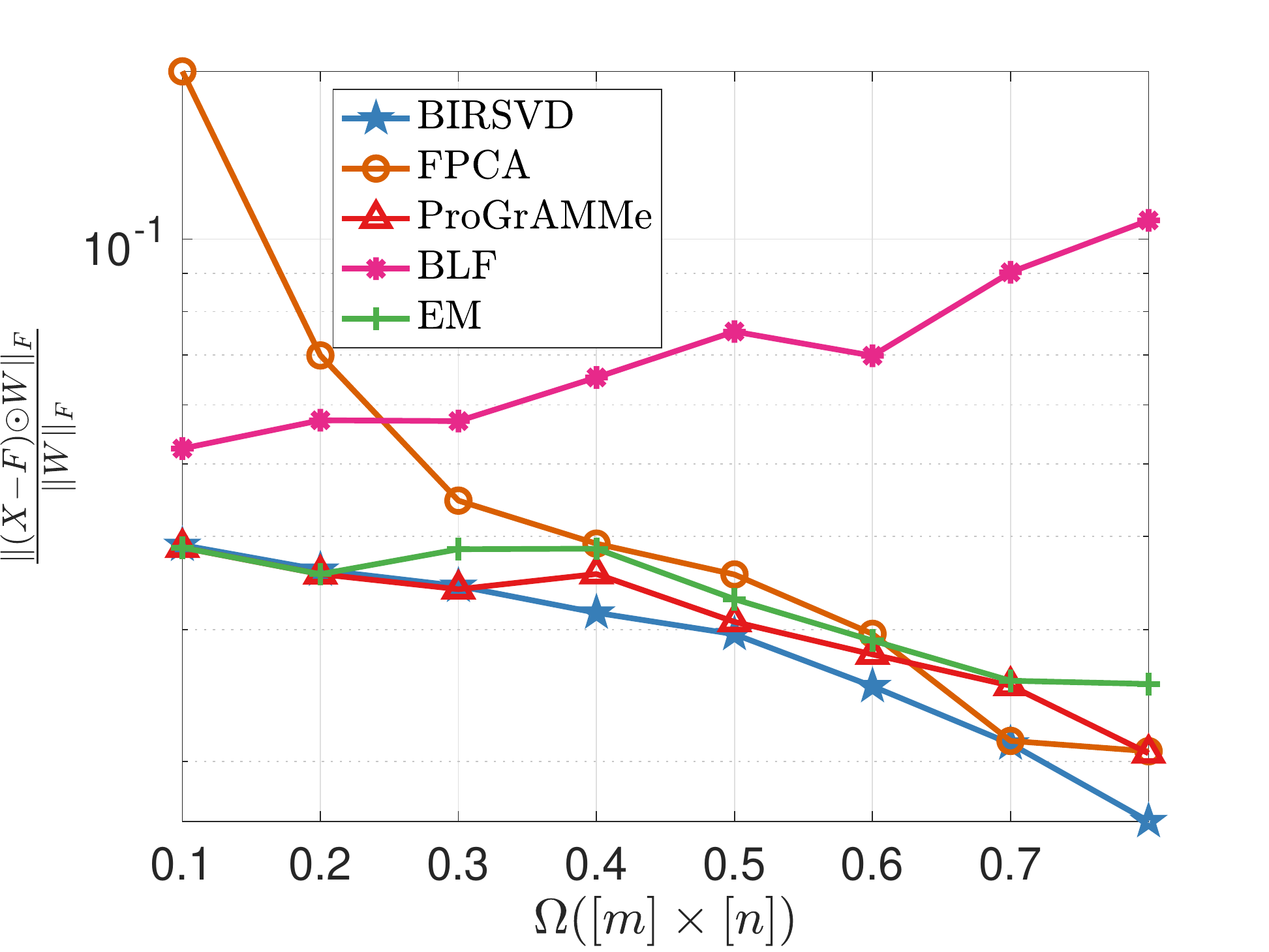} }  \hspace{2pt}
\subfloat[Out of data]{     \includegraphics[width=0.45\linewidth]{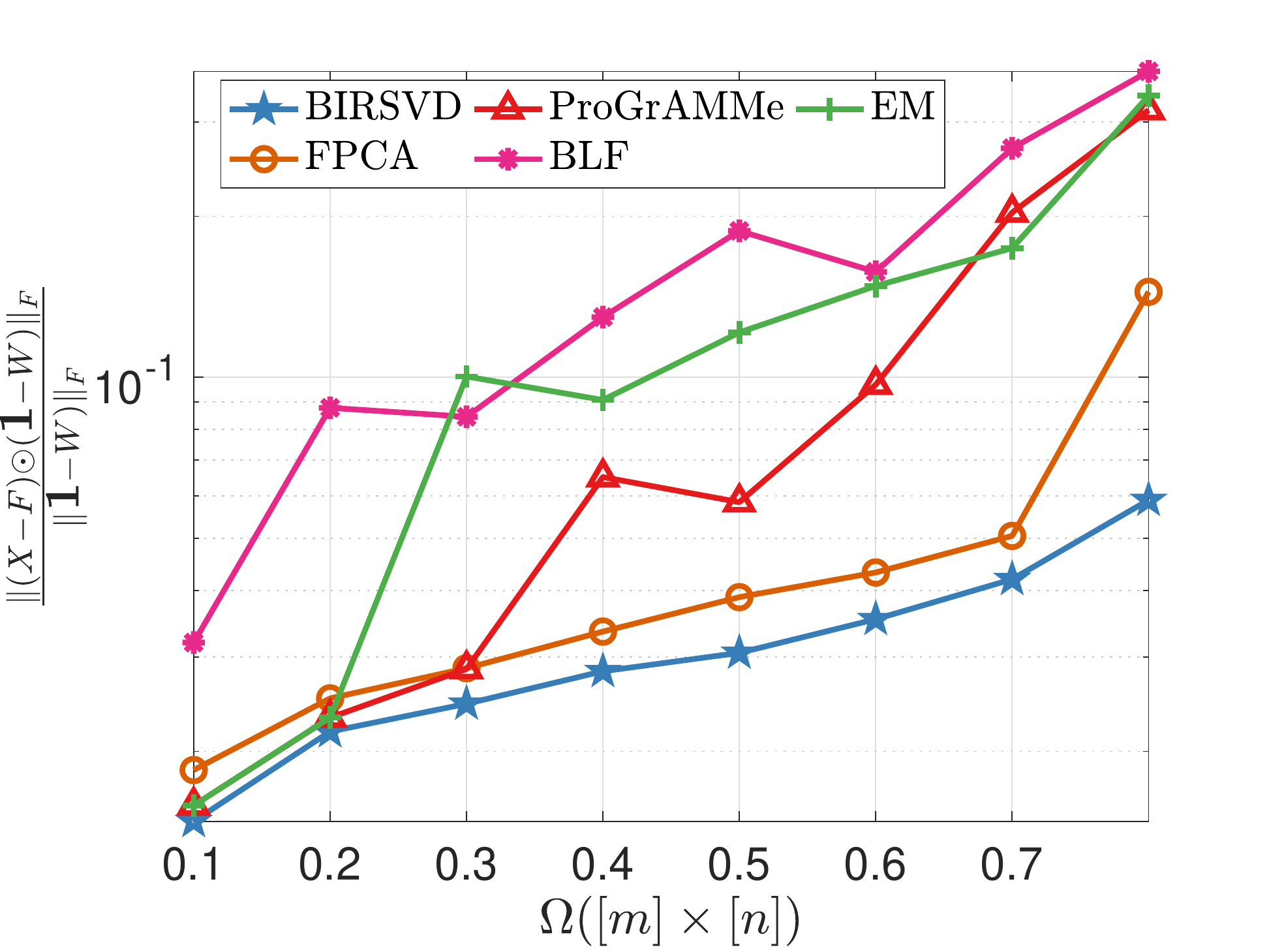} } \\[-2mm]
    %%%%%%%%
\caption{{Fidelity within and out of the data for problem of matrix completion with noise on power-grid data. Here $\Omega$ denotes the percentage of missing data. We use \program-$1$ for these experiments.}}\label{Fig:fid_in_out} 
\end{figure}

For this data set, we use a rank 2 approximation. We run each algorithm with random initialization for 10 times and plot the average results in Figure \ref{Fig:fid_in_out}. Each algorithm is run for 50 global iterations or tolerance set to machine precision, whichever attained first. For BIRSVD, \program-$1$, and~BLF~(with $\ell_2$ loss) we use $\tau = 0.006$. Additionally, for BIRSVD another regularizer is set to 0 (as in \cite{regularizedlow_das}). FPCA \cite{ma2011fixed} computes a low rank approximation by using a prior information that the nuclear norm of the matrix is bounded. As it turns out, the EM algorithm \cite{srebro} has a slower convergence and the performance of \program~is albeit better. For more discussion and detailed comparisons we refer the readers to Section \ref{sec:appendix_weather} in Appendix.

\paragraph{\myNum{iii} Background estimation.}
Background estimation and moving object detection from a video sequence is a classic problem in computer vision and it plays an  important role in human activity recognition, tracking, and video analysis from surveillance cameras. In the conventional matrix decomposition framework used for background estimation, the video frames are concatenated in a data matrix $F$, and the background matrix, $X$, is of low-rank~\cite{oliver1999}, as the background frames are often static or close to static. However, the foreground is usually sparse. The desired target rank of the background is hard to determine due to some inherent challenges, such as, changing illumination, occlusion, dynamic foreground, reflection, and other noisy artifacts. Therefore, robust PCA algorithms such as, iEALM \cite{LinChenMa}, APG \cite{APG}, ReProCS \cite{reprocs} overcome the rank challenge robustly. However, in some cases, the target rank and the sparsity level can be part of the user-defined hyperparameters.~Therefore, instead one might use a different approach as in \cite{godec,duttahanzely,RPCAgd}.

For these set of experiments, we use \program-$\epsilon$ and compared with two different types robust PCP formulations\footnote{See \cite{wen2019nonconvex} for an overview.}. Note that, the PCP formulations have a way to detect the sparse outliers but our formulation does not. We overcome this challenge by using large weights. Similar to the heuristic used in the experiments for synthetic data, we choose a random subset of entries from the set $[m]\times [n]$ and use a large range of weight at those elements. This is similar to the idea used in \cite{duttahanzely,RPCAgd,dutta2019best}, as for specific sparsity percentage $\alpha$, the operator $\mathcal{T}_{\alpha}[S]$ performs an approximate projection onto the sparse support of $S$.~We argue that randomly selecting $\alpha\%$ of elements from ${[m]\times [n]}$ and hitting them by large weights, we obtain the same artifact. Indeed our empirical evidence in Figure \ref{Fig:bg_st} justifies that.

\begin{figure}[!t]
    \centering
    \includegraphics[width=\textwidth, height=2in]{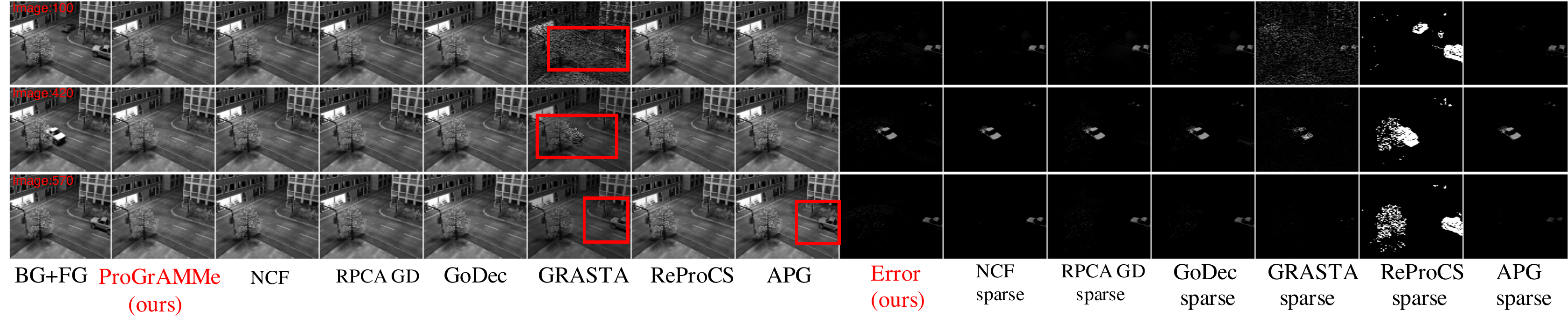} \\[-1mm]
            %\vspace{-5pt}
\caption{{Sample video frames from the Stuttgart {\tt Basic} video sequence.~\program-$\epsilon$ provides a visually high quality background.}}\label{Fig:bg_st}
\end{figure}

% \aritra{BG estimation full data Stuttgart- \program-$\epsilon$\\
%  BG estimation full data CD Net-\program-$\epsilon$\\
%  BG estimation partial data Stuttgart Net-\program-1\\}

In our experiments, we use {\em eight} different video sequences:~(i)~the {\tt Basic} sequence from Stuttgart synthetic dataset~\cite{cvpr11brutzer}, (ii)~four video sequences from~CDNet2014 datasets~\cite{wang2014cdnet}, and (iii) three video sequences from SBI dataset \cite{sbi_a,SBI_web}. We extensively use the Stuttgart video sequence as it is equipped with foreground ground truth for each frame. For iEALM and APG, we set $\lambda=1/\sqrt{m}$, and use $\mu=1.25/\|F\|_2$ and $\rho=1.5$, where $\|F\|_2$ is the spectral norm (maximum singular value) of $F$. For Best pair RPCA, RPCA GD, NCF, GoDec, and our \program-$\epsilon$, we set $r=2$, target sparsity 10\% and additionally, for GoDec, we set $q=2$. For GRATSA, we set the parameters the same as those mentioned in the authors' website\footnote{{https://sites.google.com/site/hejunzz/grasta}}. The qualitative analysis on the background and foreground recovered suggest that our method recovers a visually similar or better quality background and foreground compare to the other methods. Note that, RPCA GD and ReProCS recover a fragmentary foreground with more false positives; moreover, GRASTA, iEALM, and APG cannot remove the static foreground object. See Section \ref{sec:appendix_BG} in Appendix for more qualitative results (Figure \ref{Fig:bg_2}) and detailed quantitative results~(Figure \ref{fig:quant_bg}) of MSSIM and PSNR.

%\vspace{-5pt}
\paragraph{\myNum{iv} Background estimation from partially observed/missing data.}
We randomly select the set of observable entries in the data matrix, $F$ and perform our experiments on Stuttgart {\tt Basic} video. For these experiments, we use \program-$1$. As this is a missing data case, for \program-$1$, we use a binary mask as the weight. Figure \ref{Fig:bg_st_partial} shows the qualitative results on different subsampled video. For a detailed quantitative evaluation of  \program~with respect to the $\epsilon$-proximity metric--$d_{\epsilon}(X,Y)$ as in \cite{duttahanzely} in recovering the foreground objects and to see the execution time for different missing data cases, see Figure \ref{Fig:missng_data_bg} in Section \ref{sec:appendix-missingdataBG} of Appendix.

\begin{figure}[!t]
    \centering
   \includegraphics[width=\textwidth]{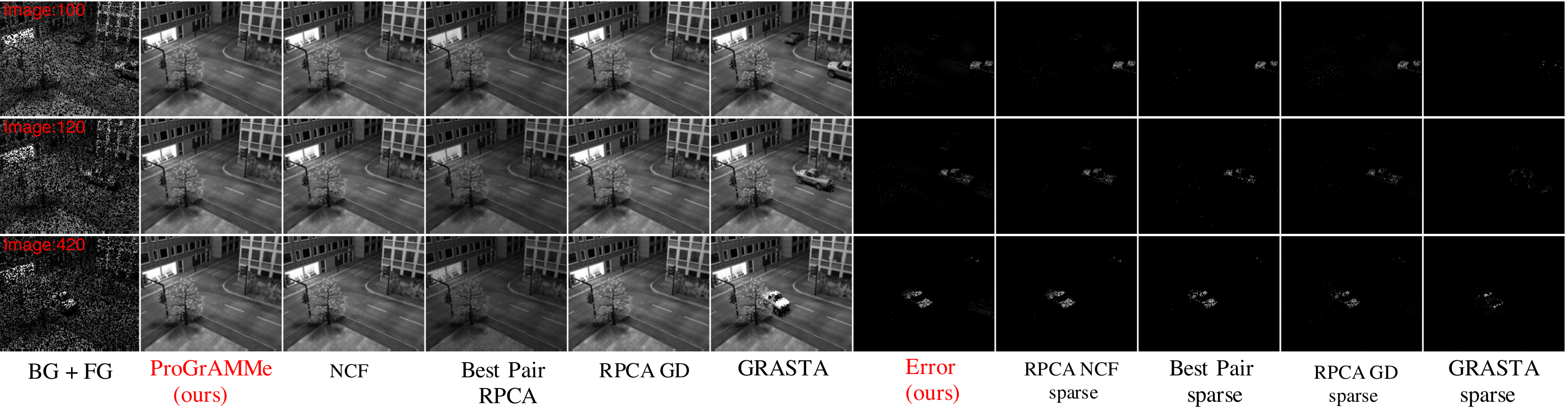} \\[-1mm]
   %\vspace{-5pt}
\caption{{Sample video frames from the Stuttgart {\tt Basic} video sequence for the missing data case. From top to bottom we use $\Omega=0.8,0.7,$ and 0.6, respectively.}}\label{Fig:bg_st_partial}
\end{figure}

\section{Conclusions}

In this paper, we proposed a generic weighted low-rank recovery model and designed an SVD-free fast algorithm for solving the model. 
Our model covers several existing low-rank approaches in the literature as special cases and can easily be extended to the non-convex setting.
Our proposed algorithm combines proximal gradient descent method and the variational formulation of nuclear norm, which does not require to compute the SVD in each step. This makes the algorithm highly scalable to larger data and enjoys a lower per iteration complexity than those who require SVD.~Moreover, based on a rank identification property, we designed a rank continuation scheme which asymptotically achieves the minimal per iteration complexity. Numerical experiments on various problems and settings were performed, from which we observe superior performance of our proposed algorithm compared to a vast class of weighted and unweighted low-rank algorithms.

\appendix

\section{Addendum to the numerical experiments}
In this section, we added some extra numerical experiments that complement our experiments and other claims in the main paper.

\subsection{Table of baseline methods} \label{sec:table}
In Table \ref{algo} we summarize all the methods compared in this paper.

\begin{table}
\small
\begin{center}
\begin{tabular}{lllll}
\hline
{\bf Algorithm} & {\bf Abbreviation} &{\bf Appears in}& {\bf Ref.}\\
\hline
\begin{tabular}{l}
{\hspace{-2mm}}Inexact Augmented Lagrange \\{\hspace{1mm}}
Method of Multipliers 
\end{tabular}
& iEALM & Fig. \ref{fig:quant_bg} &\cite{LinChenMa}  \\
\hline
Proximal Gradient & PG & Fig. \ref{Fig:conv_fn}, \ref{Fig:conv_iter} &\cite{Ji_APG,Shen_anaccelerated} \\
\hline
Accelerated Proximal Gradient & APG & Fig. \ref{Fig:bg_st} &\cite{Ji_APG,Shen_anaccelerated}  \\
\hline
Accelerated Proximal Gradient-II & APG-II & Fig. \ref{Fig:conv_fn}, \ref{Fig:conv_iter} &\cite{APG}  \\
\hline
\begin{tabular}{l}
{\hspace{-2mm}}Grassmannian Robust Adaptive \\{\hspace{1mm}}
Subspace Tracking Algorithm
\end{tabular}
& GRASTA& Fig. \ref{Fig:bg_st}, \ref{Fig:bg_st_partial}, \ref{Fig:bg_2}, \ref{fig:quant_bg}  & \cite{grasta} \\
\hline
Go Decomposition & GoDec & Fig. \ref{Fig:bg_st}, \ref{Fig:bg_st_partial}, \ref{Fig:bg_2}  & \cite{godec}\\
\hline
Robust PCA Gradient Descent & RPCA GD & Fig. \ref{RMSE}, \ref{Fig:bg_st}, \ref{Fig:bg_st_partial}, \ref{Fig:bg_2},  \ref{fig:quant_bg}, \ref{Fig:missng_data_bg}   & \cite{RPCAgd}\\
\hline
Robust PCA Nonconvex Feasibility & NCF &  Fig. \ref{RMSE}, \ref{Fig:bg_st}, \ref{Fig:bg_st_partial}, \ref{Fig:bg_2}, \ref{Fig:missng_data_bg} &\cite{duttahanzely}\\
\hline
Recursive projected compressed sensing & ReProCS &  Fig. \ref{Fig:bg_st}, \ref{Fig:bg_st_partial}, \ref{Fig:bg_2}, \ref{fig:quant_bg}   & \cite{reprocs}\\
\hline
Best pair RPCA & --- &  Fig. \ref{Fig:missng_data_bg}   & \cite{dutta2019best}\\
\hline
Fixed point Bergman & FPCA & Fig. \ref{Fig:fid_in_out}, \ref{Fig:fidelity_2}  & \cite{ma2011fixed}\\
\hline
Expectation Maximization & EM & Fig. \ref{Fig:fid_in_out}, \ref{Fig:fidelity_2}, \ref{Fig:Fidelity_3} & \cite{srebro}\\
\hline
Bi-iterative regularized SVD &BIRSVD & Fig. \ref{Fig:fid_in_out}, \ref{Fig:fidelity_2}, \ref{Fig:Fidelity_3} & \cite{regularizedlow_das}\\
\hline
Bilinear factorization & BLF & Fig. \ref{Fig:fid_in_out}, \ref{Fig:fidelity_2} & \cite{bl_factor}\\
\hline
Damped Newton &---& Table \ref{table:sfm} & \cite{Buchanan}\\
\hline
\end{tabular}
\end{center}
\caption{{Algorithms compared in this paper.}}\label{algo}
\end{table}

\subsection{Convergence behavior on synthetic data}\label{sec:appendix_conv}

In this section, we demonstrate the convergence of our algorithm(s). For this purpose, we generate the low-rank matrix, $L$, as a product of two independent full-rank matrices of size $m\times r$ with $r<m$ such that the elements are independent and identically distributed (i.i.d.) and sampled from a normal distribution---$\cN(0,1)$. In our setup, $m=100$, and $r=5.$ We generate $E$ as a Gaussian noise matrix whose elements are i.i.d. $\cN(0,1)$ random variables and constructed $F$ as: $F=L+E$. We fixed $\min_{i,j}W_{ij}=1$, and choose $\max_{i,j}W_{ij}$ from a set $\Lambda=\{ 10,50,100,500,1000,5000,10^4,5\times10^4,10^5,5\times10^6,10^7\}$. At each instance, we form an $m\times m$ weight matrix, $W$ by using {\tt MATLAB} function {\tt randi} that generates pseudorandom integers from a uniform discrete distribution, $[\min_{i,j}W_{ij},\Lambda(p)]$, where $\Lambda(p)$ is chosen from $\Lambda$ without replacement.

We compare our \program-$\epsilon$ (Algorithm \ref{alg:alg}), its inexact counterpart \program-$1$, proximal gradient~(PG) algorithm and its accelerated version---accelerated proximal gradient (APG) for these experiments. For different condition number~($\kappa_W$) of the weight matrix $W$, we plotted the difference between functional values evaluated at consecutive iterates, that is, $\Phi(X_k)-\Phi(X_{k-1})$ versus iterations in Figure~\ref{Fig:conv_fn}. In Figure~\ref{Fig:conv_iter}, we plot the difference between consecutive iterates, $\|X_{k+1}-X_k\|$, versus iterations. Note that by construction, $\kappa_W$ ranges between 1238.021 to 21043.1574. The convergence plots justify our claims, that, although problem \eqref{prblm:wlr_nuclear} belongs to the class of problems \eqref{prblm:wlr_loss}, the general algorithms used in \cite{Ji_APG,Shen_anaccelerated,Toh2009AnAP,Liu:2014:NNR} fail to provide good convergence results when $\kappa_W$ is large\footnote{The condition number largely varies due to the random arrangement of the ``weight" elements in $W$; some $W$s with a smaller $\lambda_{\rm max}$ have higher condition numbers than those with a larger $\lambda_{\rm max}.$}; that, our approaches are faster compare to those general approaches; and, that the performance of both exact and inexact \program~are the same. Moreover, in all cases, \program-$\epsilon$ and \program-$1$ can  recover the rank 5 low-rank matrix, but PG and APG mostly recover a full-rank matrix. We hypothesize this is due to the sensitivity of PG and APG algorithms on the balancing parameter, $\tau$ which is required to perform the proximal mapping (in this case, the singular value thresholding \cite{caicandesshen,shrinkage}); see also Figure \ref{fig:comp_rc_1}.

\begin{figure}[!t]
\centering
    \includegraphics[width=0.245\linewidth]{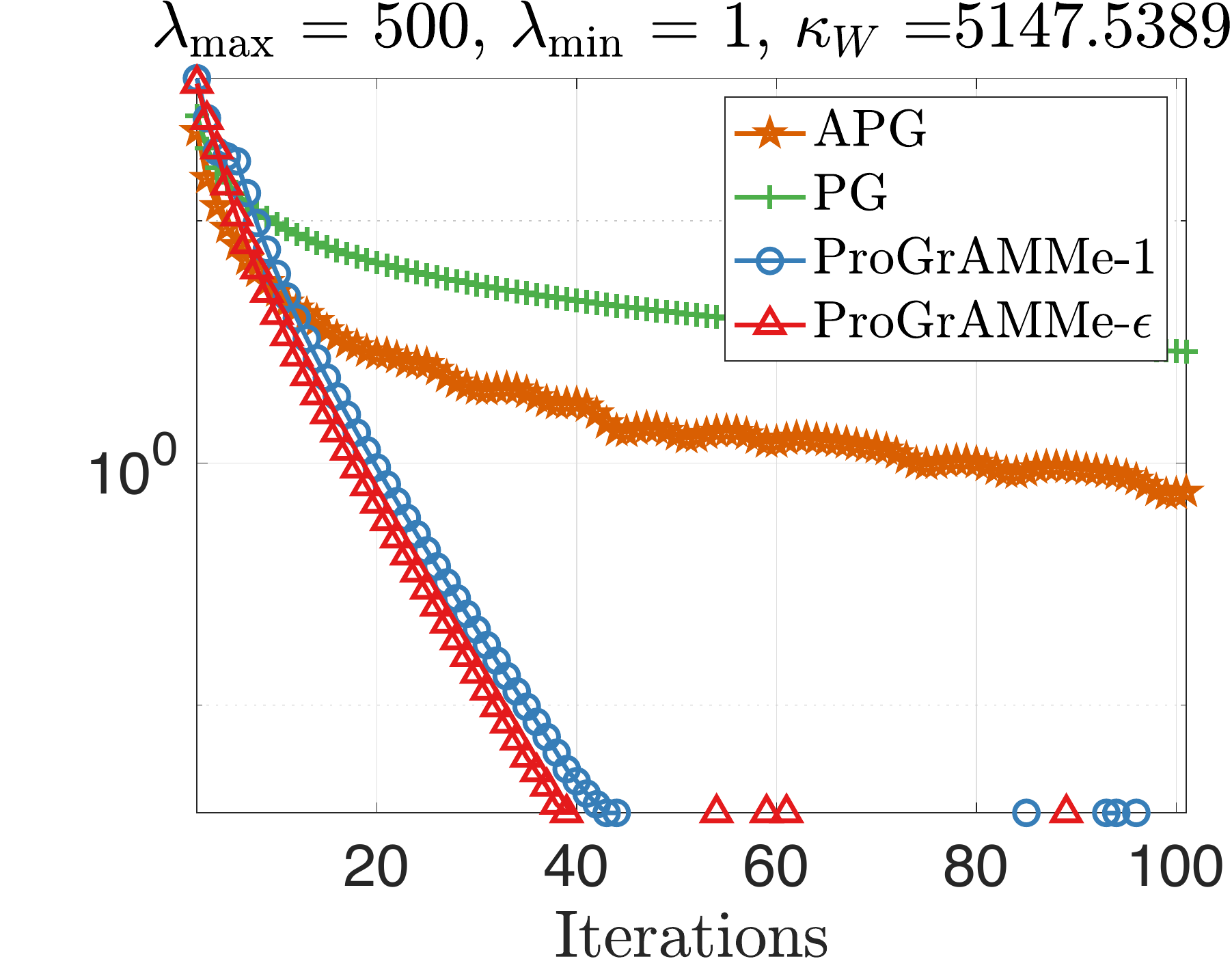} \hspace{-4pt}
    \includegraphics[width=0.245\linewidth]{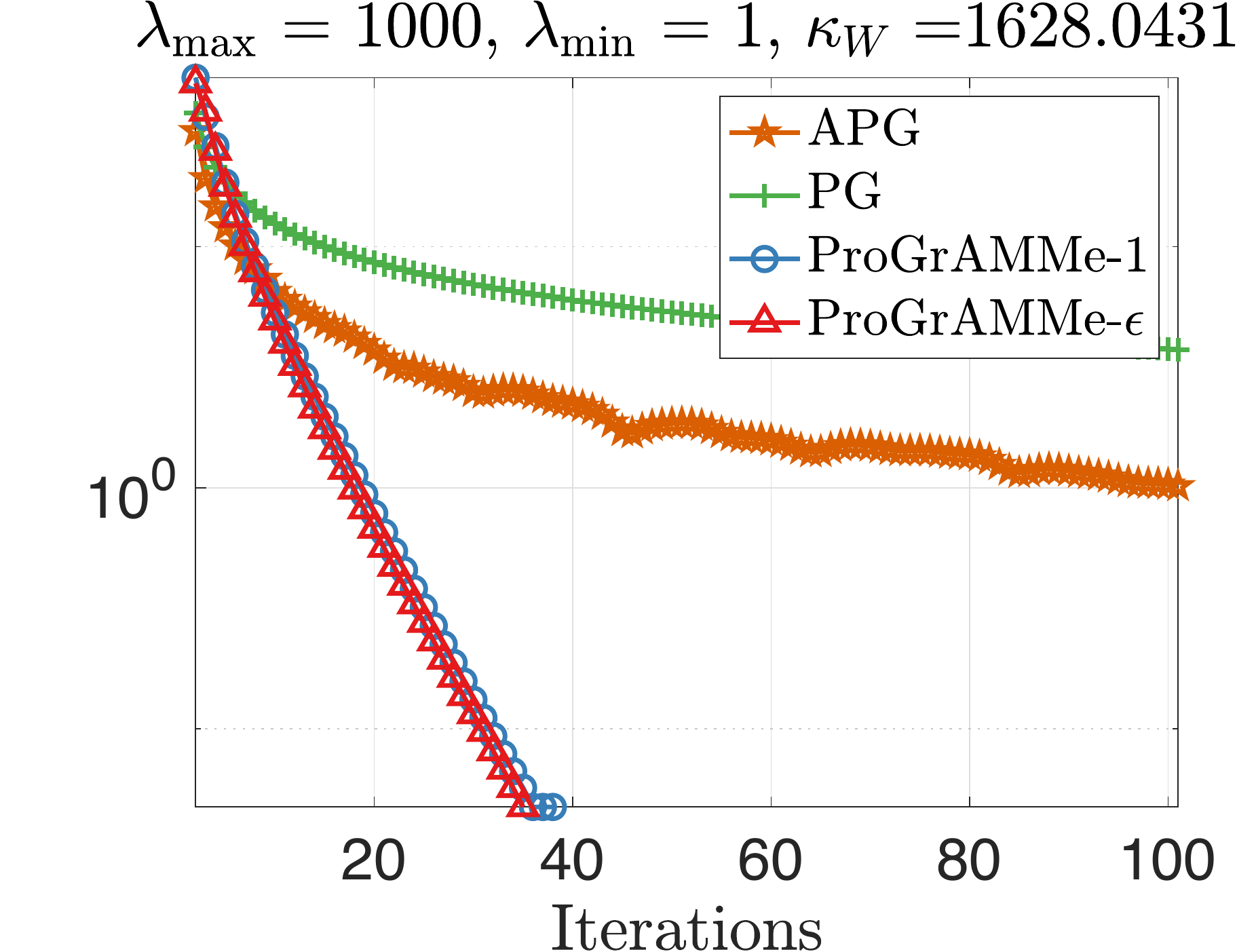} \hspace{-4pt}
    \includegraphics[width=0.245\linewidth]{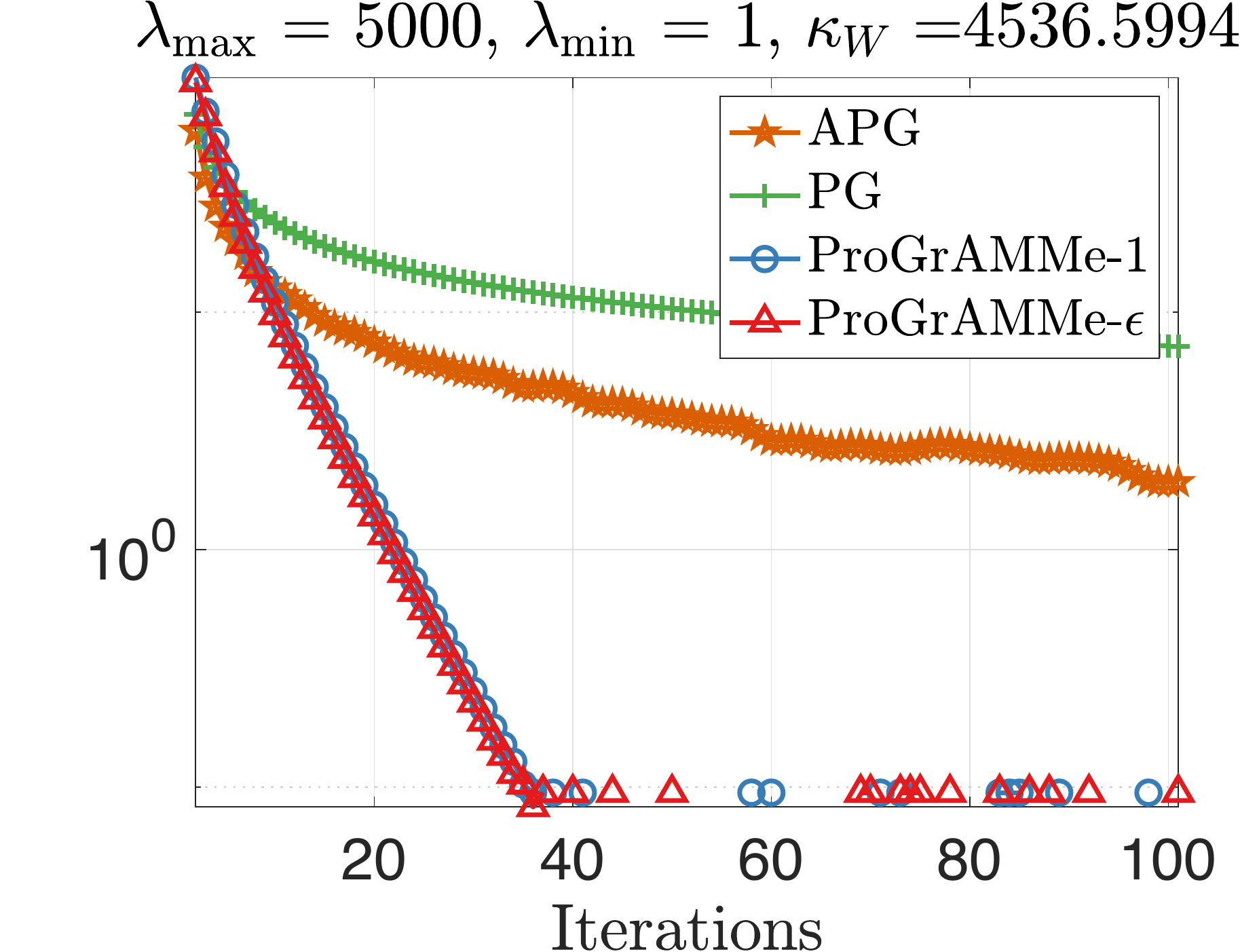} \hspace{-4pt}
    \includegraphics[width=0.245\linewidth]{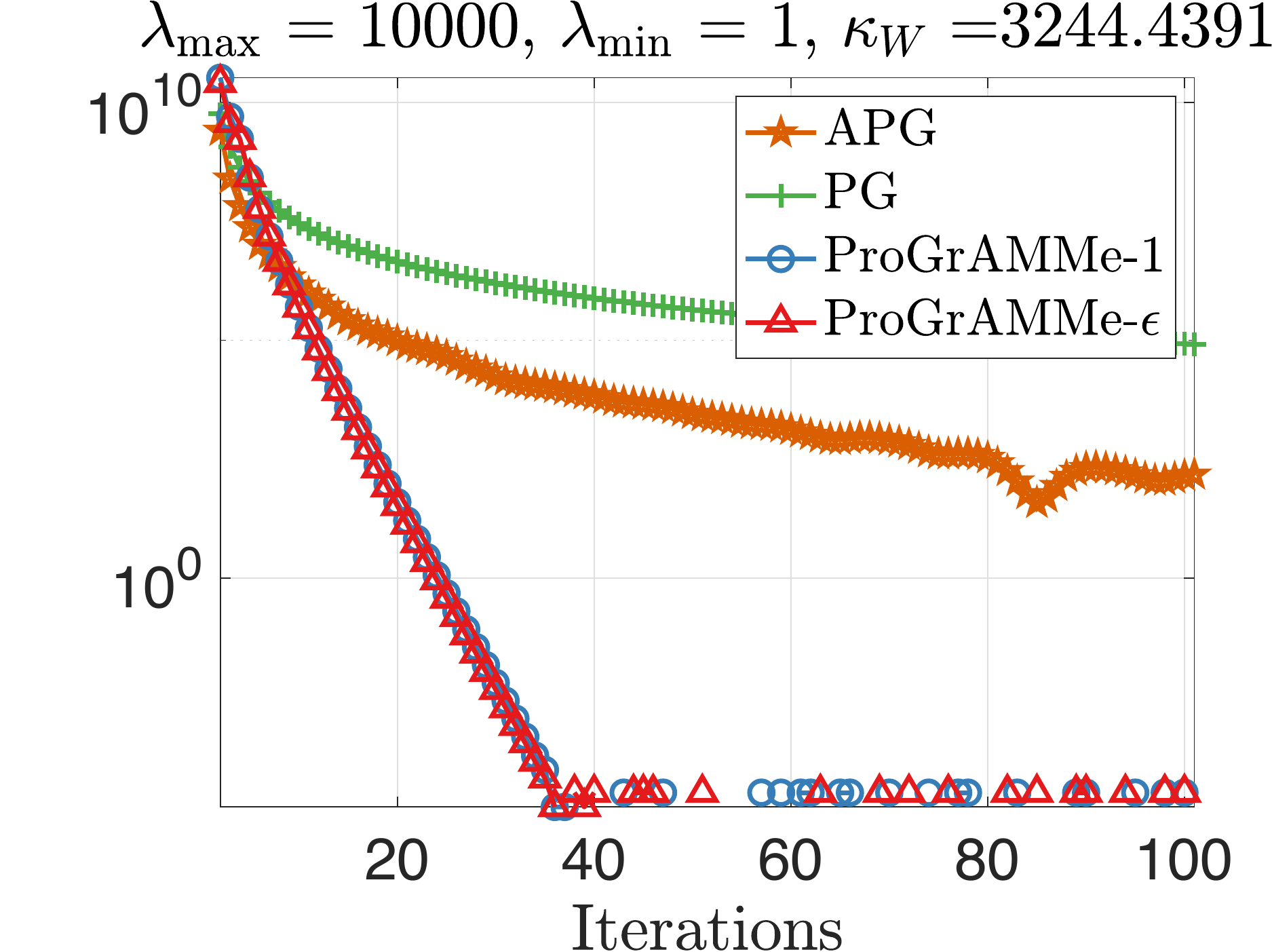} \\
%%%%%%%%%%
    \includegraphics[width=0.245\linewidth]{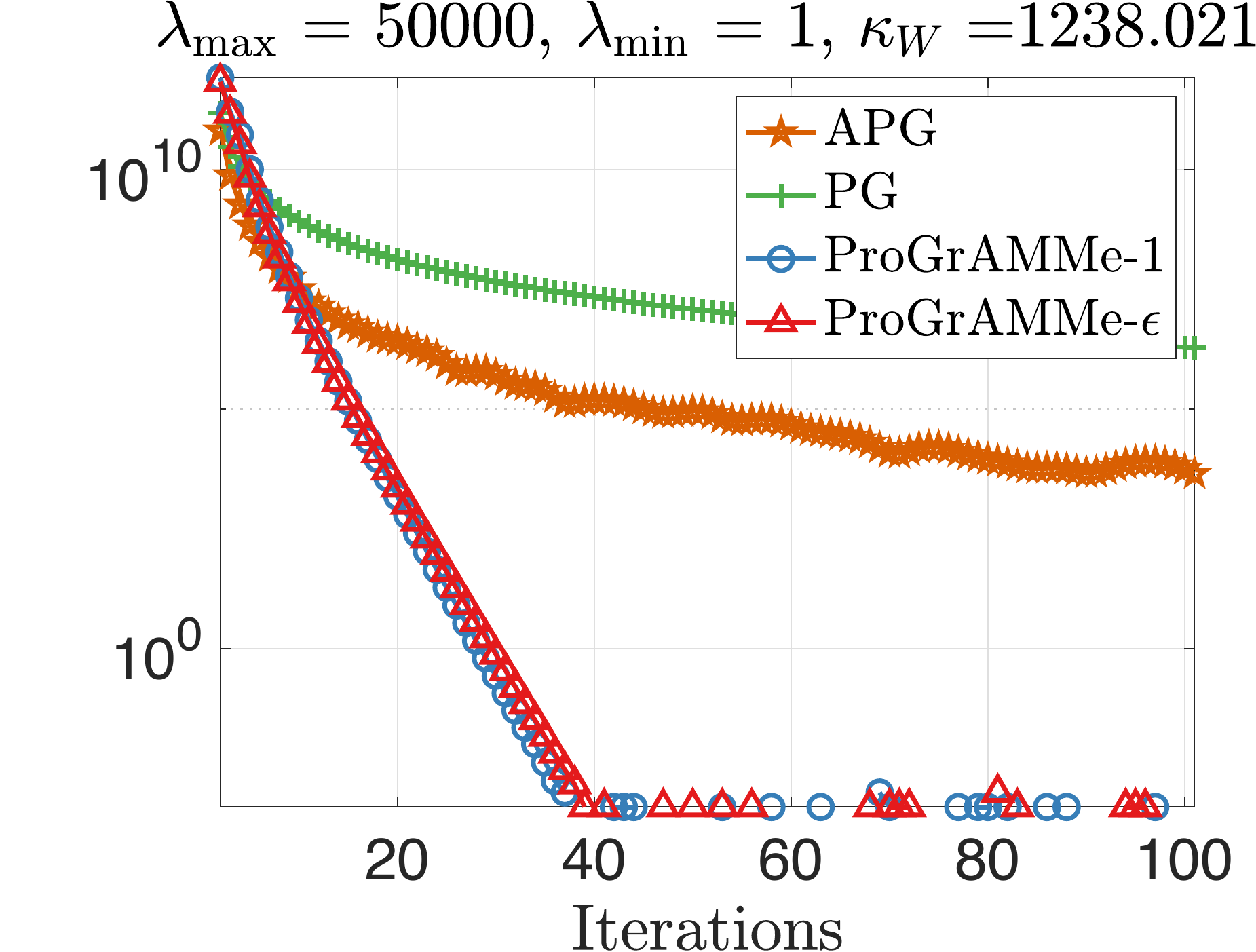} \hspace{-4pt}
    \includegraphics[width=0.245\linewidth]{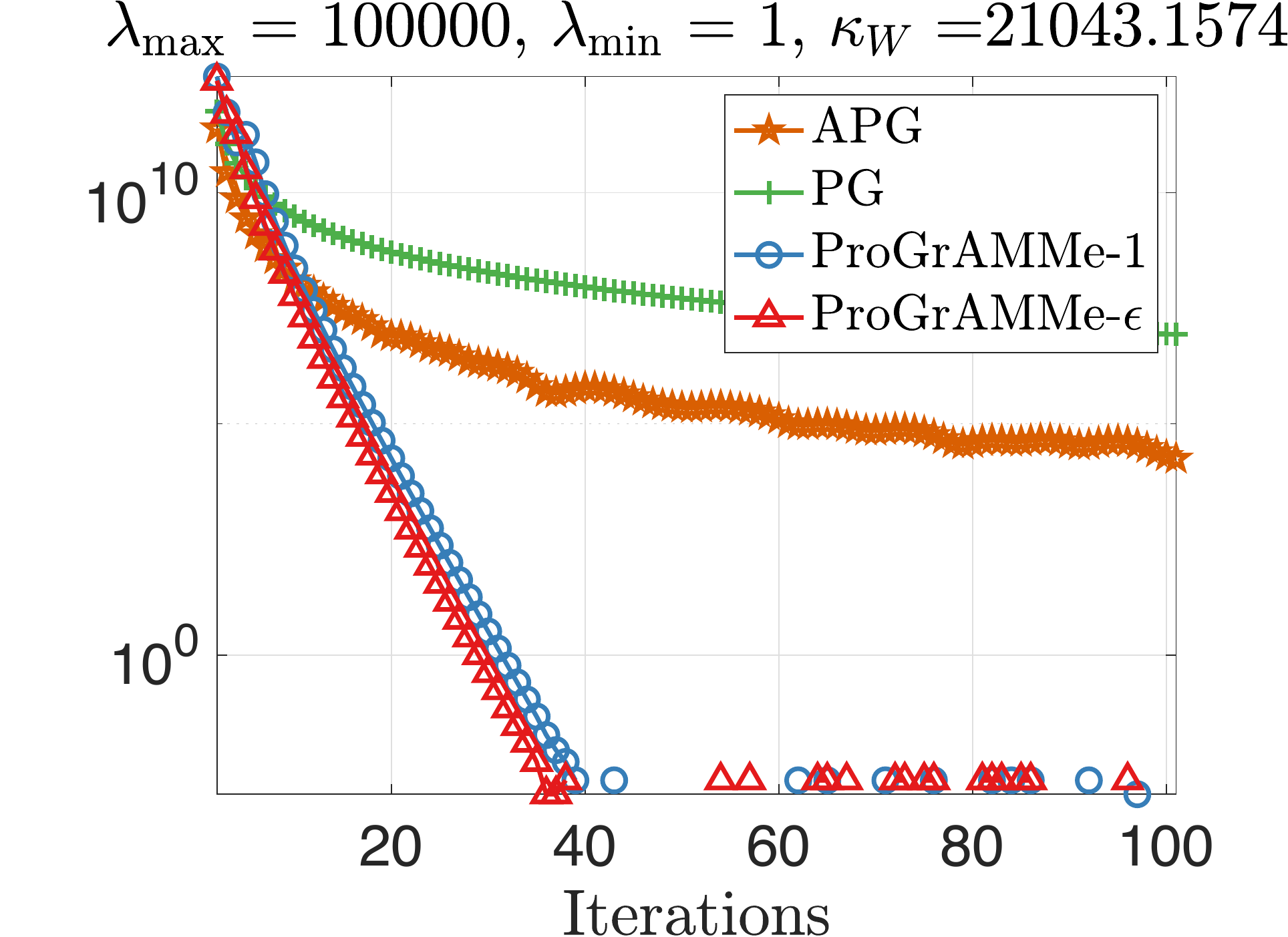} \hspace{-4pt}
    \includegraphics[width=0.245\linewidth]{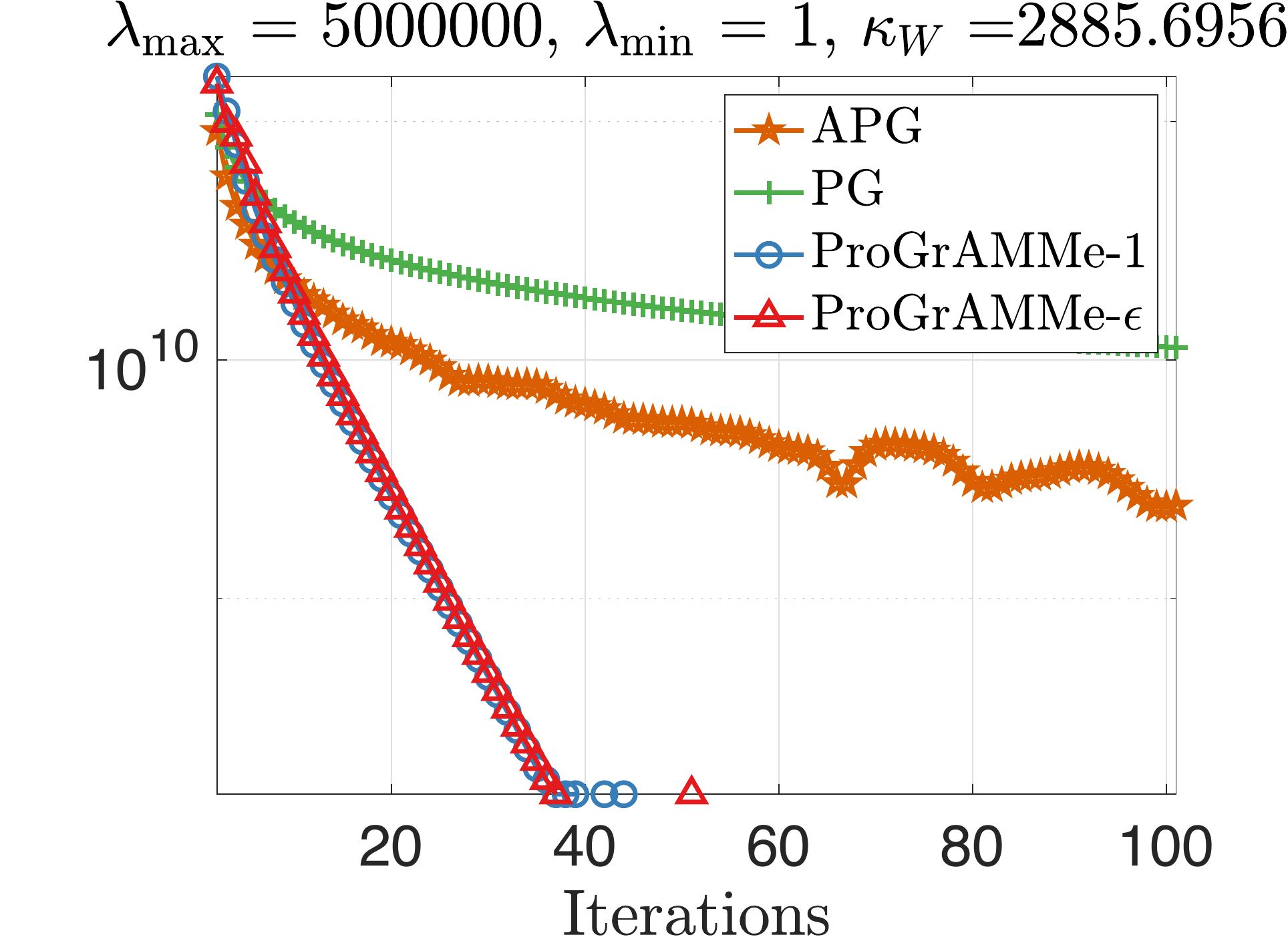} \hspace{-4pt}
    \includegraphics[width=0.245\linewidth]{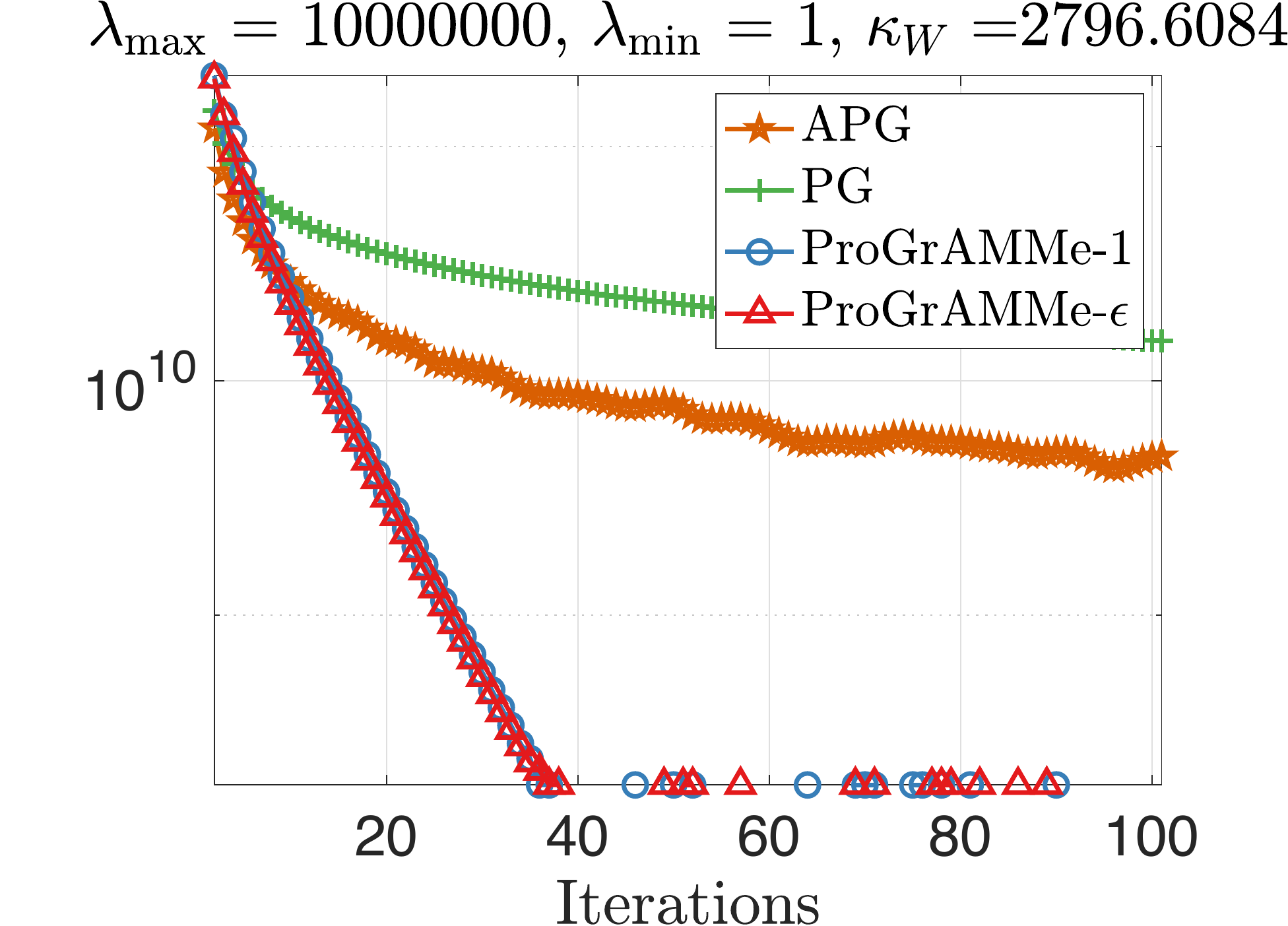} \\
%%%%%%%%%%
    \includegraphics[width=0.245\linewidth]{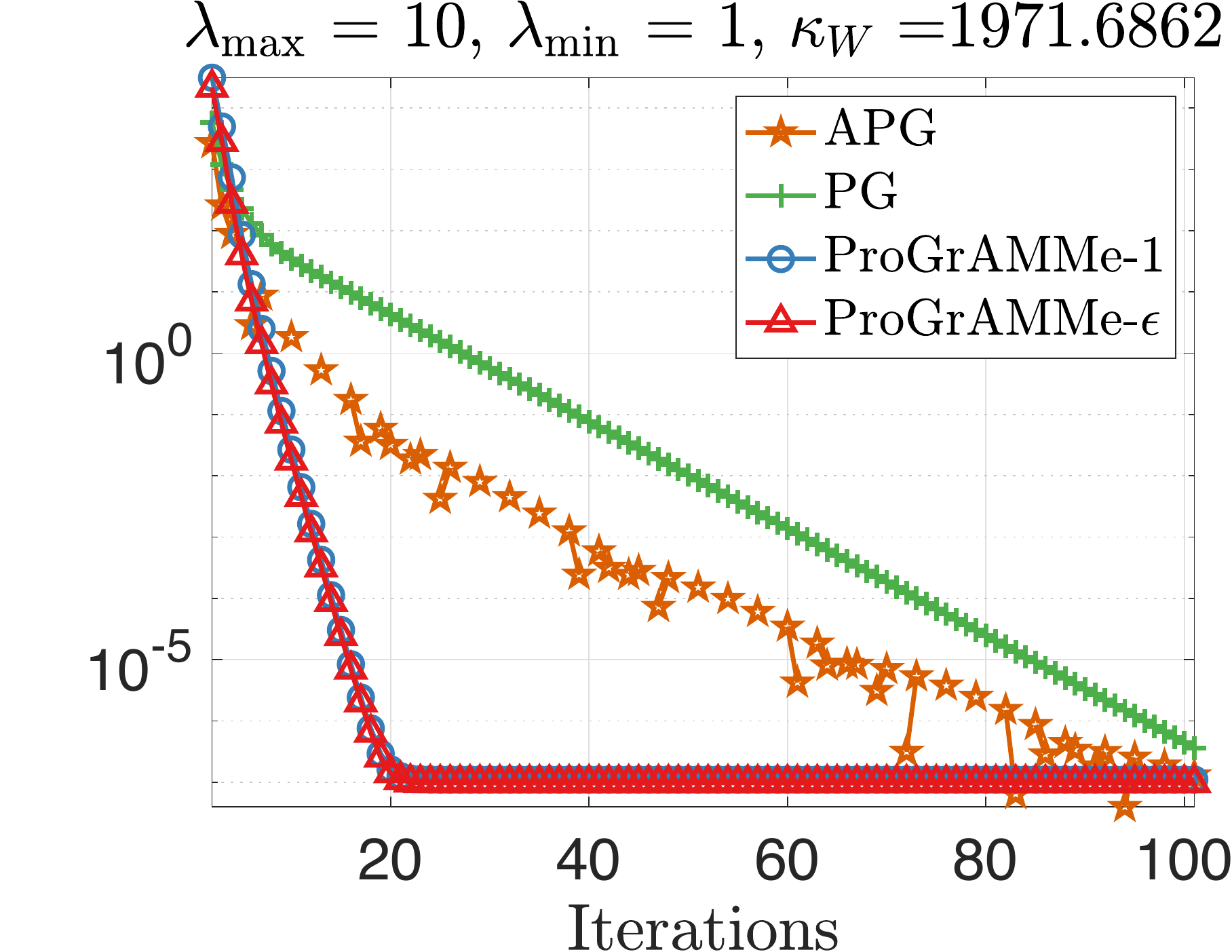} \hspace{-4pt}
    \includegraphics[width=0.245\linewidth]{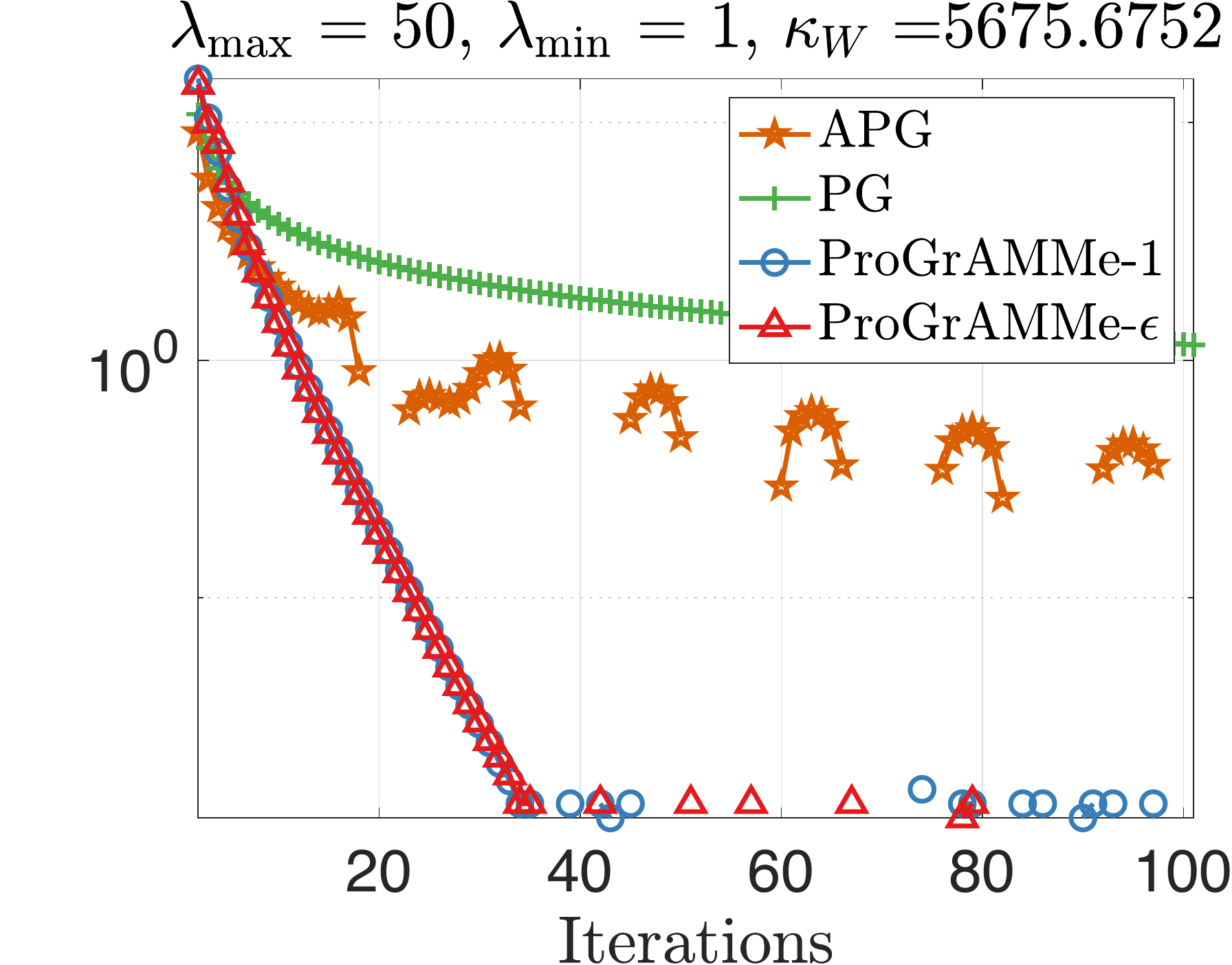} \hspace{-4pt}
    \includegraphics[width=0.245\linewidth]{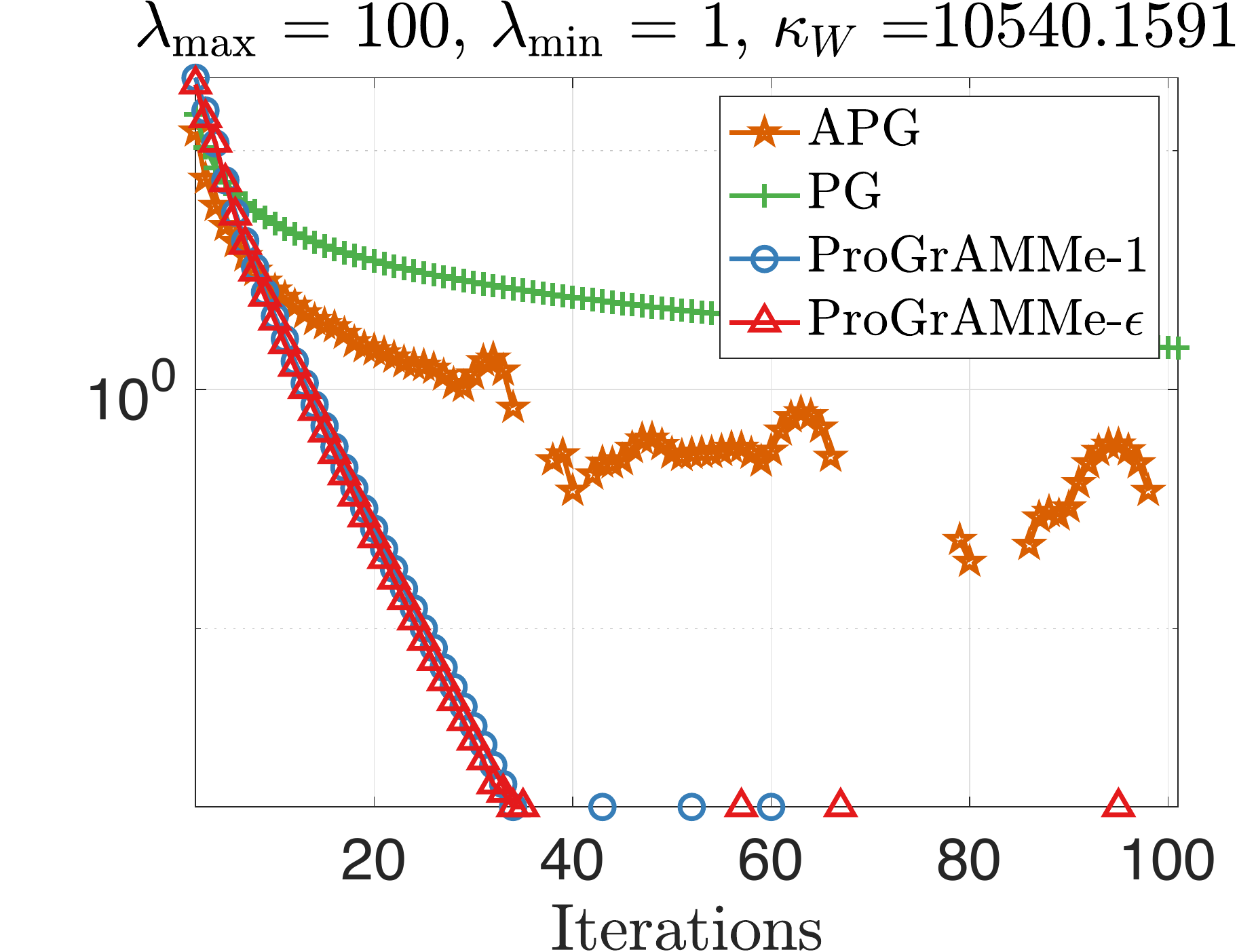} \hspace{-4pt}
    %\includegraphics[width=0.245\linewidth]{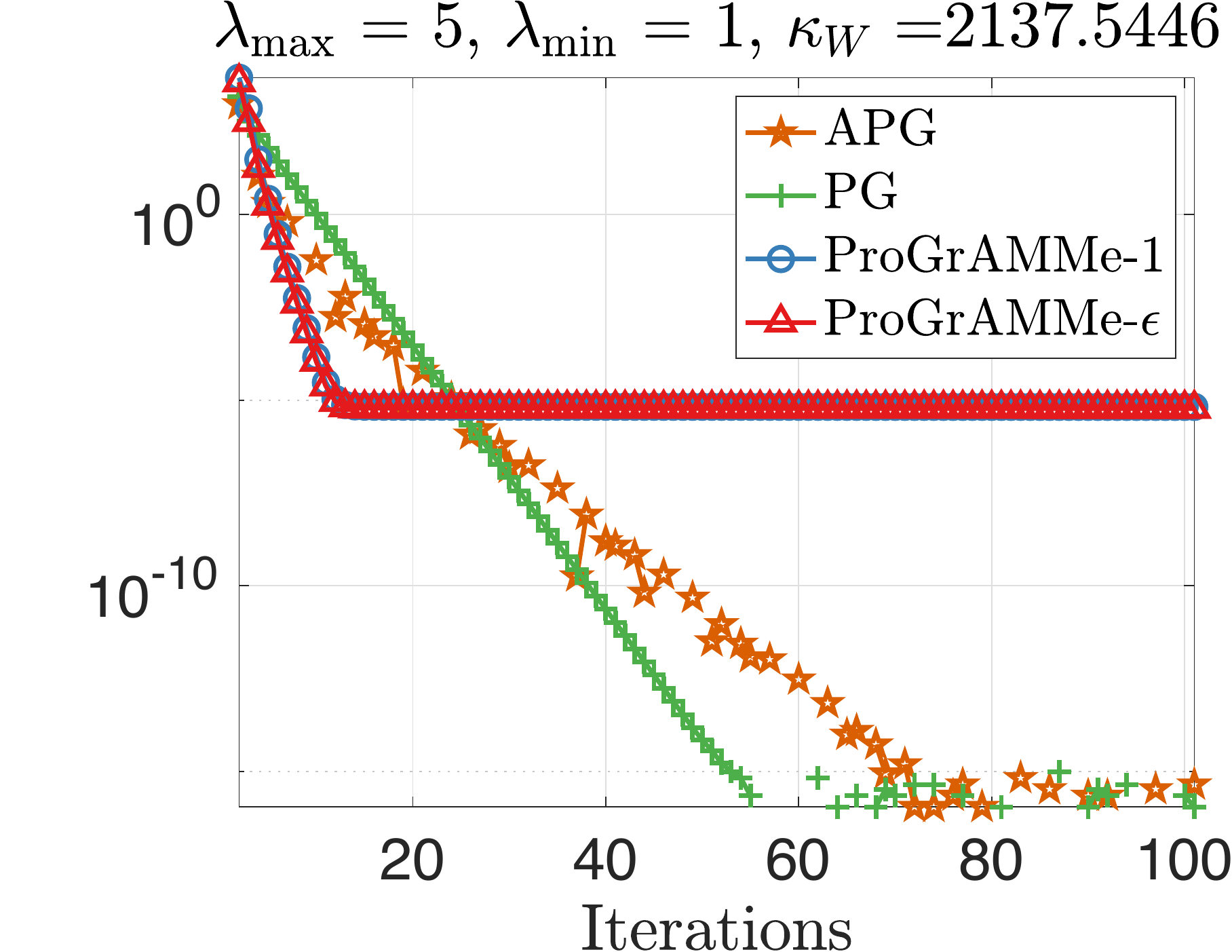} \\
%%%%%%%%%%
\caption{{Convergence in terms of functional values evaluated at consecutive iterates, that is, $\Phi(X_k)-\Phi(X_{k-1})$ vs. iterations of proximal gradient \eqref{prblm:wlr_nuclear}~(Direct PG), accelerated proximal gradient-II~(APG-II), \program-$1$, and \program-$\epsilon$ applied to original problem. Note that, $\Phi(X)=f(X)+g(X)$; see details in Section \ref{subsection:alg}. We set $\tau = 10^{-2}/{\lambda^2_{\max}}$.}}\label{Fig:conv_fn}
\end{figure}

% Finally, to validate our claim in Remark \ref{remark:cond number} about the convergence rate and changed condition number of the problem, we plot the true and approximated condition number $\kappa_U$ and $\kappa_V$ for both ckasses exact and inexact \program in Figure \ref{Fig:cond_number_true} and \ref{Fig:cond_number_app}. Both figures show that---\myNum{i} For all problems, in spite of a higher condition number, $\kappa_W$ of the weight matrix, $W$ and of the problem $\frac{L}{\mu}$~(by Proposition \ref{prop:convexity}), the values of $\kappa_U$ and $\kappa_V$ for exact and inexact \program are moderate and are in the same order; that, \myNum{ii} the true and approximated values of $\kappa_U$ and $\kappa_V$ are close in all cases indicating the approximation in Remark \ref{remark:cond number}-\myNum{ii} is valid. %These empirical evidences solidify our claim. 

\begin{figure}[!t]
\centering
    \includegraphics[width=0.245\linewidth]{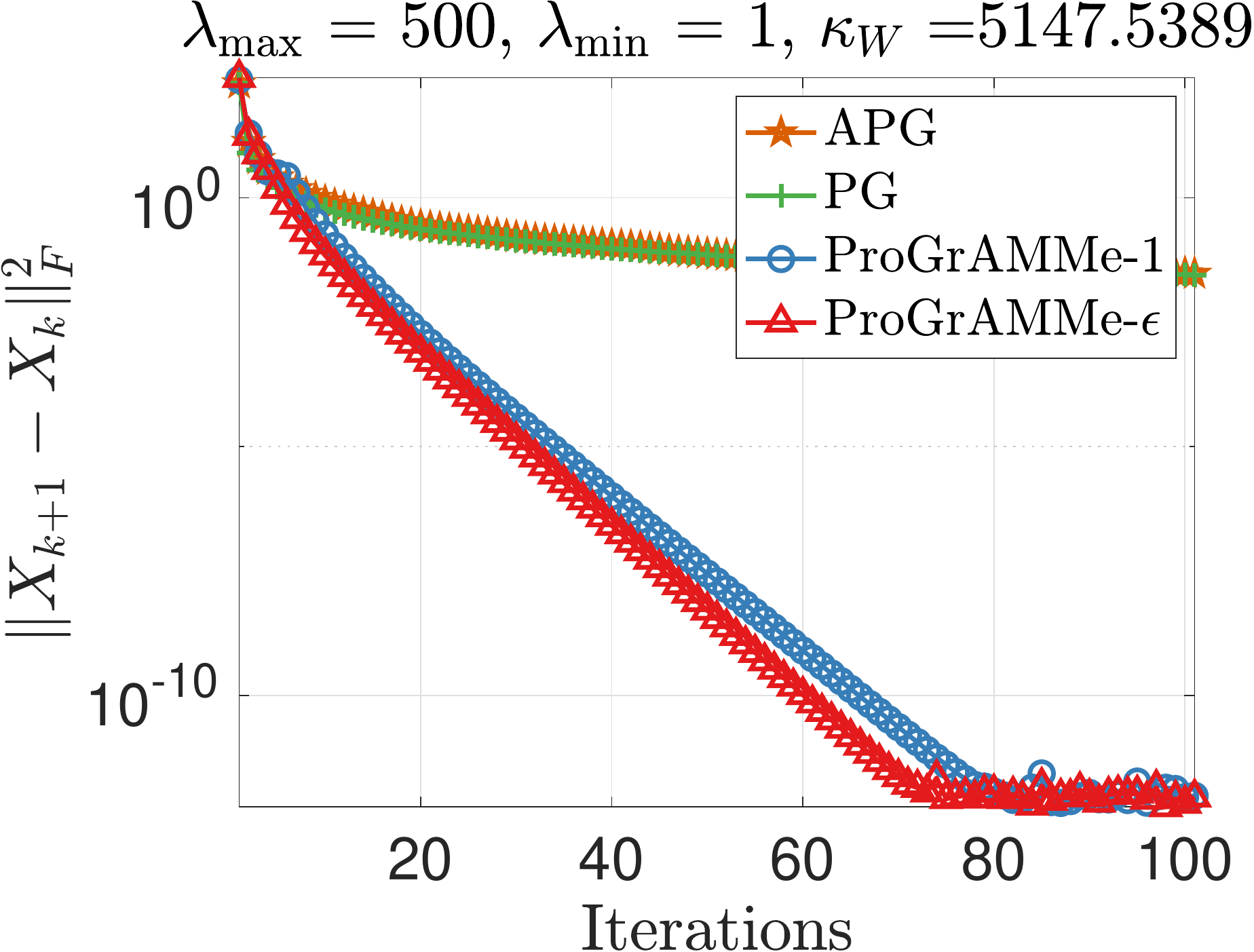} \hspace{-4pt}
    \includegraphics[width=0.245\linewidth]{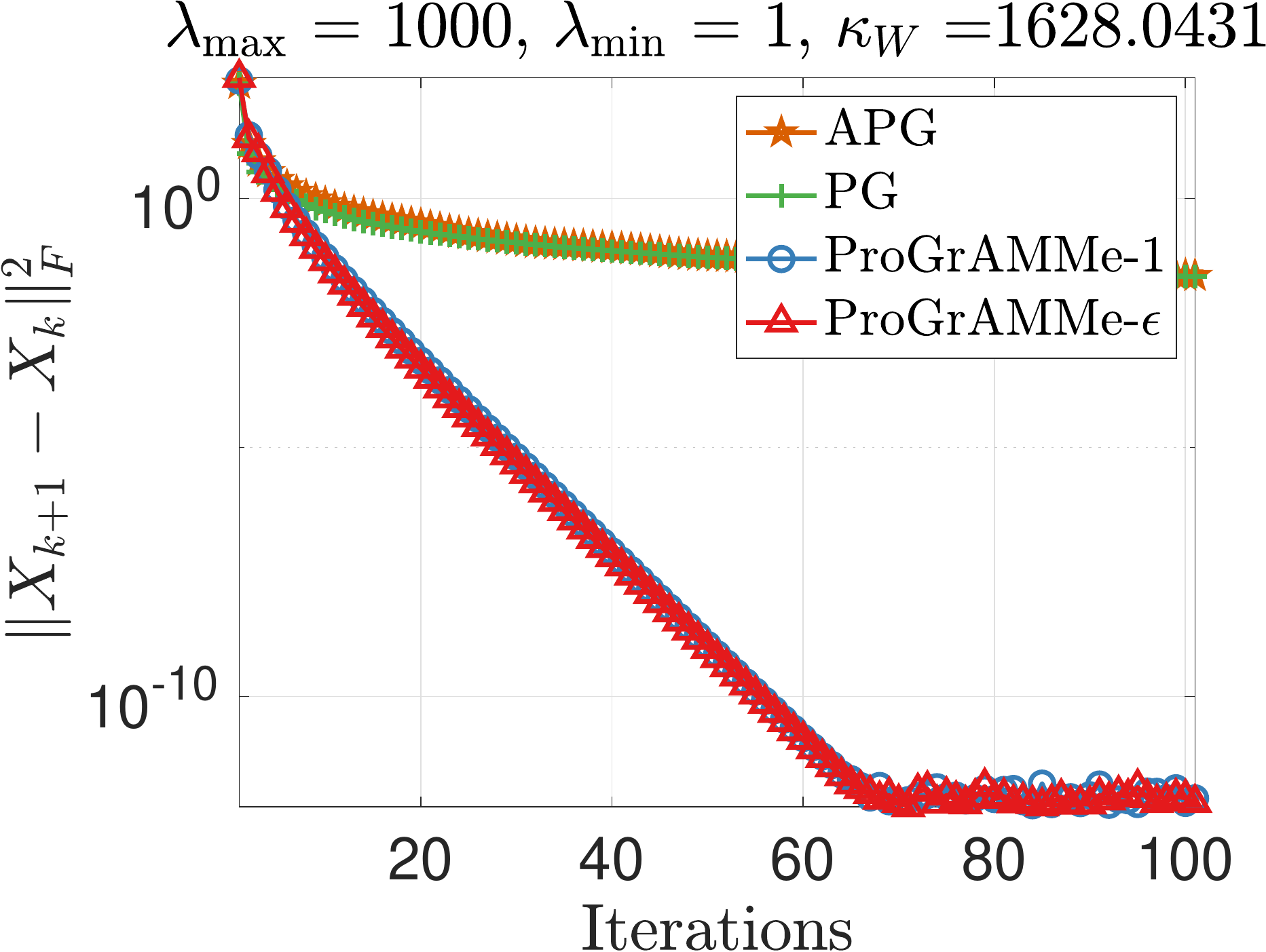} \hspace{-4pt}
    \includegraphics[width=0.245\linewidth]{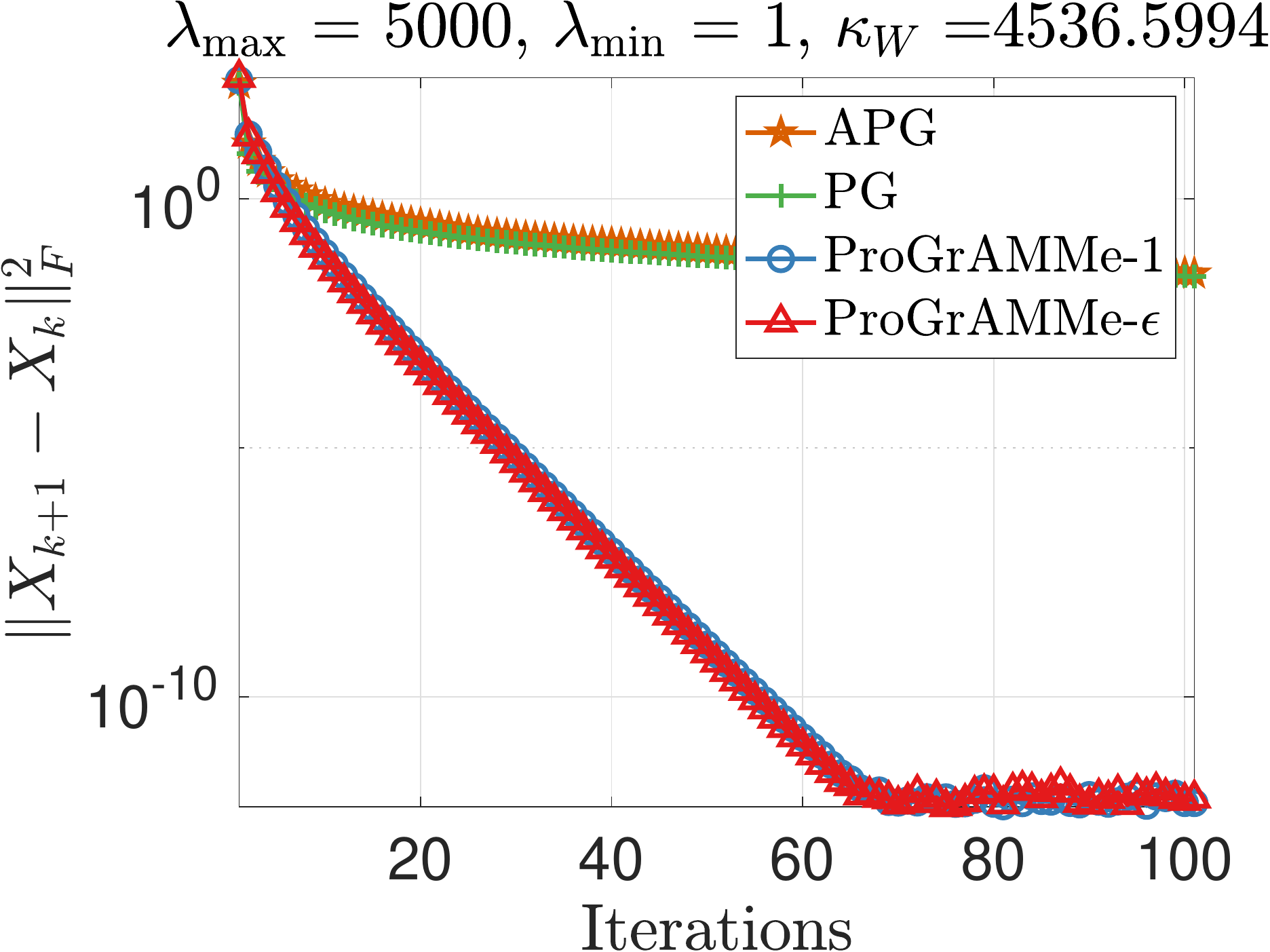} \hspace{-4pt}
    \includegraphics[width=0.245\linewidth]{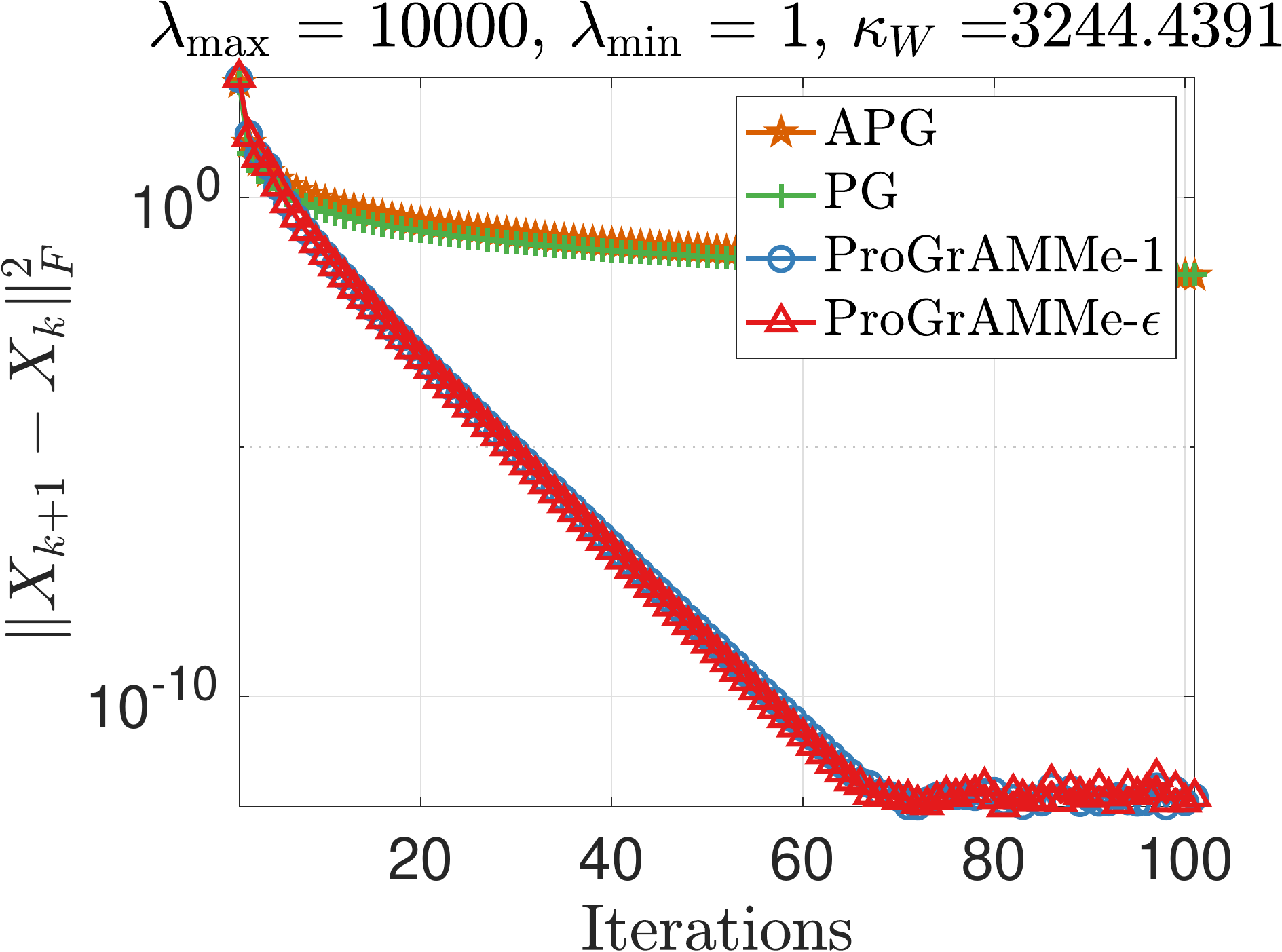} \\
%%%%%%%%%%
    \includegraphics[width=0.245\linewidth]{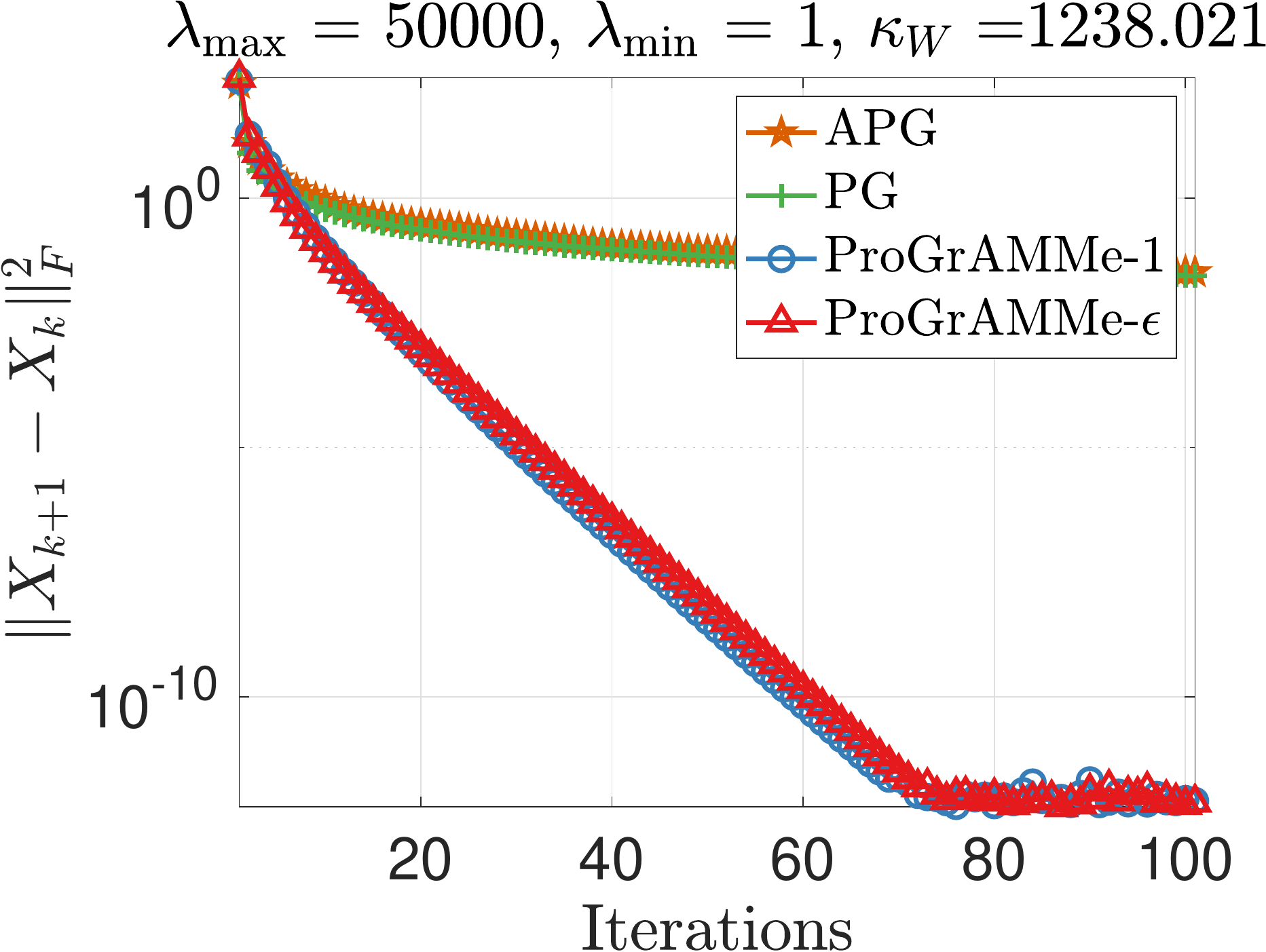} \hspace{-4pt}
    \includegraphics[width=0.245\linewidth]{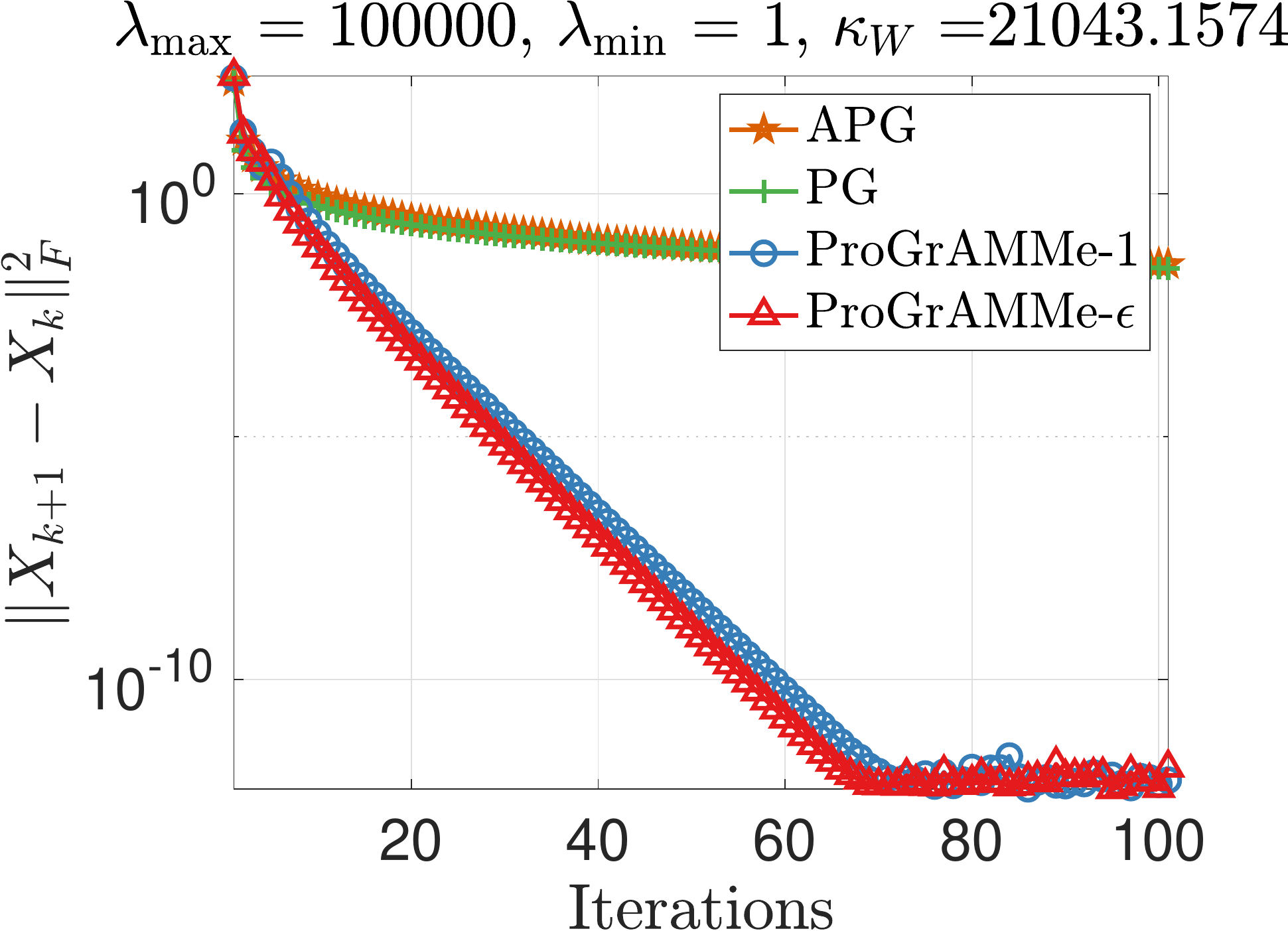} \hspace{-4pt}
    \includegraphics[width=0.245\linewidth]{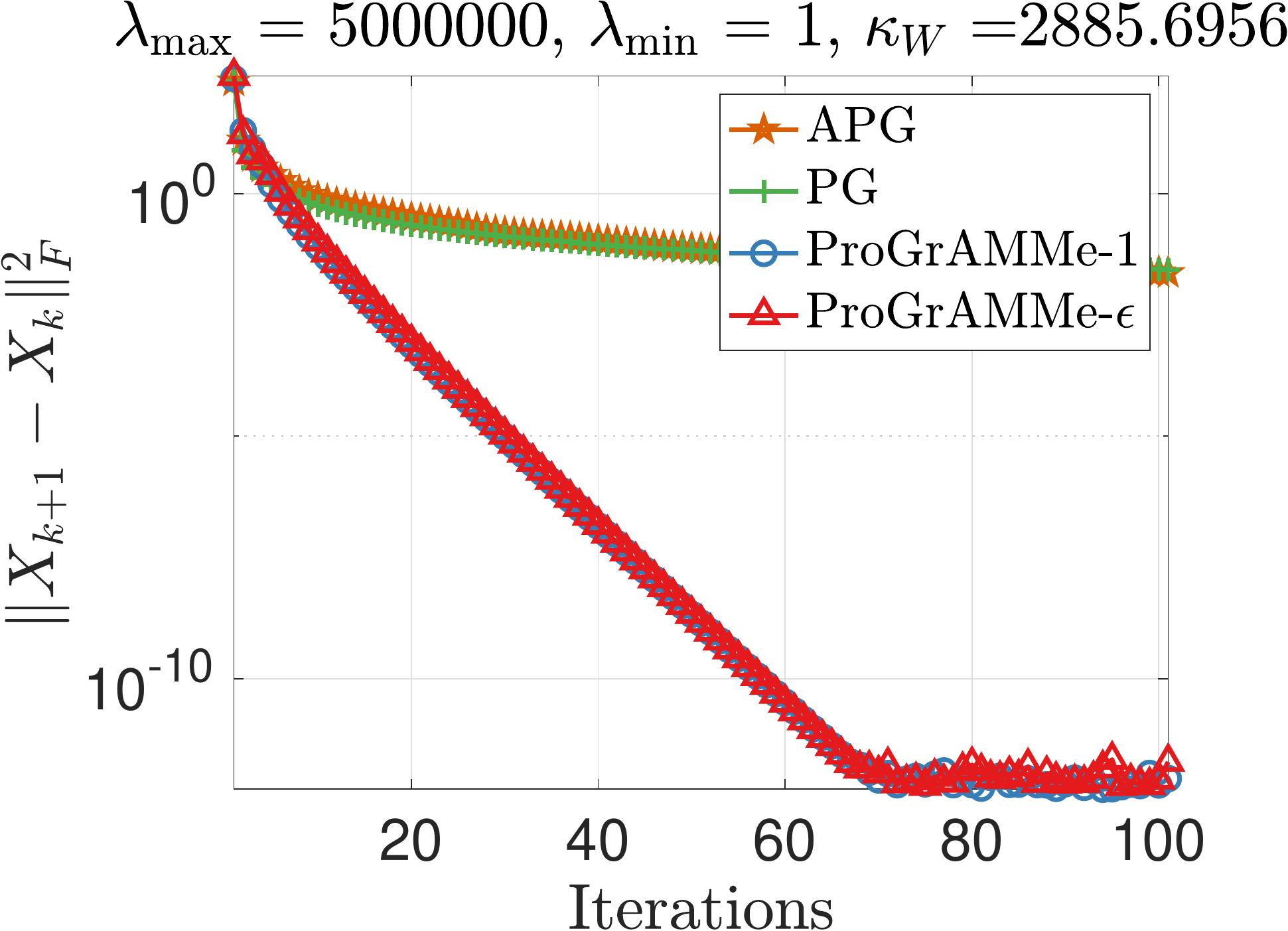} \hspace{-4pt}
    \includegraphics[width=0.245\linewidth]{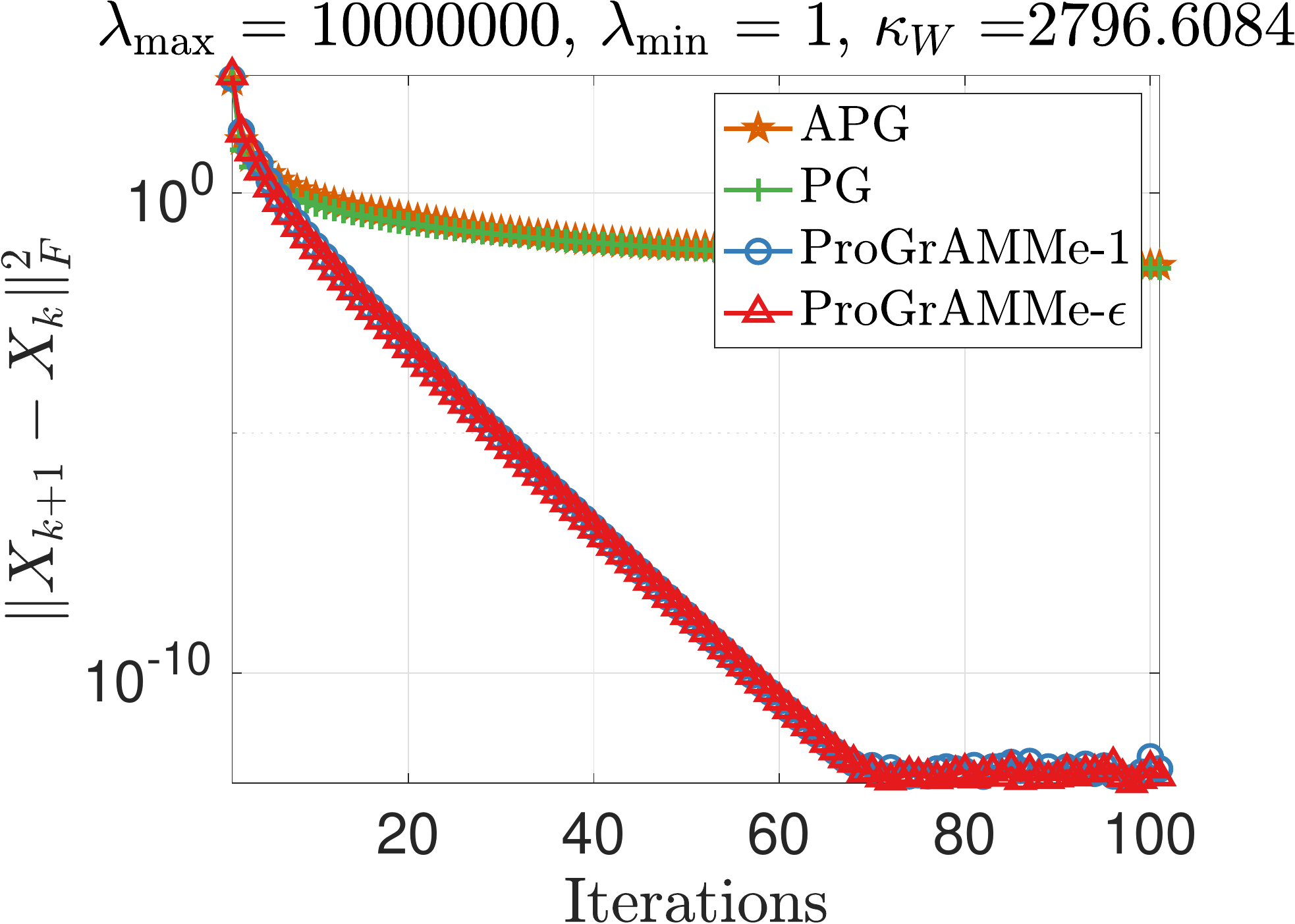} \\
%%%%%%%%%%
    \includegraphics[width=0.245\linewidth]{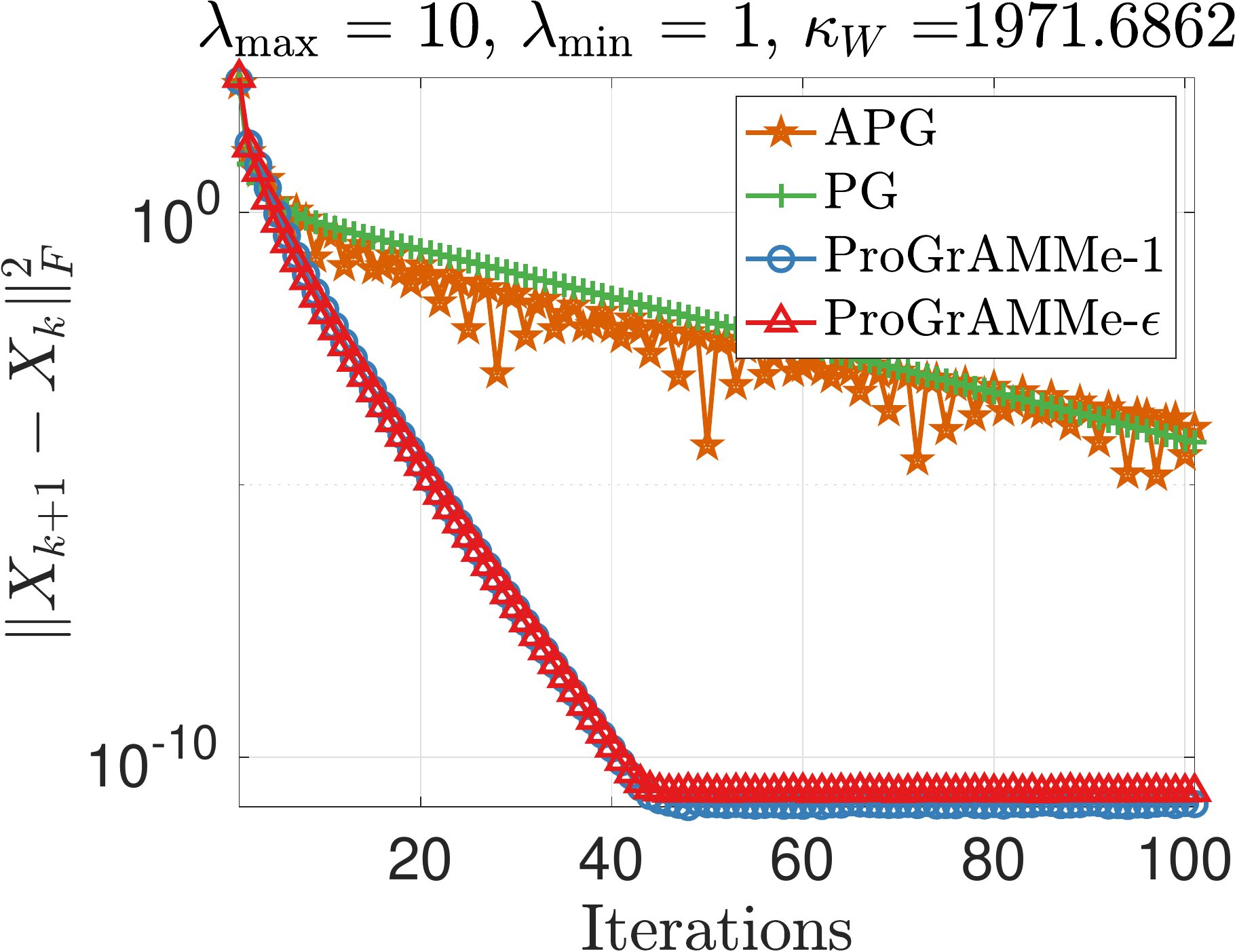} \hspace{-4pt}
    \includegraphics[width=0.245\linewidth]{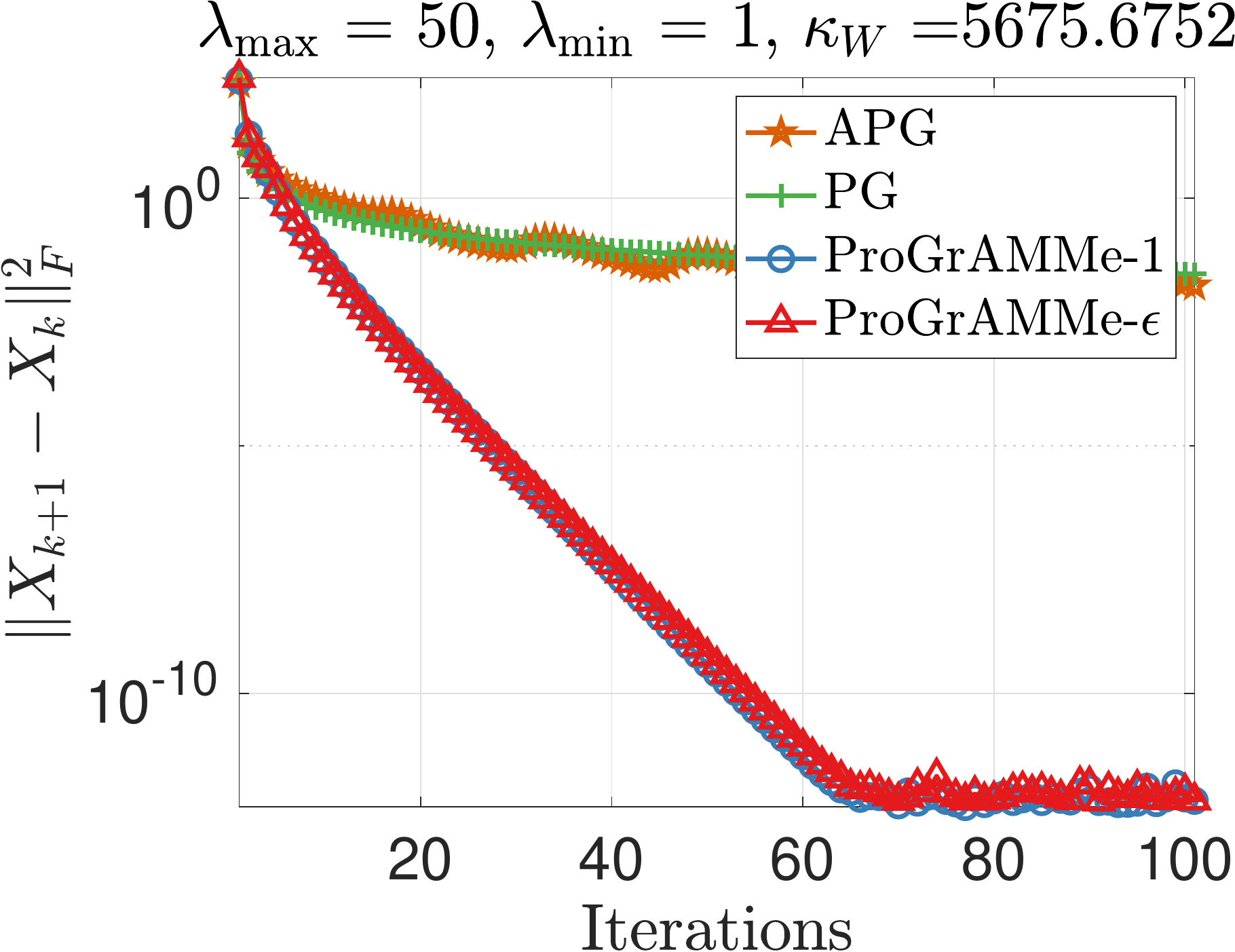} \hspace{-4pt}
    \includegraphics[width=0.245\linewidth]{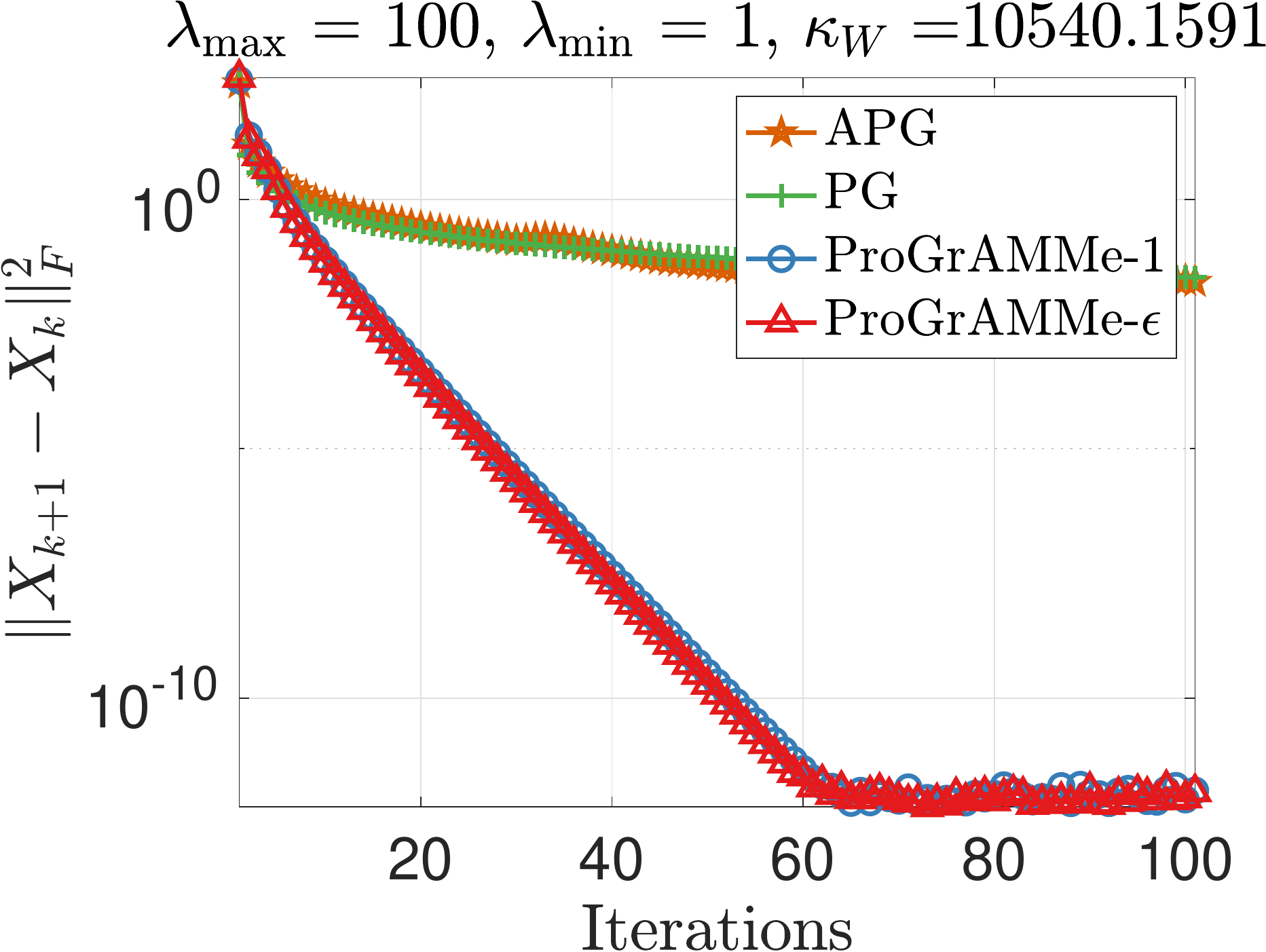} \hspace{-4pt}
    %\includegraphics[width=0.245\linewidth]{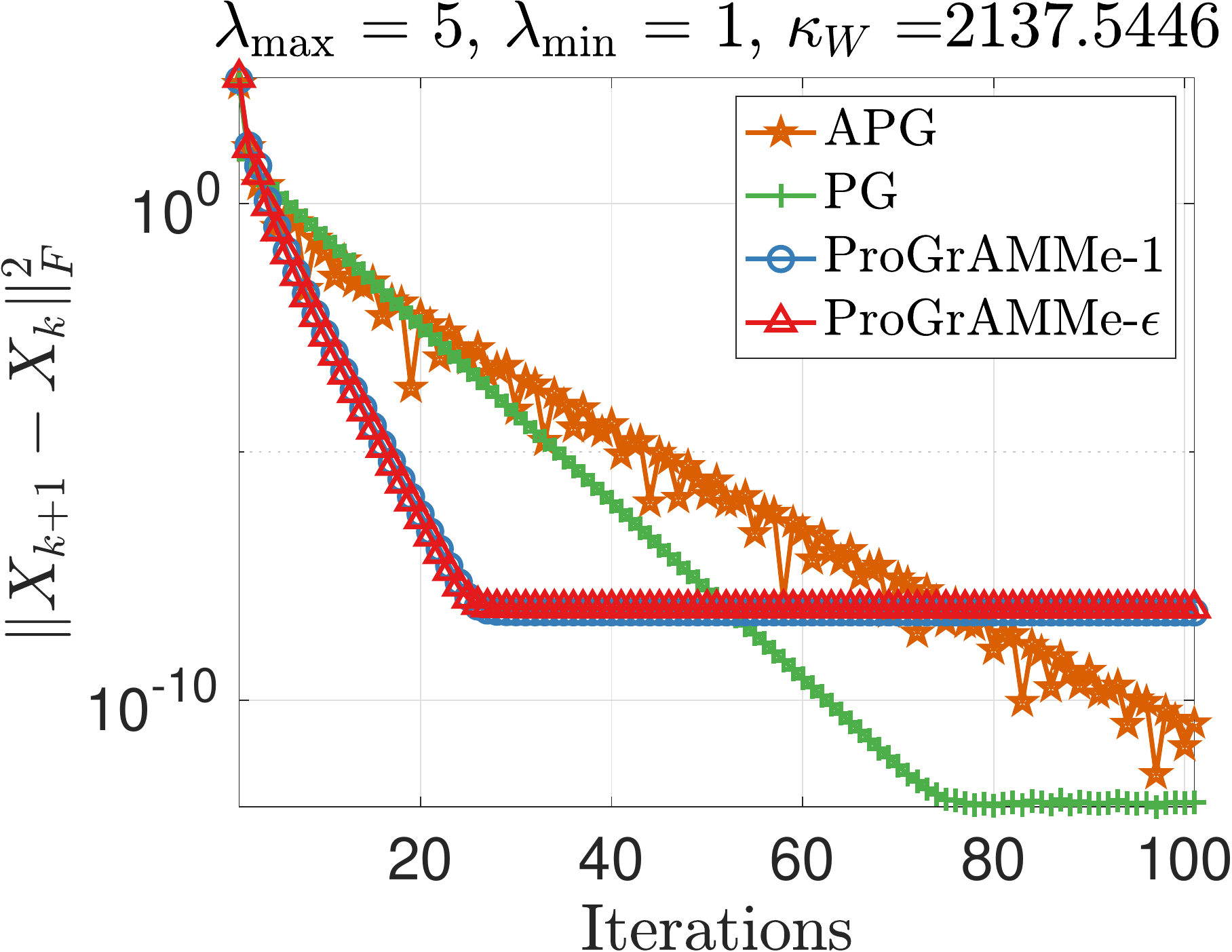} \\[-1mm]
%%%%%%%%%%
\caption{{Convergence in terms of difference between consecutive iterates ($\|X_{k+1}-X_k\|$) vs. of proximal gradient \eqref{prblm:wlr_nuclear}~(Direct PG), accelerated proximal gradient-II~(APG-II), \program-$\epsilon$ (Algorithm \ref{alg:alg}), and \program-$1$ applied to original problem. We set $\tau = 10^{-2}/{\lambda^2_{\max}}$.}}\label{Fig:conv_iter}
\end{figure}

\subsection{Matrix completion with missing data---Effect of more global iterations}\label{sec:appendix_weather}
In this part, we conduct extensive tests of the power-grid missing data problem. For first set of experiments, we ran the methods until the relative error, $\|X_{k+1}-X_k\|/\|X_k\|$ or consecutive iteration error, $\|X_{k+1}-X_k\|$ is less than the machine precision or a maximum number of iterations~(500) is reached, whichever happens first. Eventually, Figure \ref{Fig:fidelity_2} shows that the performance of \program-$1$ improves as it runs for more global iterations although it has a fractions of the execution time compare to the other methods. Next, in Figure \ref{Fig:Fidelity_3} we ran more samples (50) with the same stopping criteria as before but with an increased number of iterations (1000). From Figure \ref{Fig:Fidelity_3} we find that for fidelity inside the sample, \program-$1$ performs better than the two previously best performing methods---EM and BIRSVD. {Nevertheless, for fidelity outside the sample BIRSVD is still the best, but the behavior of \program~improves compared to the previous cases.}

\begin{figure}[!t]
\centering
\subfloat[ Fidelity within data]{ \includegraphics[width=0.318\linewidth]{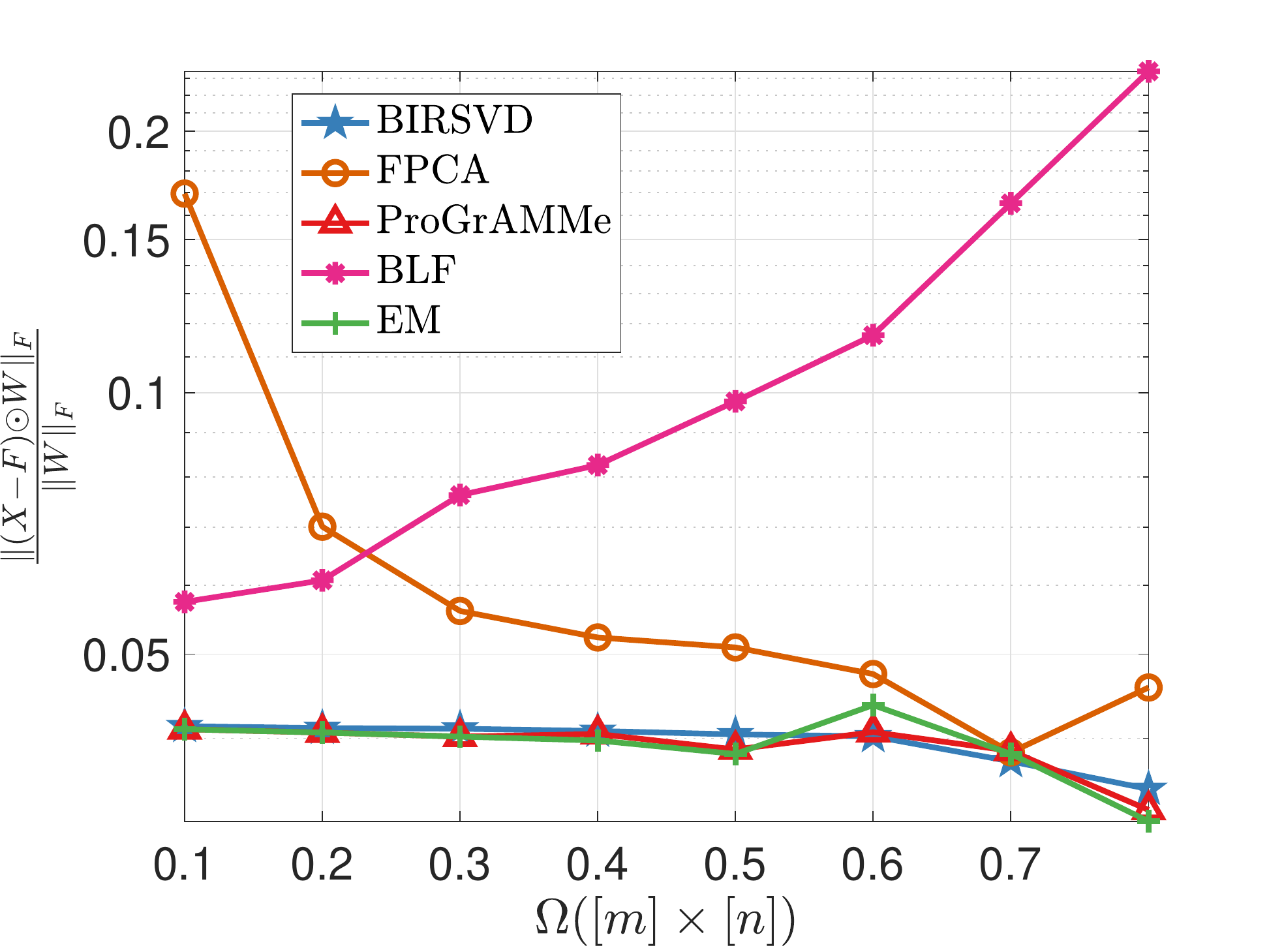} } \hspace{-1.25ex}
\subfloat[ Fidelity out of data]{ \includegraphics[width=0.318\linewidth]{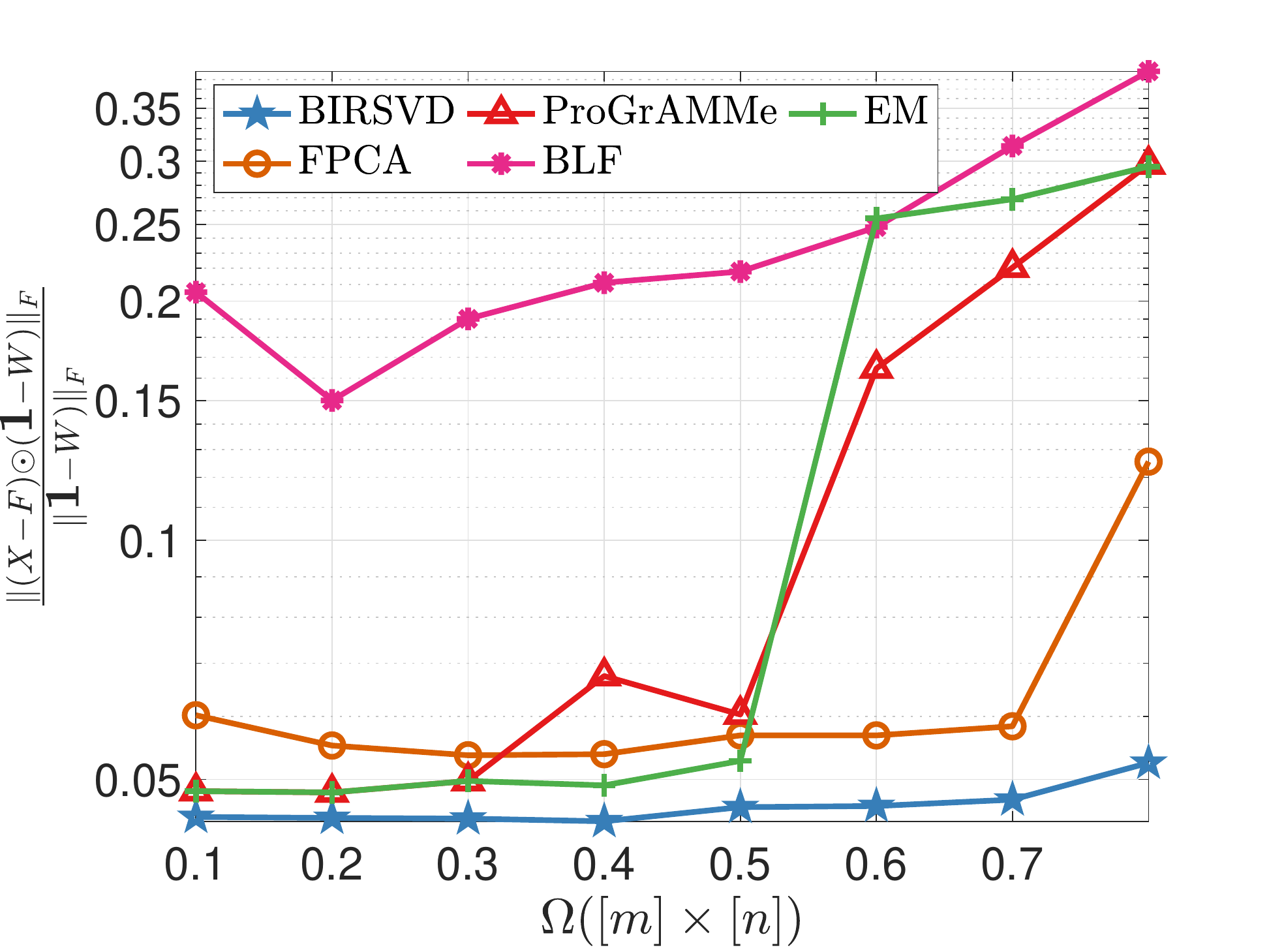} } \hspace{-1.25ex}
\subfloat[ Execution time ]{ \includegraphics[width=0.318\linewidth]{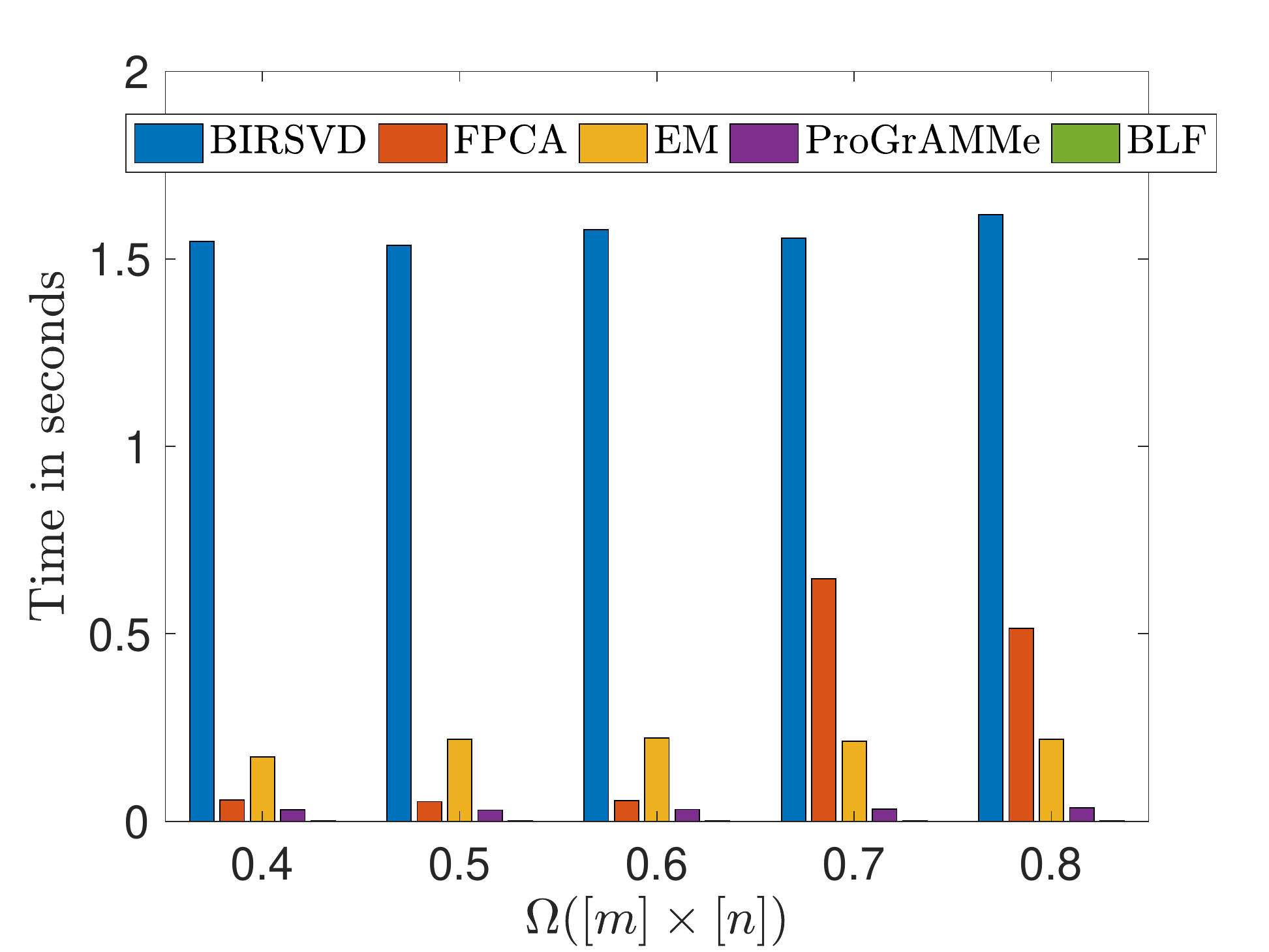} } \\[-2mm]
%%%%%%%%
\caption{{Fidelity within and out of the data, ran for more number of iterations (500).~Here $\Omega$ denotes the percentage of missing data. The last bar diagram shows the execution time of different algorithms for different subsample $\Omega$. Although BLF with $\ell_2$ loss has the least execution time, its performance is not so good for this set of experiments.}}\label{Fig:fidelity_2}
\end{figure}

%\begin{figure}[!ht]
%\centering
%\subfloat[ ]{ \includegraphics[width=0.75\textwidth]{} } \\[-0.5pt]
%\subfloat[ ]{ \includegraphics[width=0.75\textwidth]{} } \\[-1mm]
%    %%%%%%%%
%\caption{...}\label{...}
%\end{figure}

\begin{figure}[!t]
\centering
\subfloat[ Fidelity within the sample ]{ \includegraphics[width=0.45\textwidth]{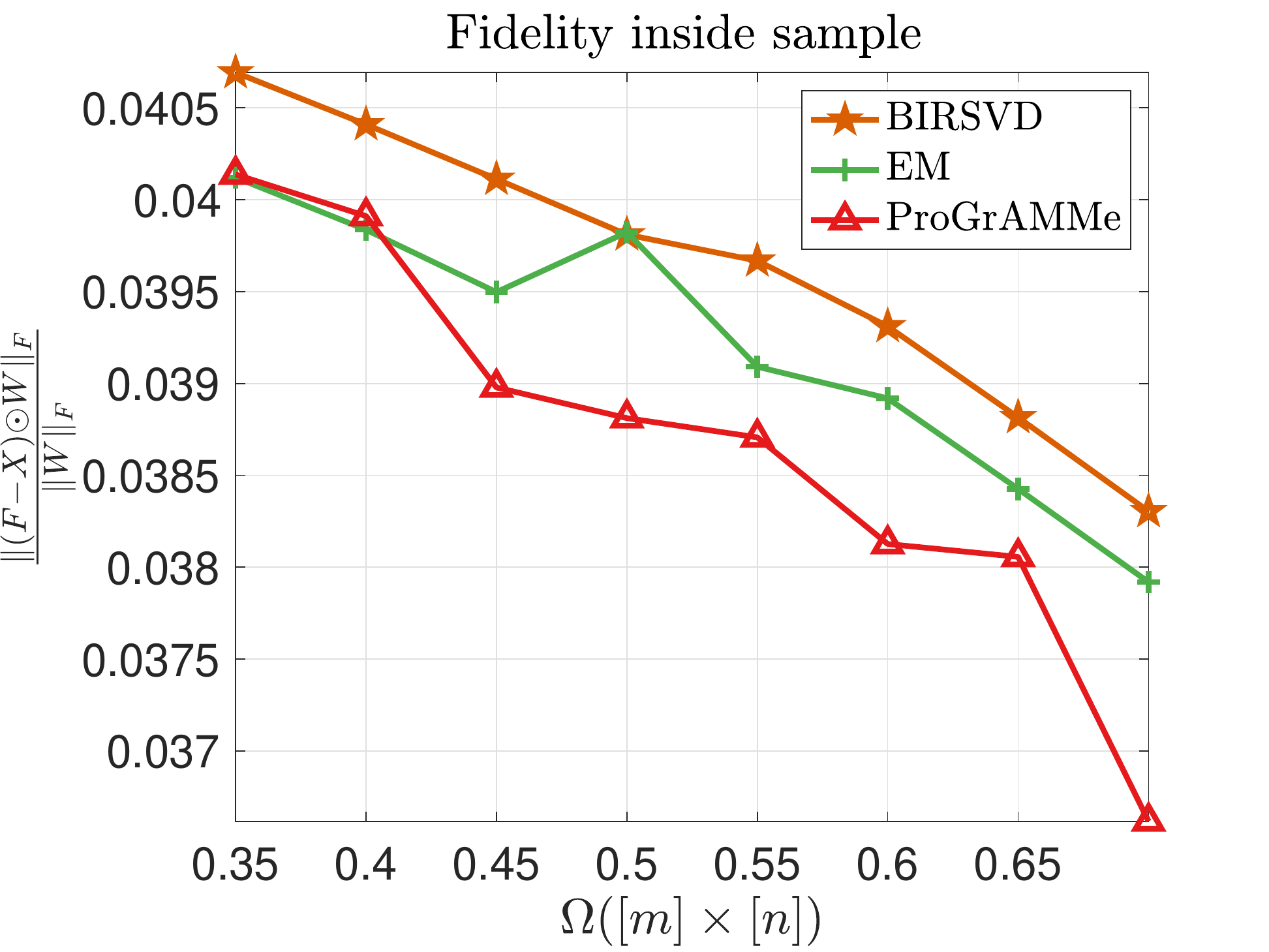} } \hspace{2pt}
\subfloat[ Fidelity out of the sample ]{ \includegraphics[width=0.45\textwidth]{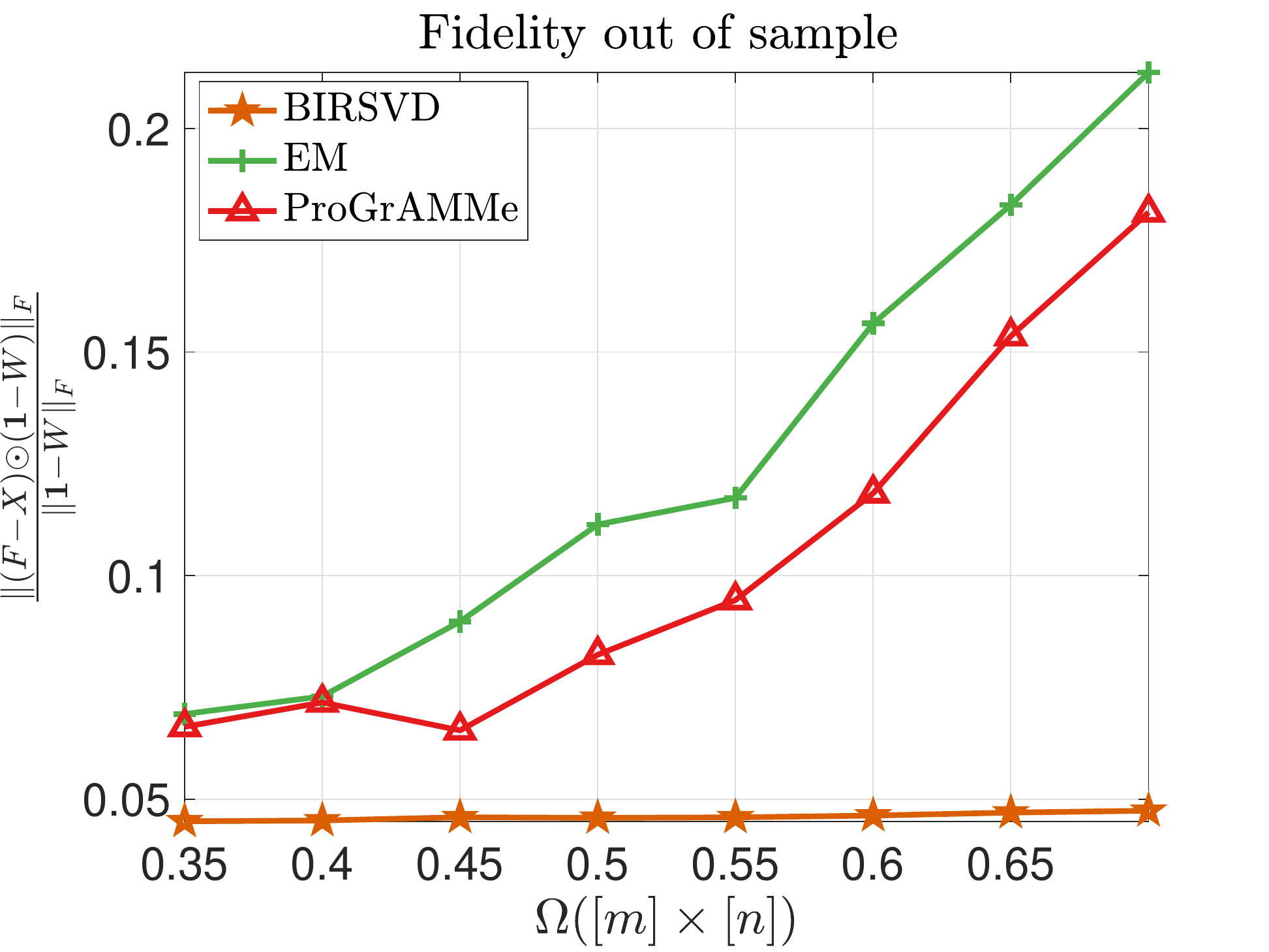} } \\[-2mm]
%%%%%%%
\caption{{Fidelity within and out of the sample, ran for more samples (50) over a higher number of iterations (1000). Here $\Omega$ denotes the percentage of missing data.}}\label{Fig:Fidelity_3}
\end{figure}

\begin{figure}[!t]
    \centering
    \includegraphics[width=\linewidth, height=4in]{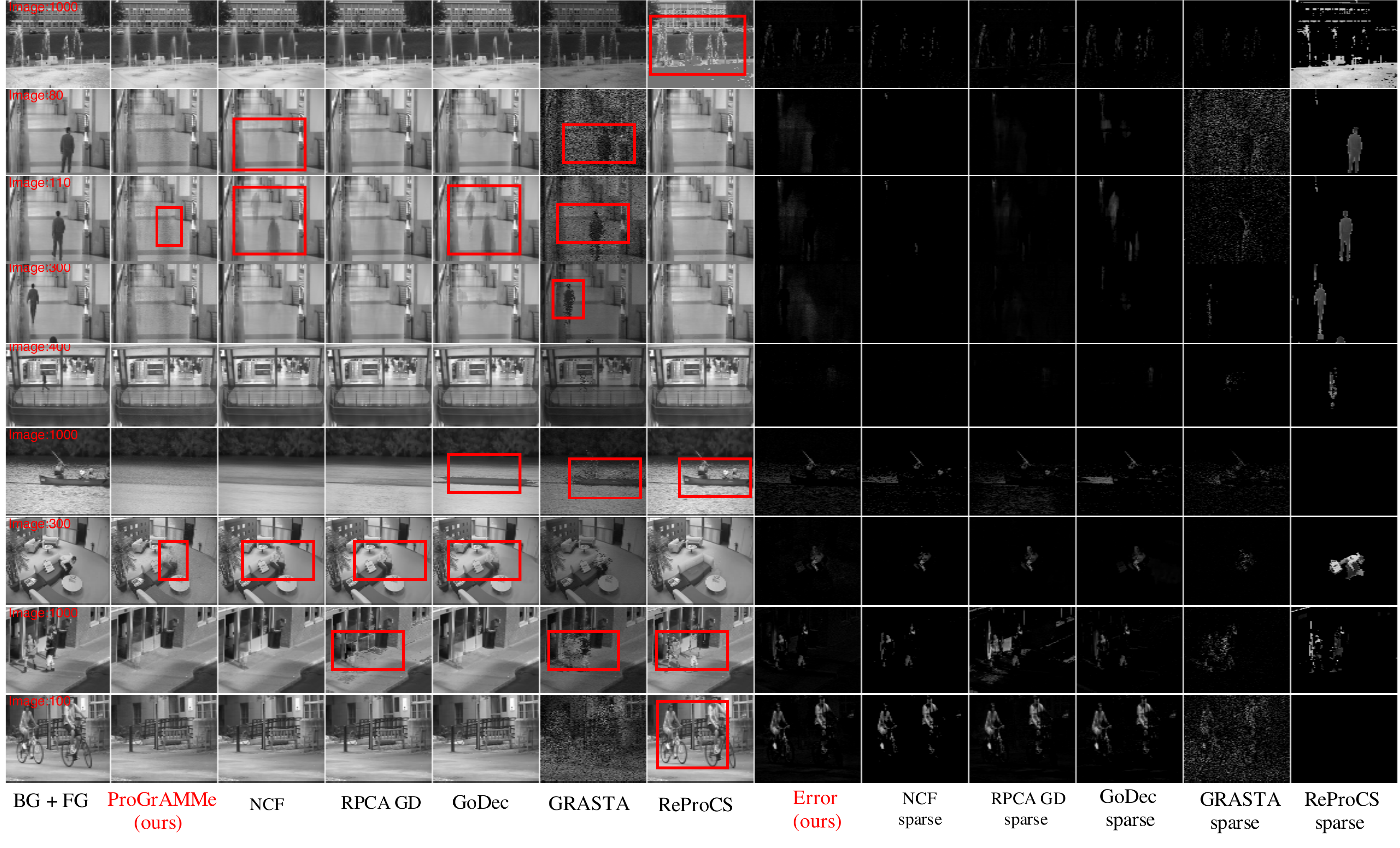}
            %\vspace{-5pt}
\caption{{Sample video frames from the CDNet 2014 and SBI dadasets.~\program~provides a visually high quality background for almost all sequences.~The red bounding boxes in recovered BG denote shadows, ghosting effects of FG objects, static FG object etc.}}\label{Fig:bg_2}
\end{figure}

\begin{figure}
\centering
\subfloat[ Mean SSIM ]{ \includegraphics[width=0.45\textwidth]{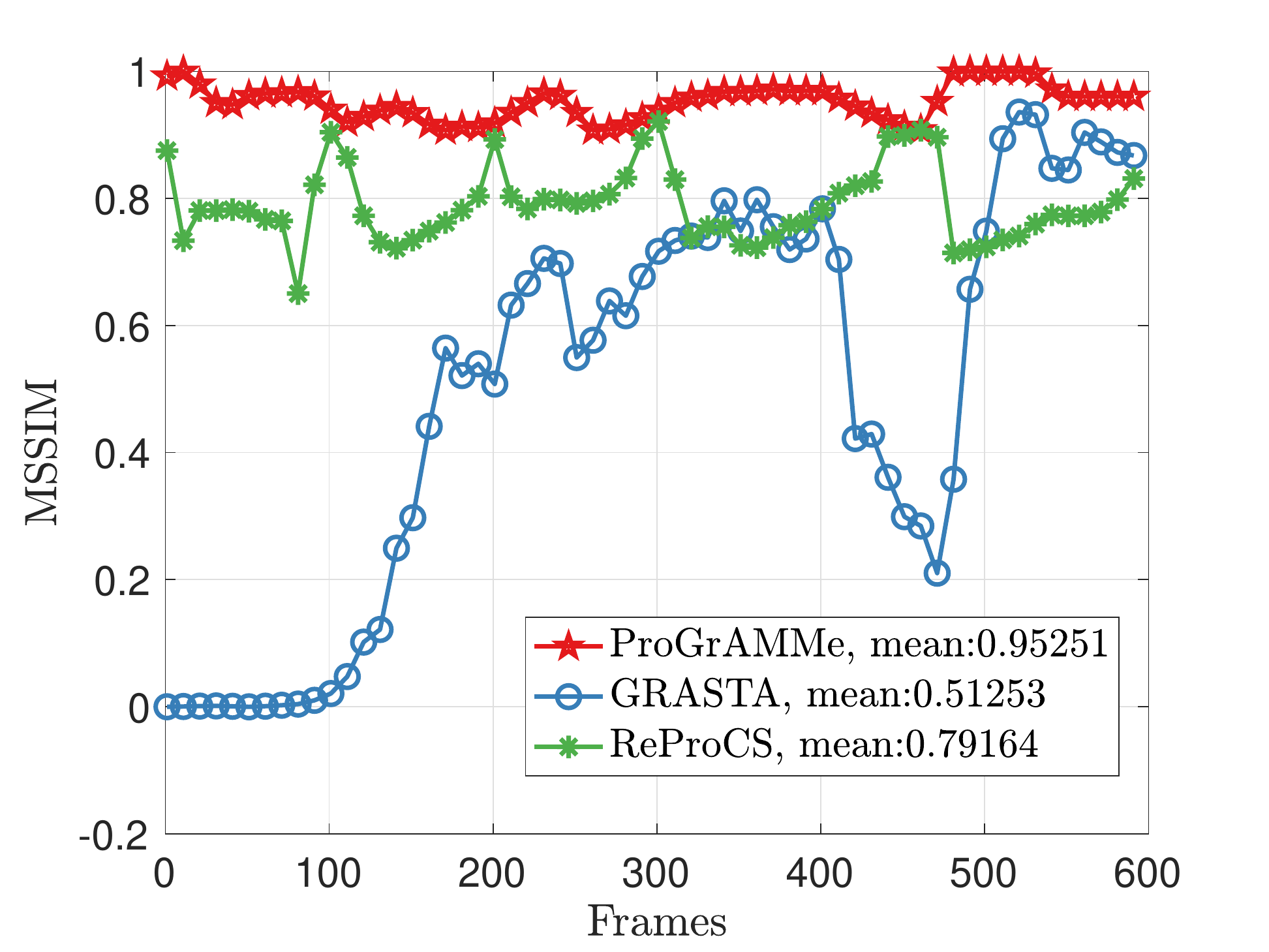} } \hspace{2pt}
\subfloat[ PSNR ]{ \includegraphics[width=0.45\textwidth]{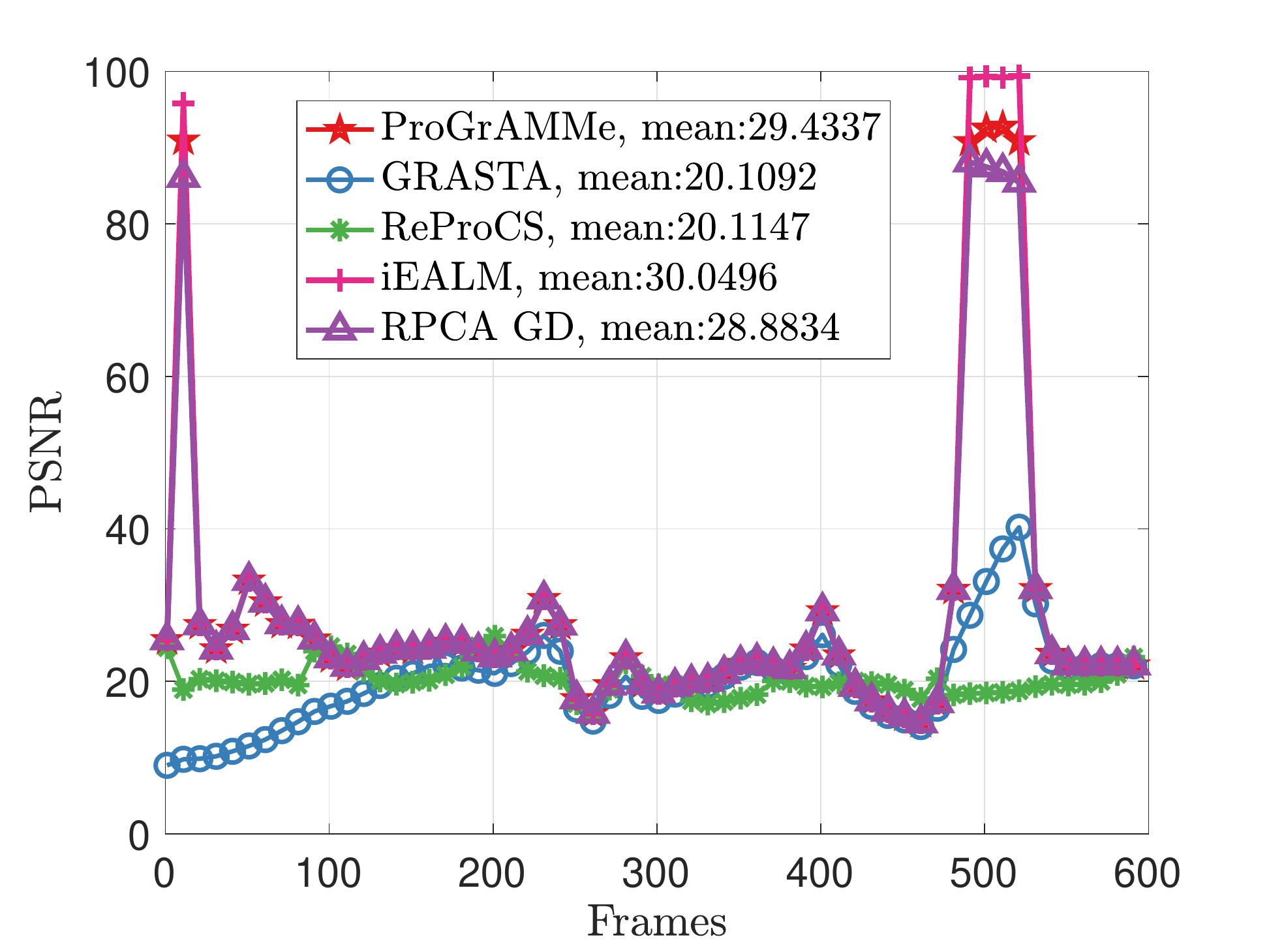} } \\[-2mm]
%%%%%%%%
\caption{{Quantitative comparison of different algorithms on Stuttgart {\tt Basic} sequence. We compare the recovered foreground by different methods with respect to the foreground GT available for each frame on two different metrics: mean SSIM and PSNR.}}\label{fig:quant_bg}
\end{figure}

\begin{figure}[!t]
\centering
\subfloat[ $|\Omega|=0.9(m,n)$ ]{ \includegraphics[width=0.3\linewidth]{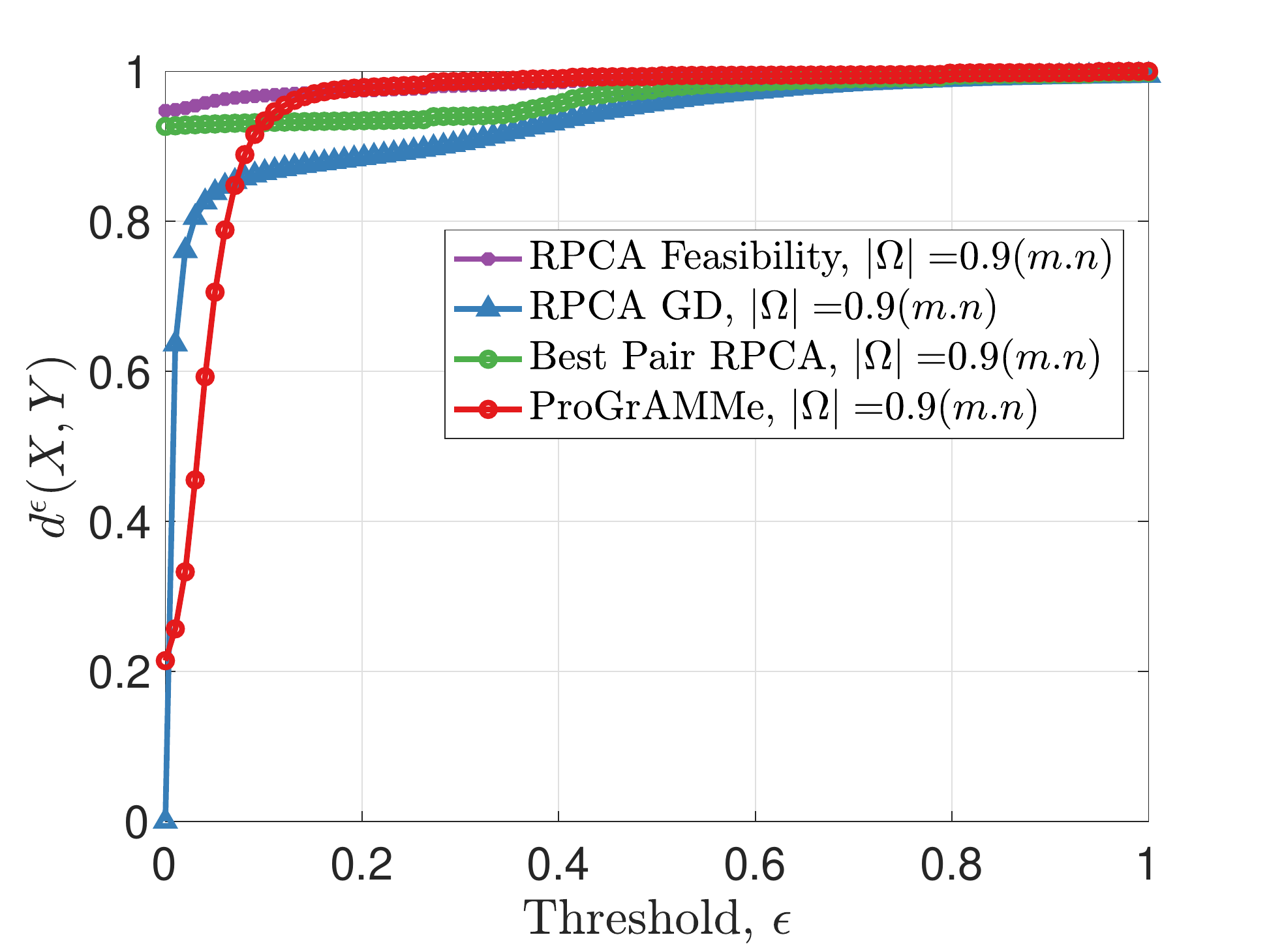} }
\subfloat[ $|\Omega|=0.8(m,n)$ ]{ \includegraphics[width=0.3\linewidth]{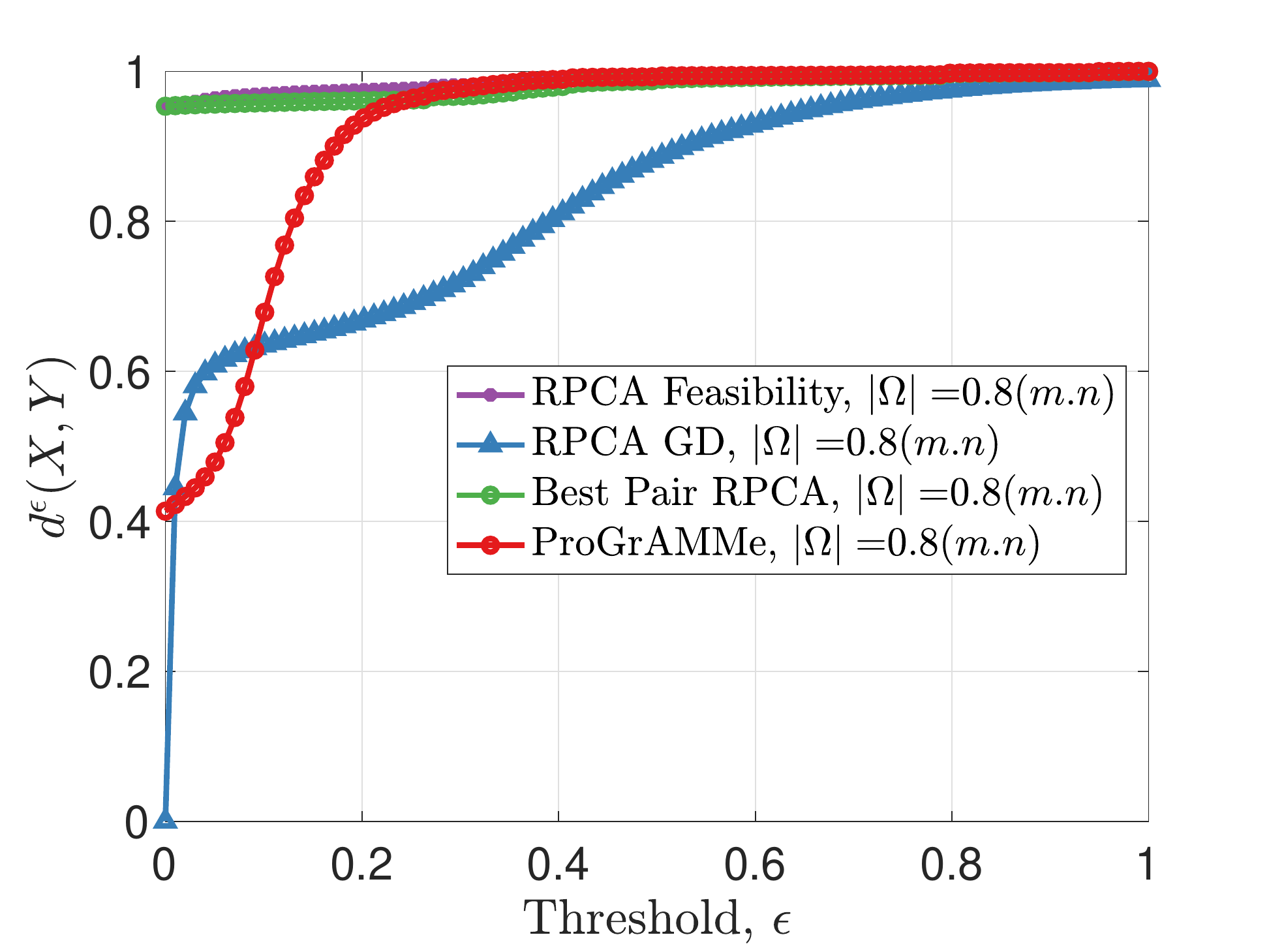} }
\subfloat[ $|\Omega|=0.7(m,n)$ ]{ \includegraphics[width=0.3\linewidth]{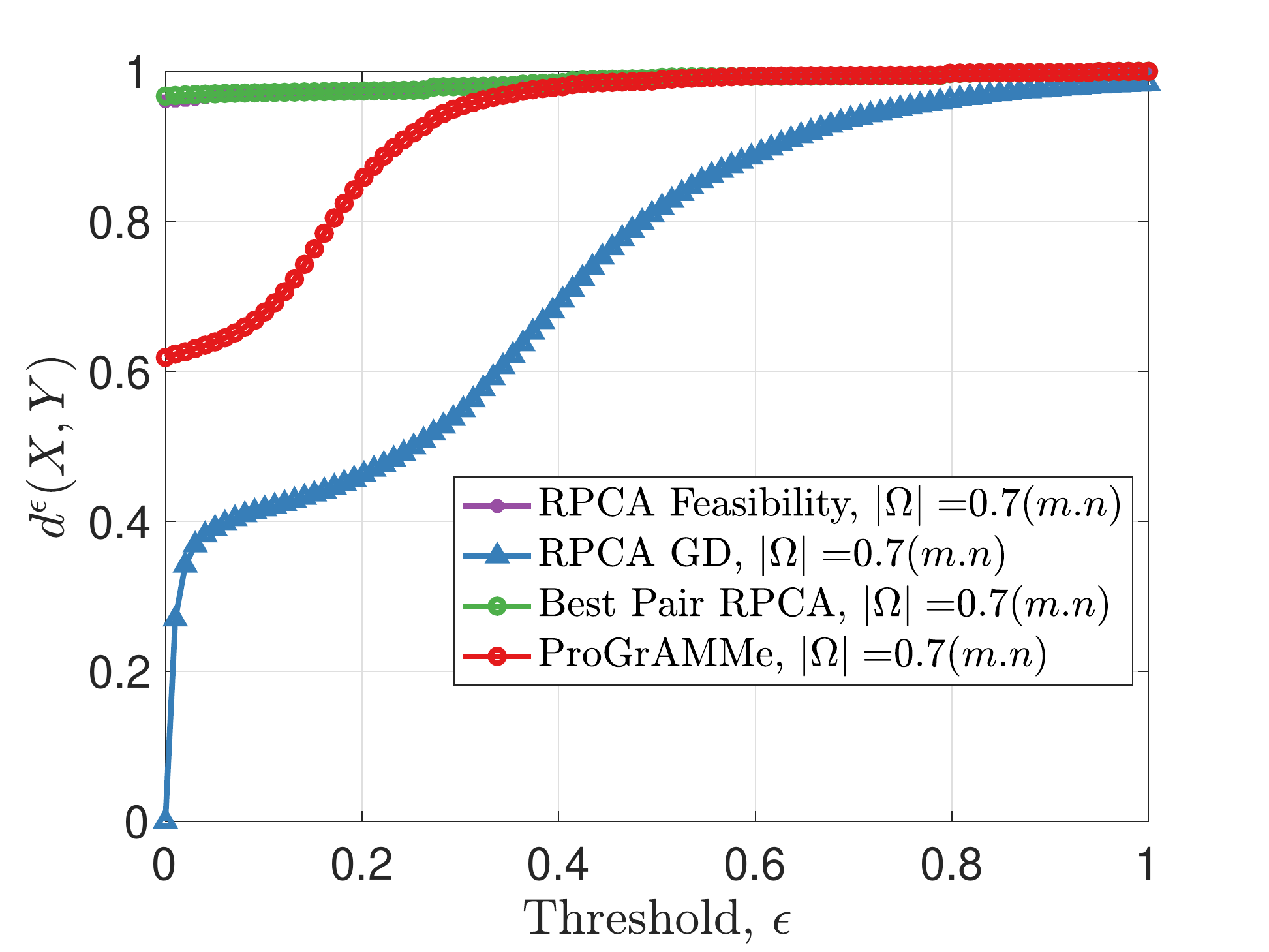} } \\[-2mm]
	%%%%%%%%
\subfloat[ $|\Omega|=0.6(m,n)$ ]{ \includegraphics[width=0.3\linewidth]{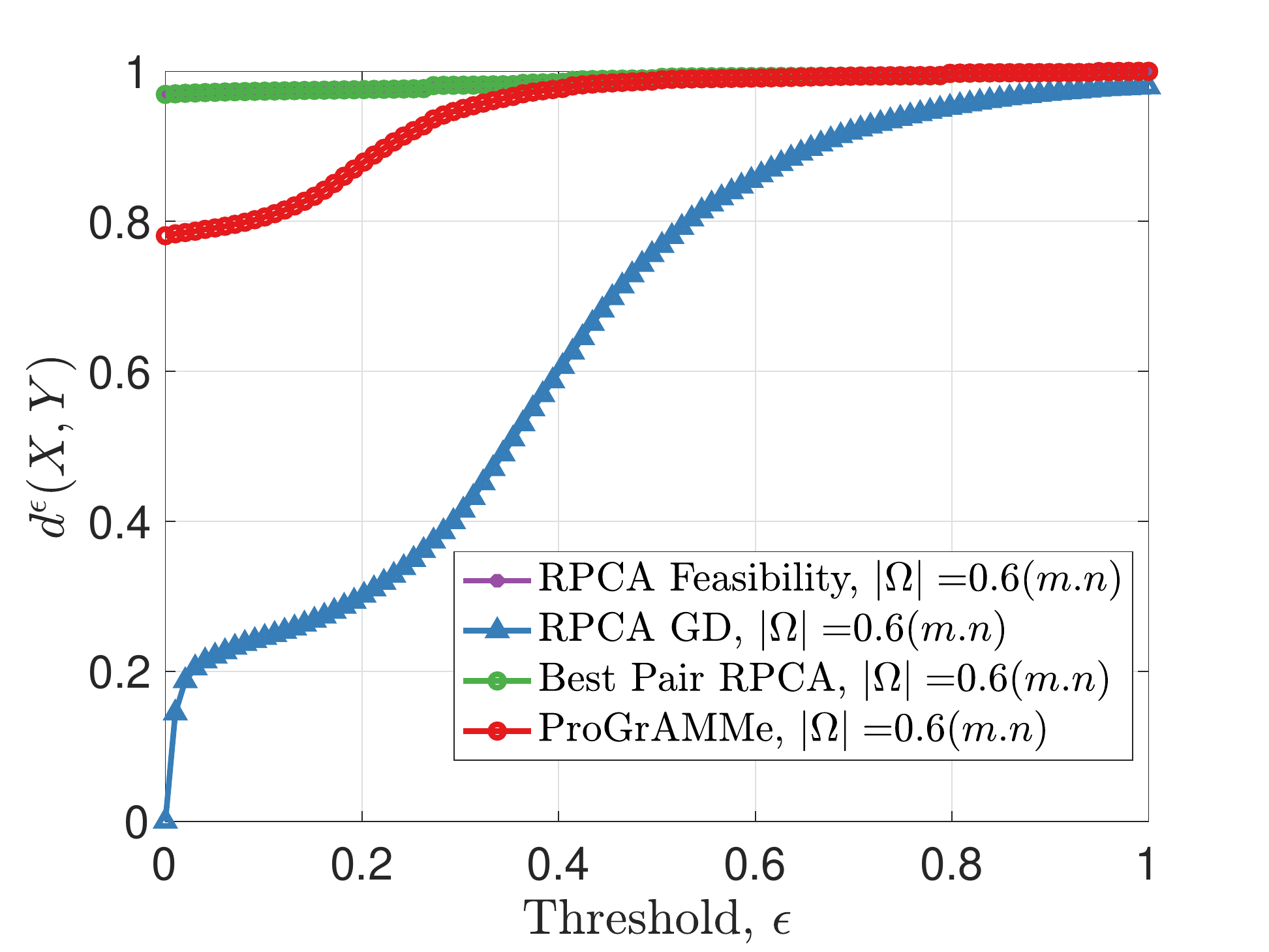} }
\subfloat[ $|\Omega|=0.5(m,n)$ ]{ \includegraphics[width=0.3\linewidth]{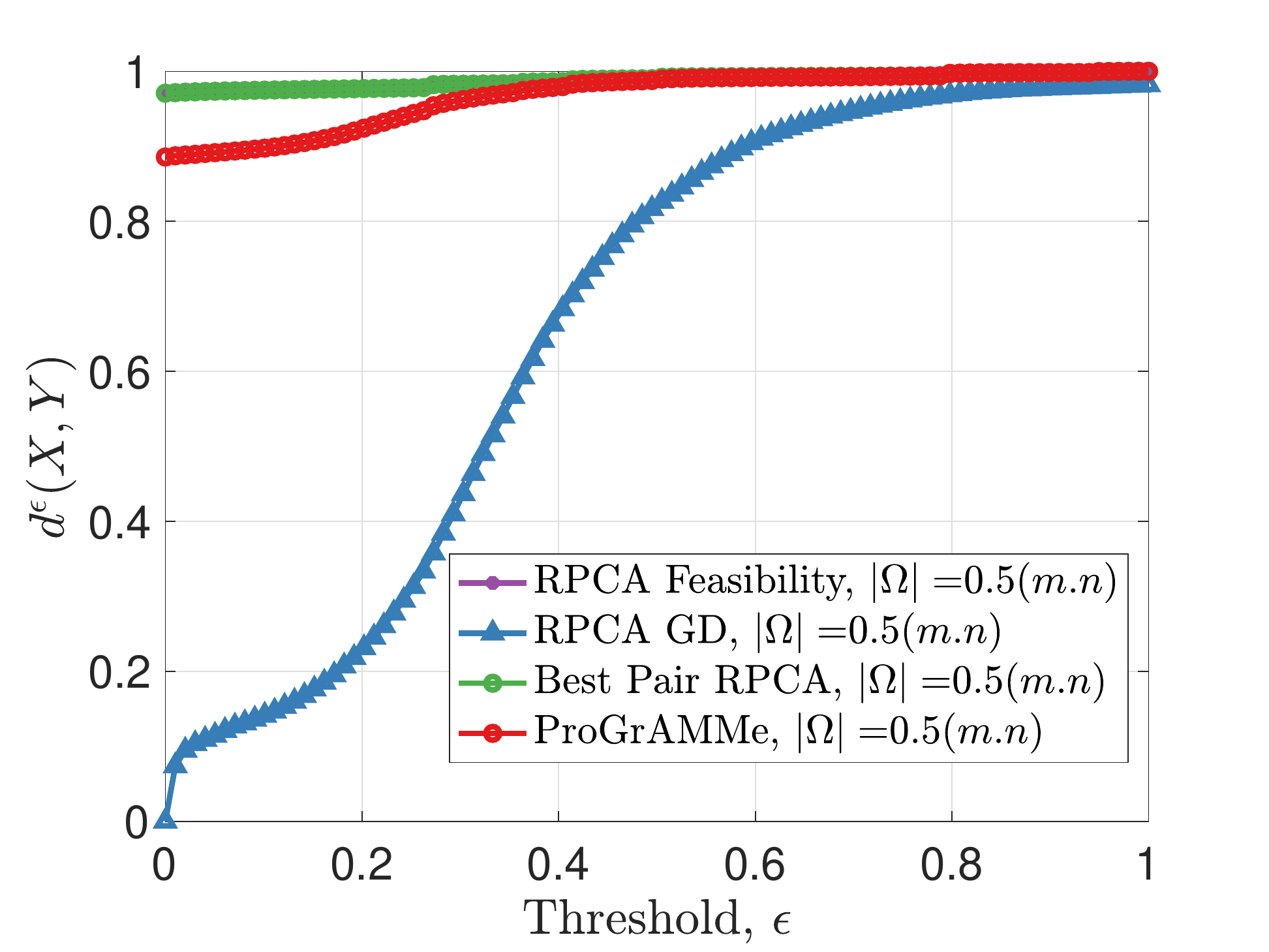} }
\subfloat[ $|\Omega|=0.4(m,n)$ ]{ \includegraphics[width=0.3\linewidth]{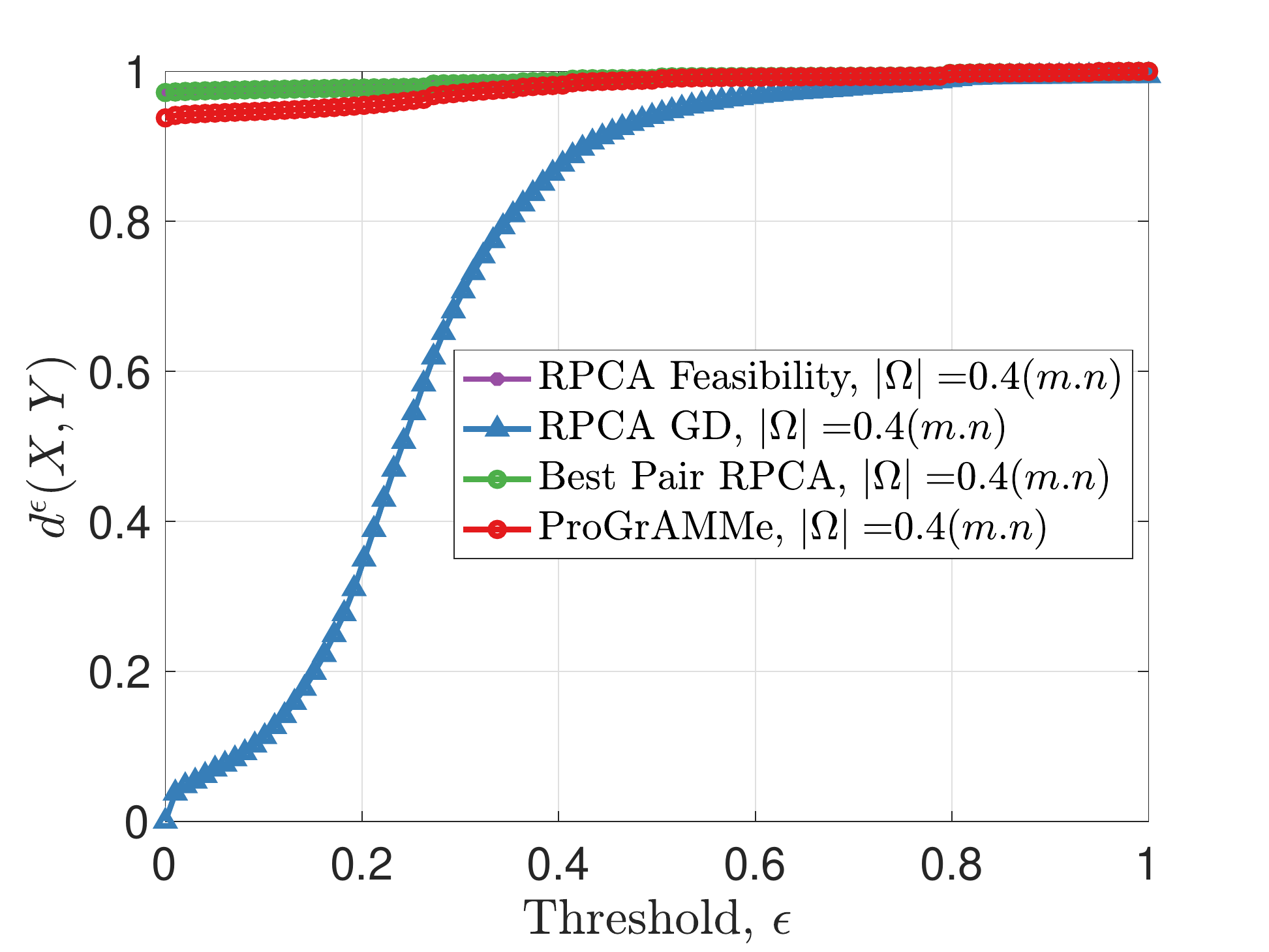} } \\[-2mm]
%%%%%%%%
\subfloat[ $|\Omega|=0.3(m,n)$ ]{ \includegraphics[width=0.35\linewidth]{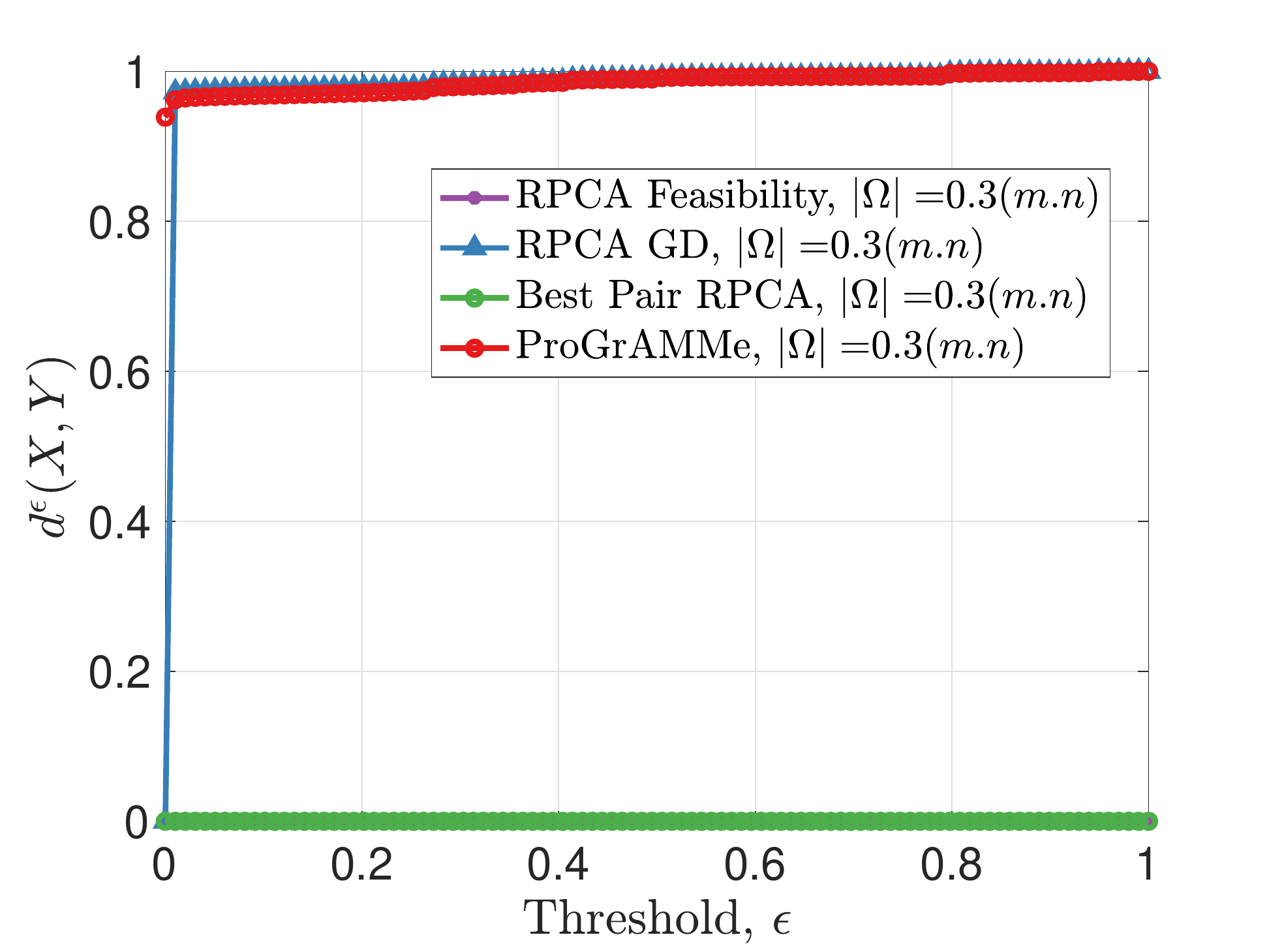} } \hspace{2pt}
\subfloat[ Execution time ]{ \includegraphics[width=0.35\linewidth]{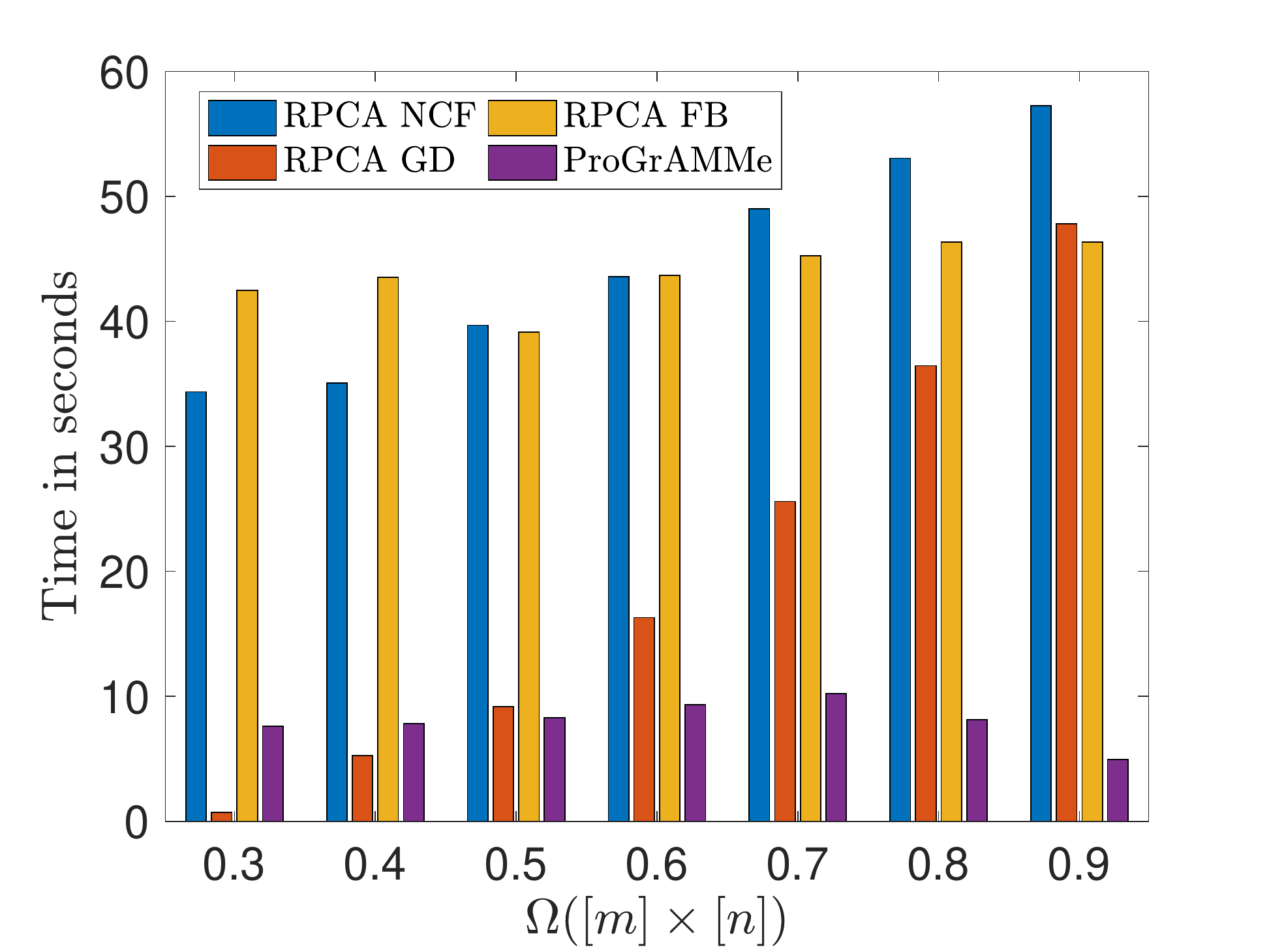} } \\[-2mm]
%%%%%%%%%\
%\vspace{-1ex}
\caption{{Quantitative comparison between different algorithms on Stuttgart {\tt Basic} sequence for different levels of partially observed/missing data case with respect to the $d_{\epsilon}(X,Y)$ metric.~In Figure (h), the bar diagram shows the execution time of different algorithms for different subsample $\Omega$.~ProGrAMMe-$1$ has the least average execution time in all scenarios.}}\label{Fig:missng_data_bg}
\end{figure}

%\begin{figure}[!ht]
%\centering
%\begin{minipage}{0.33\textwidth}
%    \centering
%    \includegraphics[width=\linewidth]{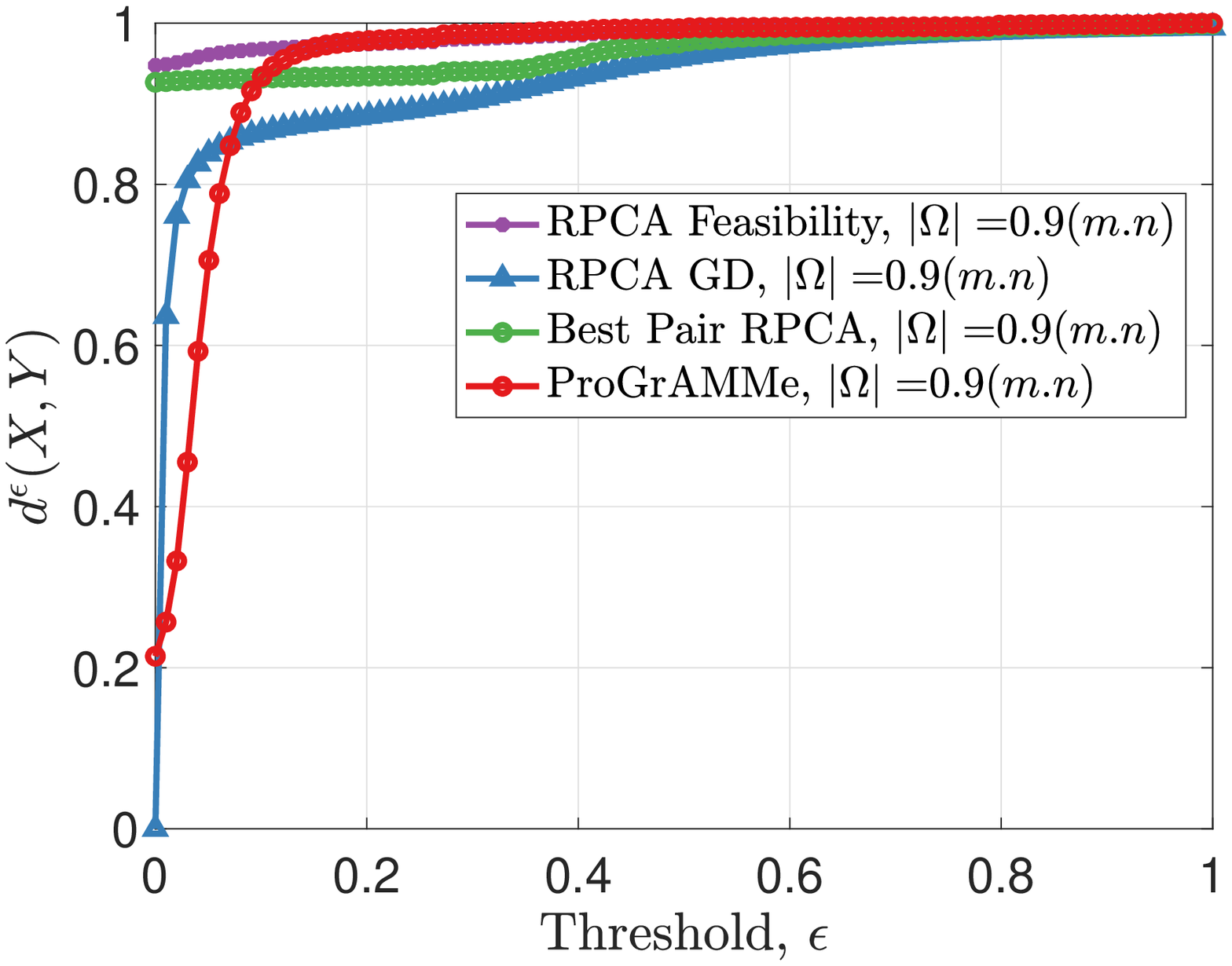}
%\end{minipage}%
%\begin{minipage}{0.33\textwidth}
%    \centering
%    \includegraphics[width=\linewidth]{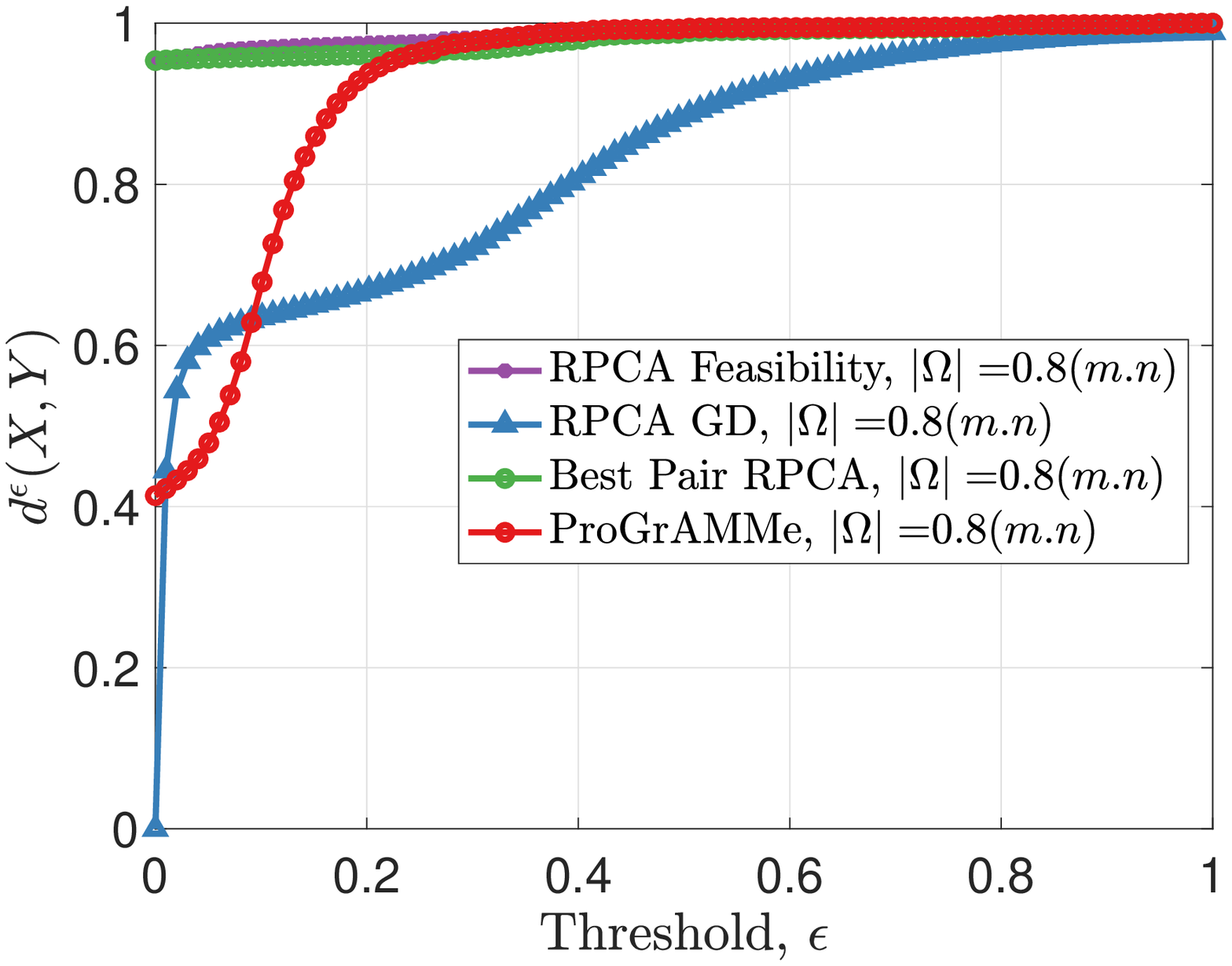}
%\end{minipage}
%\begin{minipage}{0.33\textwidth}
%    \centering
%    \includegraphics[width=\linewidth]{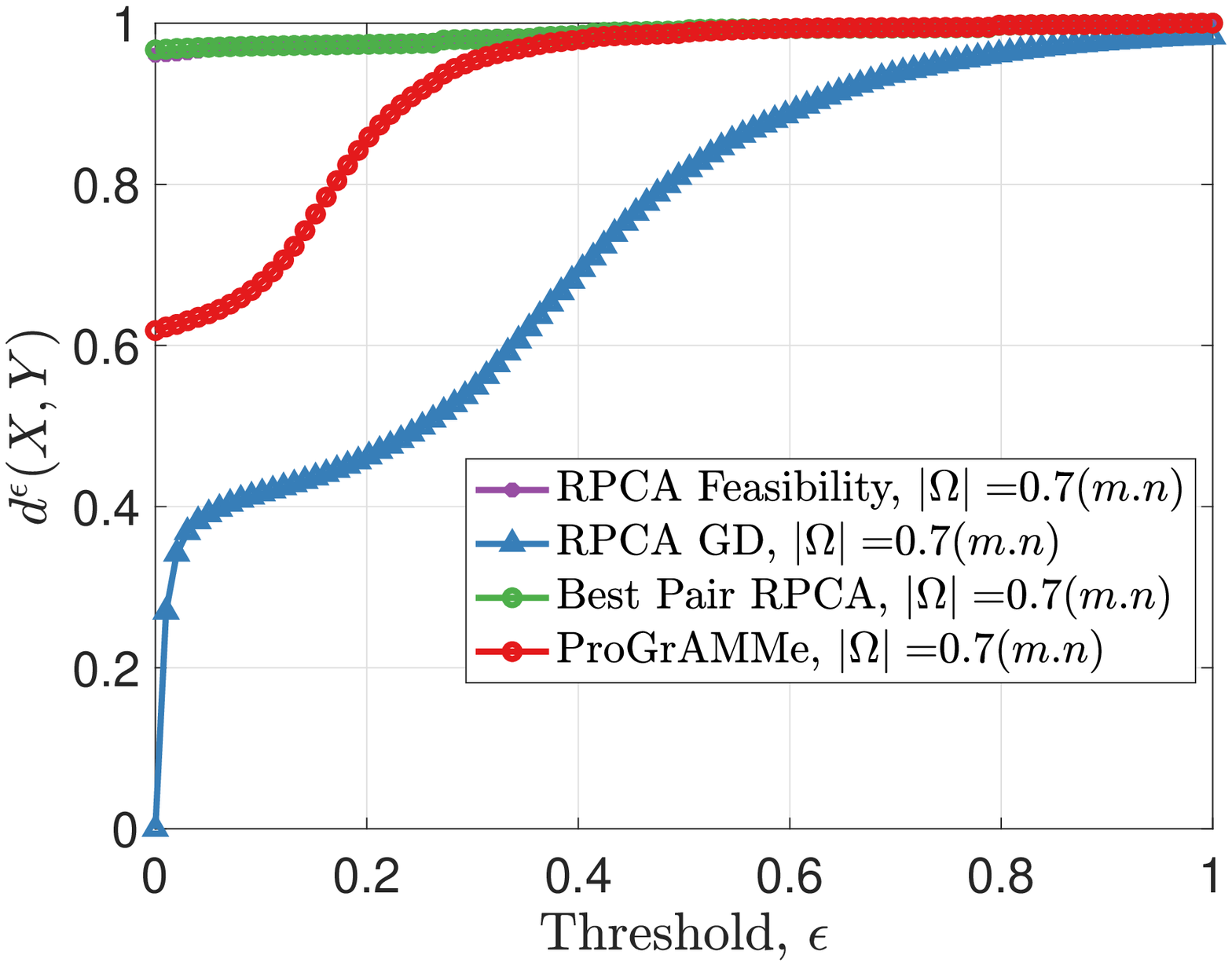}
%\end{minipage}\\
%\centering
%\begin{minipage}{0.33\textwidth}
%    \centering
%    \includegraphics[width=\linewidth]{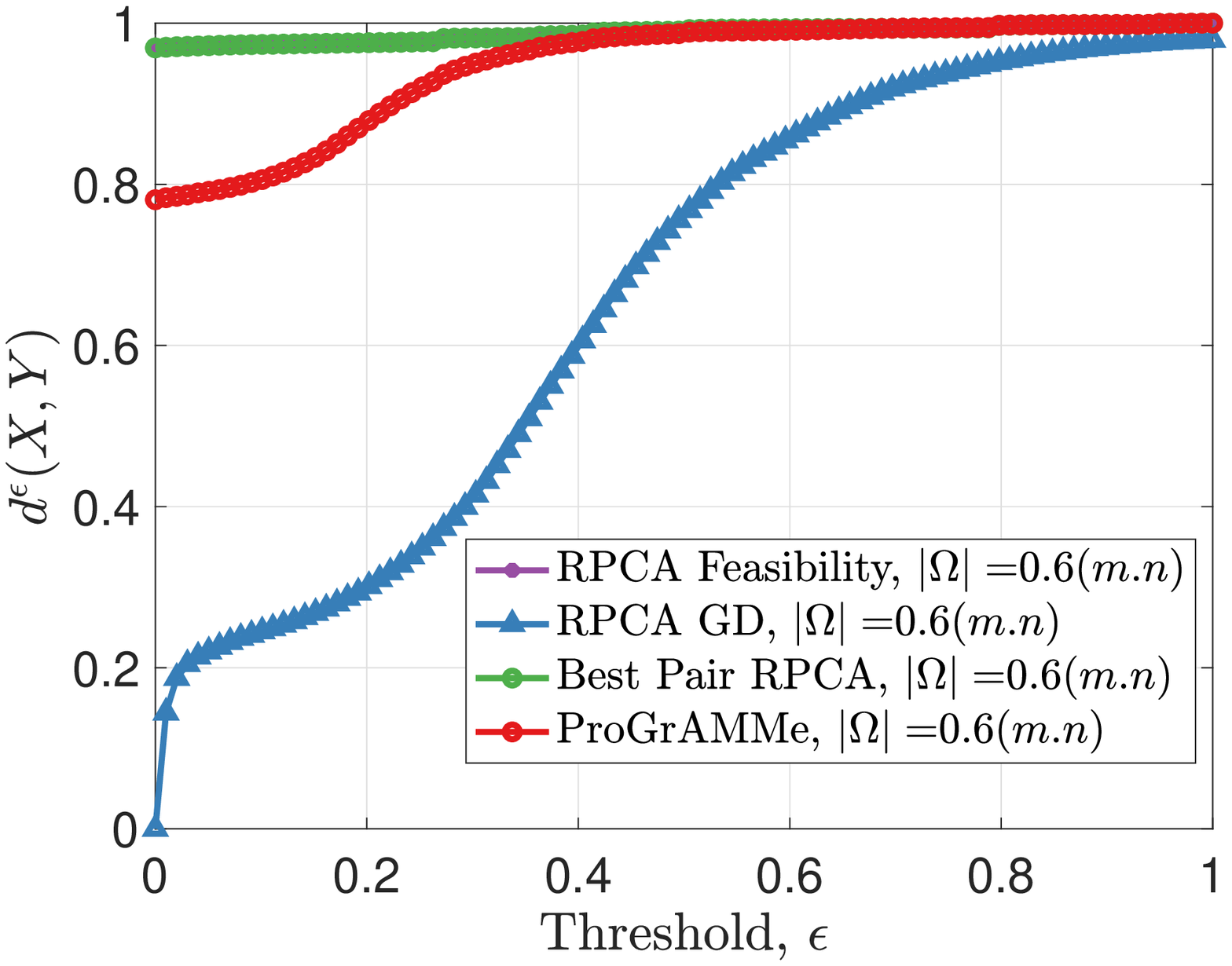}
%\end{minipage}%
%\begin{minipage}{0.33\textwidth}
%    \centering
%    \includegraphics[width=\linewidth]{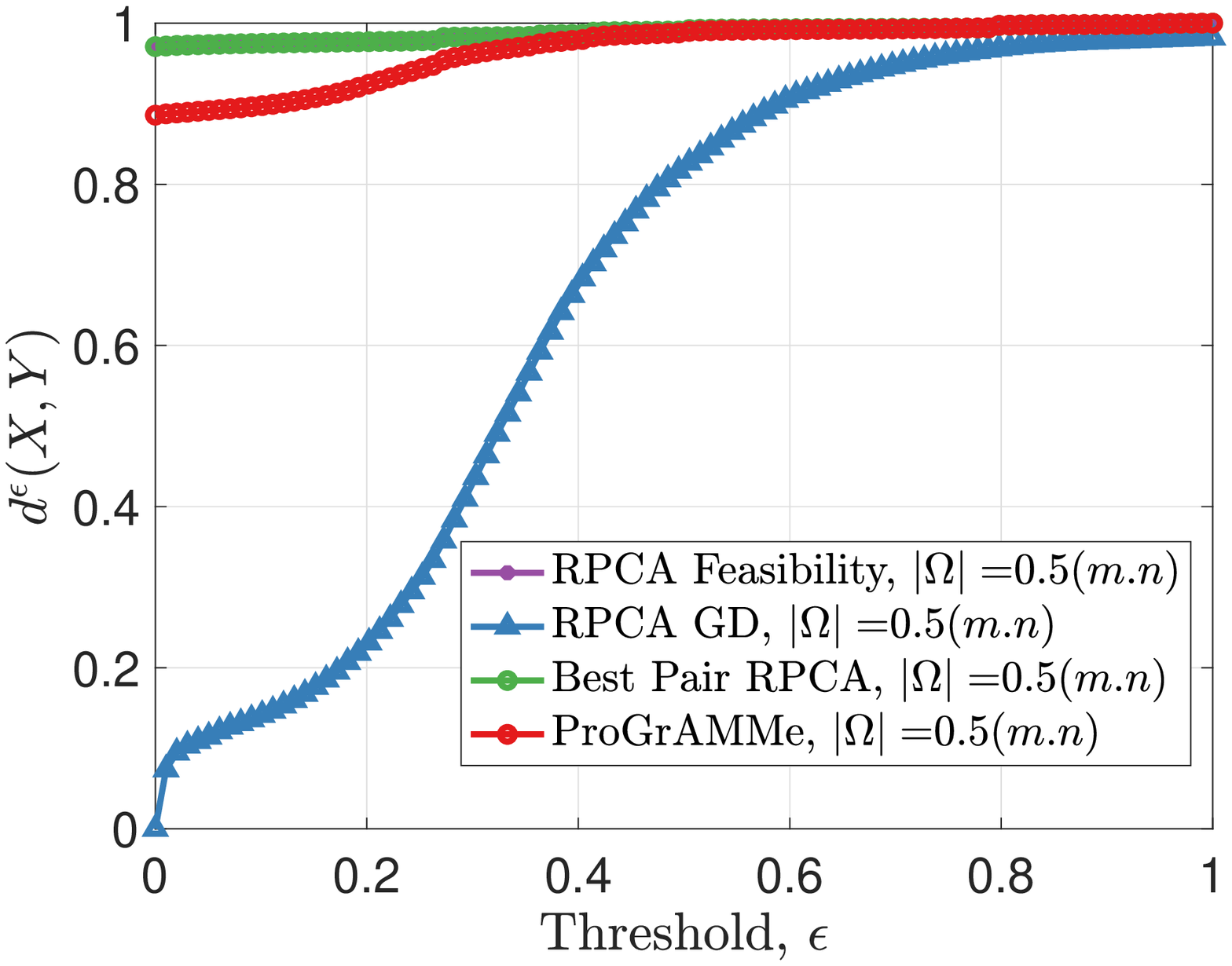}
%\end{minipage}
%\begin{minipage}{0.33\textwidth}
%    \centering
%    \includegraphics[width=\linewidth]{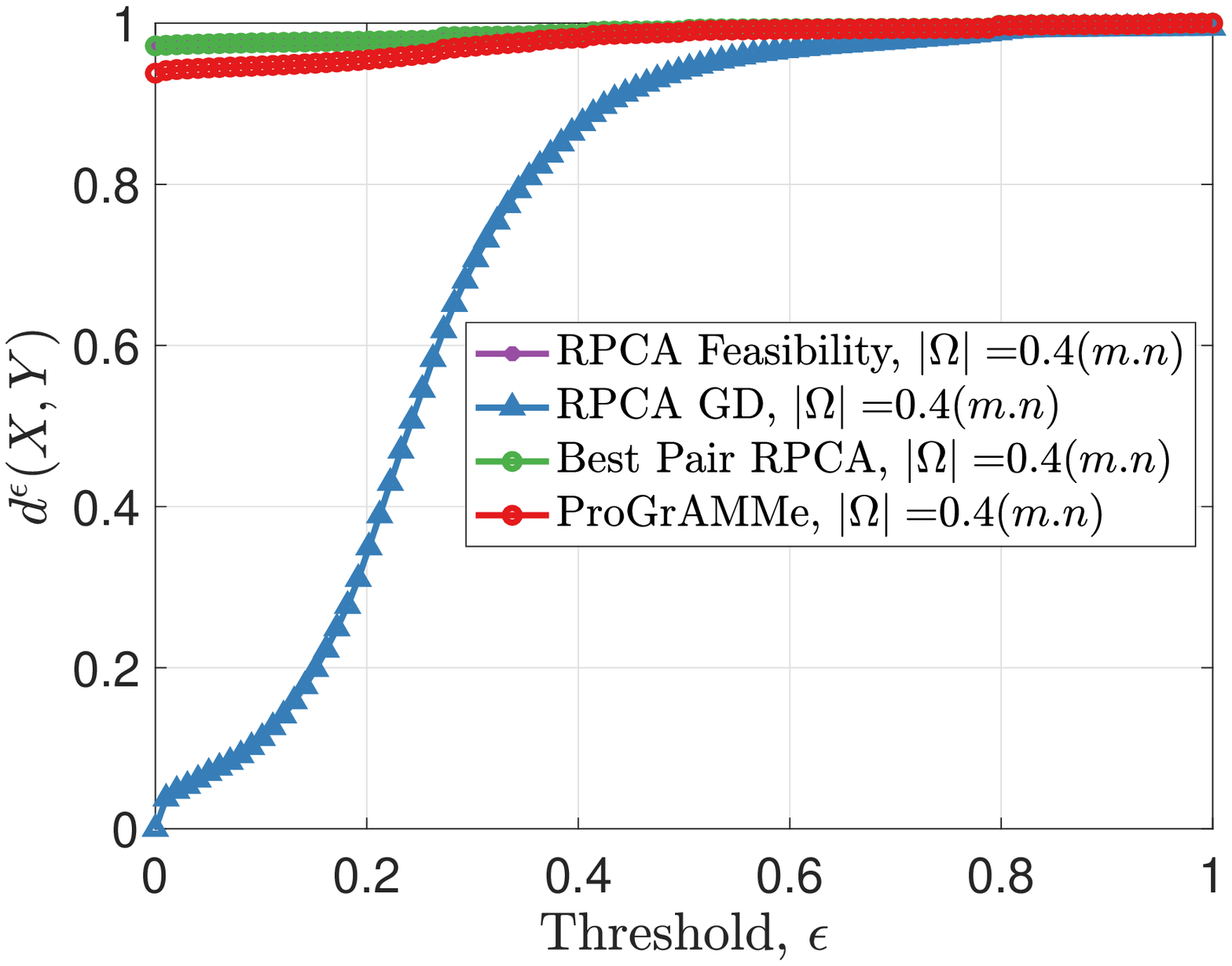}
%\end{minipage}\\
%\centering
%\begin{minipage}{0.38\textwidth}
%    \centering
%    \includegraphics[width=\linewidth]{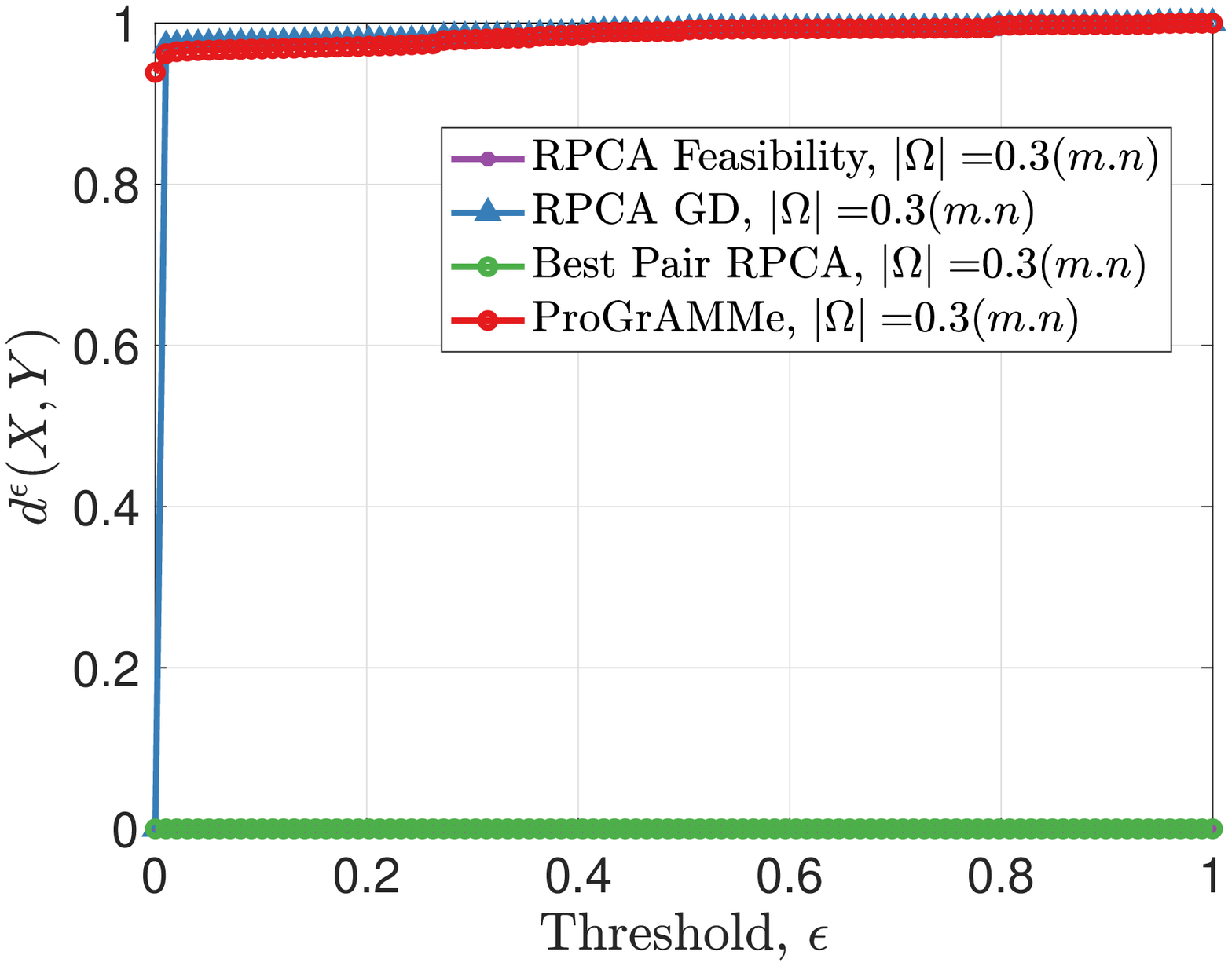}
%\end{minipage}%
%\begin{minipage}{0.42\textwidth}
%    \centering
%    \includegraphics[width=\linewidth]{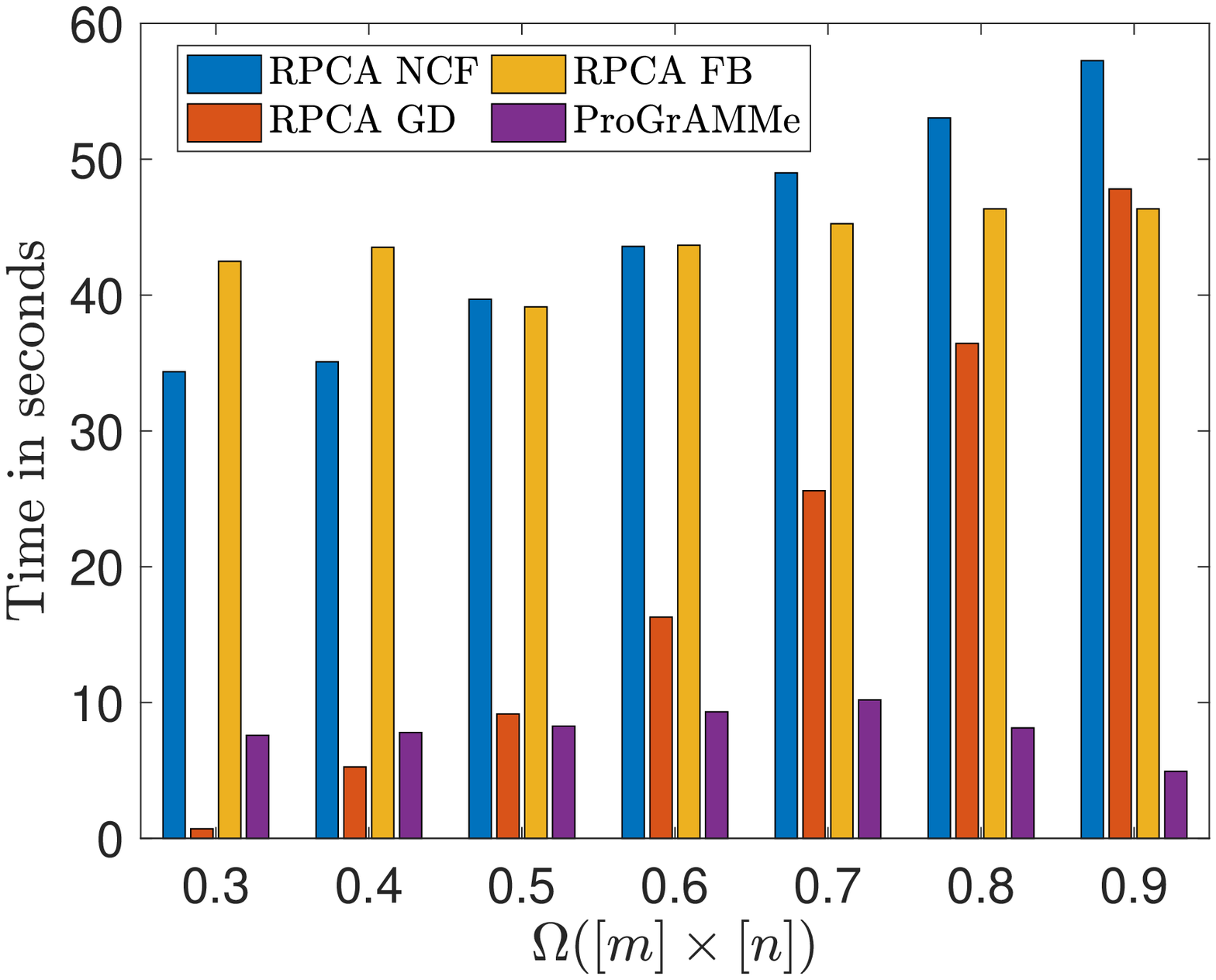}
%\end{minipage}
%%%%%%%%%%\
%\vspace{-1ex}
%\caption{{Quantitative comparison between different algorithms on Stuttgart {\tt Basic} sequence for partially observed/missing data case with respect to the $d_{\epsilon}(X,Y)$ metric.~The last bar diagram shows the execution time of different algorithms for different subsample $\Omega$.~ProGrAMMe-$1$ has the least average execution time in all scenarios.}}\label{Fig:missng_data_bg}
%\end{figure}

\subsection{Background estimation--Additional qualitative and quantitative evaluation}\label{sec:appendix_BG}
We show the qualitative results on 7 other real-world video sequences from the CDNet 2014 and SBI datasets in Figure \ref{Fig:bg_2}.~In almost all sequences,  \program-$\epsilon$ performs consistently well compare to the other state-of-the-art methods.~We do not include iEALM or APG due to their higher execution time. 

In Figure \ref{fig:quant_bg}, we show two robust quantitative measures for the background estimation experiments on Stuttgart {\tt Basic} video: peak signal to noise ratio~(PSNR) and mean structural similarities index measure~(SSIM) \cite{mssim}. PSNR is defined as $10\log_{10}$ of the ratio of the peak signal energy to the mean square error~(MSE) between the processed video signal and the ground truth. Let $F(:,i)-\hat{X}(:,i)$ be the reconstructed vectorized foreground frame and $G(:,i)$ be the corresponding ground truth frame, then PSNR is defined as $10\log_{10}\frac{{\rm M}_I^2}{{\rm MSE}}$, where ${\rm MSE}= \frac{1}{mn}\|F(:,i)-\hat{X}(:,i)-G(:,i)\|^2$ and ${\rm M}_I=255$ is the maximum possible pixel value of the image, as the pixels are represented using 8 bits per sample. For a reconstructed image with 8 bits bit depth, the PSNR are between 30 and 50 dB, where the higher is the better as we minimize the MSE between images with respect the maximum signal value of the image. 

For both measures, we perceive the information how the high-intensity regions of the image are coming through the noise, and pay less attention to the low-intensity regions. We remove the noisy components from the recovered foreground, $F(:,i)-\hat{X}(:,i)$, by using the threshold $\epsilon'$, such that we set the components below $\epsilon'$ in $E$ to 0. In our experiments, we set $\epsilon'=10^{-4}$. To calculate the SSIM of each recovered foreground video frame, we consider an $11\times 11$ Gaussian window with standard deviation~($\sigma$) 1.5 and consider the corresponding ground truth as the reference image. Among the methods tested, \program-$\epsilon$ has the highest average SSIM~(or MSSIM).~To compare PSNR of recovered foreground frames, we use \program-$\epsilon$, GRASTA \cite{grasta}, recursive projected compressive sensing~(ReProCS)\cite{reprocs}, inexact ALM~(iEALM) \cite{LinChenMa}, and RPCA GD~\cite{RPCAgd}. iEALM has the highest average PSNR, 30.05 dB among all the methods, whereas \program-$\epsilon$ has albeit less, an average PSNR 29.45 dB. However, \program-$\epsilon$ needs an average 9.48 seconds to produce the results, compared to the average execution time of iEALM is 183 seconds. 

\subsection{Background estimation from partially observed/missing data---Quantitative evaluation}\label{sec:appendix-missingdataBG}
For background estimation on partially observed data we used the  $\epsilon$-proximity metric---$d_{\epsilon}(X,Y)$ proposed in \cite{duttahanzely} on Stuttgart {\tt Basic} video. The performance of RPCA nonconvex feasibility (RPCA feasibility or RPCA NCF) \cite{duttahanzely} with respect to $d_{\epsilon}(X,Y)$ stays stable for all subsample $\Omega$. The performance of the best pair RPCA (also known as RPCA forward-backward or RPCA FB) \cite{dutta2019best} is stable except for $\Omega=0.3$. The performance of RPCA GD \cite{RPCAgd} keeps downgrading as we decrease the cardinality of the support $\Omega$. Surprisingly, the performance of \program-$1$ gets better for this experiment as we decrease $\Omega$. Furthermore, the average execution time for \program-$1$ is stable for different $\Omega$, and is around 8 seconds. While the next best average execution time 19.67 seconds is recorded for RPCA GD. The average execution time of NCF and best pair are 44 and 43 seconds, respectively.

\subsection*{Acknowledgement}
Aritra Dutta acknowledges being an affiliated researcher at the Pioneer Centre for AI, Denmark. Jingwei Liang acknowledges support from the Shanghai Municipal Science and Technology Major Project (2021SHZDZX0102) and the support from SJTU and Huawei ExploreX Funding (SD6040004/033). 

%%%%%%%%%%%%%%%%%%%%%%%%%%%%%%%%%%%%%%%%%%%%%%%%%%%%%%%%%%%%%%%%%%%%

\begin{small}
\bibliographystyle{plain}
\bibliography{references}
\end{small}

\end{document}